%% file: ex_article.tex
\documentclass[hidelinks,onefignum,onetabnum]{siamart251216}

\input{ex_shared}

\ifpdf
\hypersetup{
  pdftitle={A sGS-APALM for GNEPs in Banach spaces},
  pdfauthor={Defeng Sun, Hailing Wang, Wei Zhao}
}
\fi

\newsiamremark{assumption}{Assumption}

\begin{document}

\maketitle

\begin{abstract}
In this paper, we study a class of monotone generalized Nash equilibrium problems (GNEPs) with jointly linear constraints. The players' strategy spaces are real Hilbert spaces, while the joint constraint is formulated in a Banach space. To solve such problems, we propose a novel symmetric Gauss–Seidel (sGS) based alternating proximal augmented Lagrangian method (sGS-APALM) which incorporates newly designed quadratic surrogates. In contrast to existing regularization and ALM-type methods, the proposed method avoids solving coupled Nash equilibrium subproblems at each iteration and instead updates the players' strategies alternately by solving a sequence of unconstrained quadratic programs. Moreover, unlike many existing splitting-based methods, our global convergence analysis and convergence rate estimation require only monotonicity and Lipschitz continuity of the pseudo-gradient mapping, without imposing stronger assumptions such as strong monotonicity or cocoercivity. Finally, we apply the method to a class of risk-neutral PDE-constrained GNEPs with joint state constraints, and preliminary numerical results demonstrate its efficiency and effectiveness.
\end{abstract}

\begin{keywords}
 generalized Nash equilibrium problem, augmented Lagrangian methods, symmetric Gauss-Seidel decomposition, monotone operator, PDE-constrained equilibrium problems 
\end{keywords}

\begin{MSCcodes}
  49M27, 65K15, 91A10
\end{MSCcodes}

\section{Introduction}


In various practical applications, multiple decision-makers seek to minimize their individual objectives simultaneously while being affected by the strategies of others through both coupled cost functions and shared constraints. This scenario gives rise to a generalized Nash equilibrium problem (GNEP). In contrast to standard optimization problems, GNEPs contain a collection of player-specific problems where both the feasible region and the objective depend on the decisions of other players. This complicated structure makes computation challenging and motivates us to design efficient algorithms for GNEPs.

In this paper, we focus on a class of monotone GNEPs with $N$ players, which arise in, e.g., multi-objective optimal control \cite{MTA-15}, shape optimization \cite{PD-10}, and electricity dispatch \cite{WHW-25}. Player $\nu$ ($\nu=1,2,\ldots,N$) controls a variable $x_\nu\in X_\nu$, where $X_\nu$ is a real Hilbert space. We define $X:=X_1\times X_2\times\cdots\times X_N$ as the Hilbert space of all players' strategies. For $x=(x_1,x_2,\ldots,x_N)\in X$, let $x_{-\nu}$ denote the strategies of all players except the $\nu-$th one, and let $X_{-\nu}=\prod_{i\neq\nu}X_i$ represent the corresponding strategy space. The $\nu-$th player intends to solve the optimization problem
\begin{equation}\label{sGNEP}\tag{P}
\begin{aligned}
\min_{x_\nu\in X_\nu}\quad &J_\nu(x_\nu,x_{-\nu})+\phi_\nu(x_\nu) \\
\mbox{s.t.}\quad &A_1x_1+{A}_2x_2+\cdots+{A}_Nx_N\in K.
\end{aligned}
\end{equation}
Here, $J_\nu: X\to\mathbb{R}$ is continuously Fr\'echet differentiable, and $\phi_\nu: X_\nu\to\mathbb{R}\cup\{+\infty\}$ is proper, convex, and lower semicontinuous. Moreover, the set $K\subseteq Y$ is closed and convex, where $Y$ is a Banach space, and ${A}_\nu\in\mathcal{L}(X_\nu,Y)$, for $\nu=1,2,\ldots,N$, with $\mathcal{L}(X_\nu,Y)$ denoting the space of bounded linear operators that map from $X_\nu$ to $Y$. 
Let $P_J(x) := (D_{x_1}J_1(x), \ldots, D_{x_N}J_N(x))$ be the pseudo-gradient mapping, where $D_{x_\nu}J_\nu(x)$ denotes the partial Fr\'echet derivative of $J_\nu$ with respect to $x_\nu$. Throughout this paper, we assume that the mapping $P_J$ is monotone. 
We refer to this model as Problem (\ref{sGNEP}). 
This formulation covers both finite-dimensional and infinite-dimensional settings, as well as stochastic models where the underlying probability distributions are known.
Given this general formulation and the extensive literature on the GNEP (\ref{sGNEP}), we review the related work along two strands: (i) finite-dimensional deterministic/stochastic formulations, and (ii) infinite-dimensional extensions. 

In the finite-dimensional deterministic case, the problem reduces to the monotone GNEPs studied in \cite{FP-07}. Then, numerous algorithms and theoretical results are available for monotone GNEPs; see the surveys \cite{FC-10,AMK-14} for standard algorithmic frameworks, Chapter 12 of \cite{FP-10} and Chapter 11 of \cite{CP-21} for connections with quasi-variational inequalities, as well as some recent progress on exact penalty methods \cite{BP-22} and augmented Lagrangian methods \cite{CD-16,MTM-23}. Nevertheless, substantial computational difficulties remain, especially for large-scale problems with coupled constraints and nonsmooth structures.

Finite-dimensional stochastic Nash equilibrium problems (sNEPs) have also been extensively studied; see \cite{RTRJ-17,PSS-17,LSPS-20} for recent developments. Among these works, one line is to reformulate sNEPs as a two-stage stochastic variational inequality (SVI) \cite{WC-23,ZSX-19,RTS-19,RTRJ-17}. Under monotonicity of the underlying operator, the obtained SVI can be solved by the progressive hedging algorithm (PHA). From the numerical results reported in \cite{ZSX-19,RTS-19}, we note that PHA is computationally efficient when the number of scenarios is moderate. However, a direct extension of this framework to stochastic GNEPs of the form \eqref{sGNEP} is unattractive in practice: each PHA iteration would require solving one Nash equilibrium problem for each scenario, which is computationally expensive for the large-scale case, and the total computational cost grows rapidly with the number of scenarios.

Another major line of research for sNEPs is based on best-response (BR) schemes \cite{FGDJ-14,PSS-17,LSPS-20,SPS-16}. The main idea is to decouple the original sNEP into a series of classical optimization problems in a Gauss-Seidel or Jacobi manner. As further shown in \cite{LSPS-20}, if the best-response mapping is continuous and contractive, the resulting BR iteration converges to a Nash equilibrium at a linear rate. Although the BR-based methods behave efficiently on sNEPs, several difficulties arise when extending them to solve stochastic GNEPs. The presence of joint constraints fundamentally changes the structure of the problem, so that BR methods cannot be applied directly. As discussed in \cite{BP-22} and Section 11.3 of \cite{CP-21}, we can employ an exact penalty method to convert the GNEP into a penalized NEP. However, this introduces practical challenges. First, a suitable lower bound for the exact penalty parameter is unknown, which in practice leads to a double-loop iteration scheme. Moreover, the penalized NEP involves non-smooth penalty terms, which complicates the implementation of BR iteration. These issues motivate us to develop an easy-to-implement single-loop algorithm in this work, which can be used to address stochastic GNEPs.

In contrast to the extensive theory and algorithms available for finite-dimensional GNEPs, work on the infinite-dimensional Problem (\ref{sGNEP}) remains limited (as noted in the introduction of \cite{GHS-23}). Nevertheless, many applications of GNEPs are naturally posed in infinite-dimensional spaces, including shape optimization \cite{PD-10,CHPR-11}, multiple objective optimal control \cite{GHS-23,MT-13}, and differential games \cite{MDB-18}. Motivated by these, our aim is to propose a novel and efficient algorithm for the GNEP (\ref{sGNEP}), covering finite- and infinite-dimensional cases, as well as deterministic and stochastic settings.

In the deterministic and infinite-dimensional settings, several methods have been proposed. In \cite{MT-13}, a Moreau-Yosida regularization approach was developed for a class of multi-objective PDE-constrained optimal control problems, and was subsequently extended to monotone GNEPs in Banach spaces \cite{MTA-15}. The key idea of the regularization method is to penalize the joint constraints in the objective and then solve the resulting NEPs by some well-developed methods, such as semismooth Newton methods \cite{MR-09}. As shown in \cite{MTA-15}, the solution of the regularized problem converges to the exact solution of the original problem only if the regularization parameter tends to infinity or zero. However, from the aspect of computation, this results in extremely ill-conditioned NEP subproblems. Worse still, each NEP is coupled with all players and formulated in general Banach spaces, which makes its numerical solution non-trivial. Although some path-following strategies have been proposed to alleviate the ill-conditioning issue, the overall computational cost remains high. In addition to regularization methods, augmented Lagrangian methods (ALMs) have also been applied to solve monotone GNEPs \cite{CD-19,CV-19}. The ALMs are mainly featured by the idea of multiplier-penalties. Compared with regularization methods, ALM-type methods may generate less ill-conditioned subproblems. However, each iteration still requires solving a coupled NEP, and thus faces similar computational challenges as in regularization methods, especially when the penalty parameter becomes large.

To avoid the coupled and ill-conditioned NEP subproblems inherent in regularization and ALM-type methods, various splitting methods have been developed for monotone GNEPs under additional regularity conditions; see \cite{ECp-18,EC-21} for infinite-dimensional settings and \cite{YP-17,YP-19,GPS-22,RLZ-24} for finite-dimensional cases. These methods are mainly based on: (i) extensions of multi-block ADMM \cite{EC-21}, (ii) regularized Jacobi-ADMM \cite{ECp-18}, and (iii) forward-backward splitting frameworks \cite{YP-17,YP-19,GPS-22,RLZ-24}. For ADMM-type methods \cite{ECp-18,EC-21}, each iteration replaces the original NEP subproblem with a sequence of single-objective optimization problems. While this avoids solving the NEP subproblem directly, it typically requires adding sufficiently large proximal terms to guarantee convergence, which slows down practical progress. Moreover, for general models the resulting subproblems can be nonlinear (and sometimes non-smooth), so that inner iterative solvers are often required; hence each outer iteration can be computationally expensive and the overall complexity remains high. In contrast, forward–backward splitting methods \cite{YP-17,YP-19,GPS-22,RLZ-24} are relatively simple to implement, but their step sizes are typically quite restrictive, as the choice must consider both the objective-related Lipschitz information and the coupling constraints. Consequently, their practical convergence rate can be slow. Indeed, the numerical comparisons in \cite{EC-21} have shown that forward–backward schemes often converge more slowly in practice than ADMM-type methods. 

Apart from the aforementioned splitting-based methods, a symmetric Gauss-Seidel (sGS) based majorized-ALM method \cite{WWW-25} has recently been designed for deterministic monotone GNEPs under additional regularity assumptions. Each iteration only needs to solve a series of unconstrained quadratic optimization problems, thereby improving practical efficiency compared with existing splitting-based methods. Nevertheless, an important theoretical limitation still remains: most existing splitting-type frameworks (including \cite{WWW-25}) theoretically rely on additional restrictions on the pseudo-gradient operator, such as (partial) strong monotonicity \cite{EC-21,GPS-22} or cocoercivity \cite{ECp-18,YP-17,YP-19,RLZ-24,WWW-25}. These conditions can be restrictive in applications and are often difficult to verify in practice. While some existing works try to relax these additional assumptions by Tseng's forward-backward-forward scheme \cite{Tseng-00}, see e.g. \cite{ECp-18}, the resulting methods still require solving, at each iteration, a large operator equation that couples all players, which is impractical.

Compared with the deterministic literature on monotone GNEPs, results for the GNEP (\ref{sGNEP}) in stochastic and infinite-dimensional settings remain limited. A recent development is the extension of the Moreau-Yosida regularization method to a class of risk-neutral PDE-constrained GNEPs \cite{GHS-23}, which can be viewed as a concrete example of Problem (\ref{sGNEP}). Although this method exhibits impressive convergence properties, as in the deterministic case, it requires solving a sequence of increasingly ill-conditioned risk-neutral NEP subproblems. Moreover, the stochastic setting complicates the computation of the NEP subproblems.

In order to (i) avoid solving difficult NEP subproblems encountered in regularization methods, (ii) overcome the difficulties in extending BR schemes and PHA frameworks in stochastic settings, and (iii) relax the theoretical restrictions of existing splitting methods while keeping the scheme simple, we develop a novel, easy-to-implement single-loop alternating proximal ALM method for the monotone GNEP (\ref{sGNEP}). Our convergence analysis is established without the additional regularity assumptions that are typically required by existing splitting-based approaches. 

The algorithmic design proceeds in two steps. First, we reformulate (\ref{sGNEP}) as an equality-constrained GNEP. Second, we introduce a proximal ALM in which the usual augmented Lagrangian function is replaced by quadratic surrogates. A direct application would still require solving a coupled NEP at each iteration; to eliminate this burden, we incorporate a symmetric Gauss–Seidel (sGS) decomposition, yielding an sGS-based alternating proximal ALM that updates the players' strategies alternately and thus decouples the original NEP subproblems. This decoupling idea is inspired by the sGS techniques for large-scale convex composite conic programs \cite{XDK-19,LDK-17,LXD-21}. In the special case of a single player, our method reduces to a linearized proximal ALM for linearly constrained convex optimization problems.

The main features of the proposed method are summarized below. 
\begin{enumerate}
    \item[1.] By introducing a new quadratic surrogate model and embedding the sGS decomposition, each iteration avoids solving coupled and complicated NEPs, which are typically encountered in regularization methods \cite{MTA-15,MT-13,GHS-23} and ALM-type methods \cite{CD-19,CV-19}.
    \item[2.] The global convergence of our method requires the pseudo-gradient operator to be monotone. Compared with the assumptions imposed on the pseudo-gradient mapping in the existing splitting-based algorithms, such as (partial) strong monotonicity \cite{EC-21,GPS-22} or cocoercivity \cite{ECp-18,YP-17,YP-19,RLZ-24,WWW-25}, our assumption is weaker and easier to verify in applications.
\end{enumerate}
 In addition, each subproblem can be solved inexactly, which can further reduce per-iteration cost in practice. 

The remainder of the paper is organized as follows. In Section~\ref{Sec: Pre}, we introduce the problem formulation, recall the relevant notions of equilibrium and optimality, and state the standing assumptions. In Section \ref{Sec: Alg}, an sGS-based alternating proximal ALM method is derived in detail. Section \ref{Sec: The} is dedicated to analyzing the convergence properties and estimating the non-ergodic convergence rate. In Section \ref{Sec: Pra-ver}, we give a practical sGS-based alternating proximal ALM, which incorporates an adaptive parameter-selection strategy for the construction of quadratic surrogate models. In Section~\ref{Sec: App}, we apply the proposed method to a class of risk-neutral PDE-constrained GNEPs. Some numerical results are given to demonstrate the effectiveness and efficiency of the method. Finally, some concluding remarks are made in Section \ref{Sec: Con}.

\section{Preliminaries}\label{Sec: Pre}
 In this section, we first present a detailed formulation of Problem (\ref{sGNEP}) and review the definition of Nash equilibrium together with optimality conditions. We then show the existence of solutions and list the assumptions used in the subsequent theoretical analysis. Finally, we review some function spaces that are necessary for the risk-neutral PDE-constrained GNEPs considered in Section \ref{Sec: App}.

\subsection{Problem Formulation}
We first recall the definition of equilibrium for Problem (\ref{sGNEP}) \cite{MTA-15,CV-19}. For clarity, some notation is introduced as follows:
\begin{equation}\label{notations-feas}
\begin{aligned}
    &Ax:=\sum_{\nu=1}^NA_\nu x_\nu,\quad 
    \mathcal{X}:=\{x\in X\mid\phi_\nu(x_\nu)<\infty, \text{ for }\nu=1,2,\ldots,N\}.\\
   & \mathcal{F}:=\{x\in \mathcal{X}\mid Ax\in K\},\quad
    \mathcal{F}_\nu(x_{-\nu}):=\{x_\nu\in X_\nu\mid (x_\nu,x_{-\nu})\in\mathcal{F}\},
 \end{aligned}
\end{equation}
where $\mathcal{F}$ represents the feasible set of Problem (\ref{sGNEP}), and $\mathcal{F}_\nu(x_{-\nu})$ is the feasible set of the $\nu$-th player for some given $x_{-\nu}$.

\begin{definition}
    Let $\bar{x}\in\mathcal{F}$ be a feasible point. Then,
    \begin{enumerate}
        \item[(a).] $\bar{x}$ is a generalized Nash equilibrium of the GNEP (\ref{sGNEP}), if for each player $\nu$ and any $y_\nu\in\mathcal{F}_\nu(\bar{x}_{-\nu})$ there holds
    $J_\nu(\bar{x}_\nu,\bar{x}_{-\nu})+\phi_\nu(\bar{x}_\nu)\leq J_\nu(y_\nu,\bar{x}_{-\nu})+\phi_\nu({y}_\nu)$.
      \item[(b).] $\bar{x}$ is a normalized Nash equilibrium of the GNEP (\ref{sGNEP}), if there holds\\
           $\sum_{\nu=1}^N[J_\nu(\bar{x}_\nu,\bar{x}_{-\nu})+\phi_\nu(\bar{x}_\nu)]\leq \sum_{\nu=1}^N[J_\nu(y_\nu,\bar{x}_{-\nu})+\phi_\nu(y_\nu)],\quad \forall y\in \mathcal{F}$.
    \end{enumerate}
\end{definition}
Indeed, each normalized Nash equilibrium is also a generalized Nash equilibrium \cite{CV-19}, and thus our target in this work is to find a normalized Nash equilibrium of (\ref{sGNEP}). To this end, we first analyze the existence of normalized Nash equilibria. Recall that the Nikaido-Isoda function $\Psi(x,y)$ is
\begin{equation}\label{NI-def}
    \Psi(x,y)=\sum_{\nu=1}^N\left(J_\nu({x}_\nu,{x}_{-\nu})+\phi_\nu({x}_\nu)-J_\nu(y_\nu,{x}_{-\nu})-\phi_\nu(y_\nu)\right).
\end{equation}
Notice that $\bar{x}$ is a normalized Nash equilibrium of Problem (\ref{sGNEP}) if and only if $\bar{x}$ is a solution of the optimization problem $\max_{y\in\mathcal{F}} \Psi(\bar{x},y)$. 

Before proceeding with a further statement, we would like to introduce some necessary notations. For $x=(x_1,x_2,\ldots,x_N)\in X$, define the pseudo-gradient operator $P_J: X\to X^*$ and the subdifferential mapping $\partial\phi: X\rightrightarrows X^*$ by
\begin{equation}\label{def-gra}
    P_J(x)=\left(
    \begin{array}{c}
    D_{x_1}J_1(x_1,x_{-1})\\
    D_{x_2}J_2(x_2,x_{-2})\\
    \vdots\\
    D_{x_N}J_N(x_N,x_{-N})
    \end{array}
    \right),\quad \partial\phi(x)=\left(
    \begin{array}{c}
    \partial\phi_1(x_1)\\
    \partial\phi_2(x_2)\\
    \vdots\\
    \partial\phi_N(x_N)
    \end{array}
    \right),
\end{equation}
where $D_{x_\nu}$ represents the Fr\'echet derivative
 with respect to the variable $x_\nu$, and $\partial\phi_\nu$ is the subdifferential of $\phi_\nu$. 
It follows from the literature \cite{FP-07} that the considered GNEP (\ref{sGNEP}) is called monotone if the pseudo-gradient operator $P_J$ is monotone. The rest of this work mainly assumes that the problem is a monotone GNEP. 

\begin{theorem}\label{NE-Existence}
    Suppose that the feasible set $\mathcal{F}$ in \eqref{notations-feas} is nonempty, bounded, and closed in the Hilbert space $X$. Assume that the pseudo-gradient mapping $P_J$ is monotone and that the Nikaido-Isoda function $\Psi(x,y)$ in \eqref{NI-def} is weakly sequentially lower semicontinuous with respect to $x$. Then the set of normalized Nash equilibria of the GNEP \eqref{sGNEP} is nonempty.
\end{theorem}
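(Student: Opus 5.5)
The plan is to obtain a normalized Nash equilibrium as a solution of a Ky Fan type minimax inequality on $\mathcal{F}$, equipped with the weak topology of $X$. First I note the sign convention from \eqref{NI-def}. Since $\Psi(x,x)=0$, the point $\bar x\in\mathcal{F}$ is a normalized Nash equilibrium if and only if
\[
\Psi(\bar x,y)\le 0\qquad\text{for all } y\in\mathcal{F}.
\]
So it suffices to find such an $\bar x$. I will use Ky Fan's inequality in the following form. Let $C$ be a nonempty compact convex subset of a Hausdorff topological vector space, and let $f:C\times C\to\mathbb{R}$ satisfy three conditions: $f(x,x)\le 0$ for all $x$; $x\mapsto f(x,y)$ is lower semicontinuous for each $y$; and $y\mapsto f(x,y)$ is quasiconcave for each $x$. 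Then there exists $\bar x\in C$ with $f(\bar x,y)\le 0$ for all $y\in C$. I apply this with $f=\Psi$, $C=\mathcal{F}$, and the weak topology on $X$, which is Hausdorff and locally convex.

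\textbf{Step 1: $\mathcal{F}$ is convex and weakly compact.} The set $\mathcal{X}=\prod_\nu \operatorname{dom}\phi_\nu$ is convex because each $\phi_\nu$ is convex. Since $A$ is linear and $K$ is convex, $\mathcal{F}=\mathcal{X}\cap A^{-1}(K)$ is convex. It is closed and bounded by hypothesis. A closed convex set is weakly closed, and a bounded weakly closed subset of the Hilbert space $X$ is weakly compact. On $\mathcal{F}$ every $\phi_\nu(x_\nu)$ is finite, so $\Psi$ is real-valued on $\mathcal{F}\times\mathcal{F}$.

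\textbf{Step 2: concavity in $y$ from monotonicity.} This is the step where monotonicity of $P_J$ is used. Fix $\nu$ and $x_{-\nu}$. Take two points that differ only in the $\nu$-th block, say $(u_\nu,x_{-\nu})$ and $(v_\nu,x_{-\nu})$. Monotonicity of $P_J$ applied to this pair leaves only the $\nu$-th term:
\[
\langle D_{x_\nu}J_\nu(u_\nu,x_{-\nu})-D_{x_\nu}J_\nu(v_\nu,x_{-\nu}),\,u_\nu-v_\nu\rangle\ge 0 .
\]
Hence $D_{x_\nu}J_\nu(\cdot,x_{-\nu})$ is monotone. Since $J_\nu$ is continuously Fréchet differentiable, this implies that $J_\nu(\cdot,x_{-\nu})$ is convex; the standard argument restricts to a segment and uses the monotone derivative of the resulting scalar function. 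Therefore $y\mapsto -J_\nu(y_\nu,x_{-\nu})-\phi_\nu(y_\nu)$ is concave. Summing over $\nu$, $y\mapsto\Psi(x,y)$ is concave, and in particular quasiconcave, on $\mathcal{F}$.

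\textbf{Step 3: verify the remaining hypotheses and conclude.} We have $\Psi(x,x)=0$ identically. Weak sequential lower semicontinuity of $x\mapsto\Psi(x,y)$ is assumed. The main obstacle is that Ky Fan's theorem requires lower semicontinuity in the weak topology, not merely sequential lower semicontinuity. I would handle this in one of two ways. The first option is to note that $\mathcal{F}$ is bounded in a Hilbert space. If $X$ is separable, the weak topology on bounded sets is metrizable, so the sequential and topological notions coincide. The second option, which works in general, is to use the KKM form of the argument directly. For each $y\in\mathcal{F}$, the set $S(y):=\{x\in\mathcal{F}:\Psi(x,y)\le 0\}$ is convex and closed, hence weakly closed; the Eberlein–Šmulian theorem then makes it weakly compact. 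The KKM property of the map $y\mapsto S(y)$ follows from concavity in $y$ together with $\Psi(x,x)=0$. The KKM lemma then gives that any finite subfamily of the sets $S(y)$ has nonempty intersection. Weak compactness of $\mathcal{F}$ turns this finite intersection property into $\bigcap_{y\in\mathcal{F}}S(y)\neq\emptyset$.

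In the second option, the delicate point is the closedness of $S(y)$ under the weak sequential hypothesis. If convexity of $S(y)$ is not available, I would fall back on the first option: restrict to the closed separable subspace generated by the data, where the sequential notion suffices. Any point $\bar x$ in the intersection satisfies $\Psi(\bar x,y)\le 0$ for all $y\in\mathcal{F}$, so it is a normalized Nash equilibrium.
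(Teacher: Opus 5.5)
Your core argument is the paper's: $\mathcal{F}$ is convex, closed and bounded, hence weakly compact; monotonicity of $P_J$ restricted to the $\nu$-th block gives convexity of $J_\nu(\cdot,x_{-\nu})$, hence concavity of $\Psi(x,\cdot)$; and Ky Fan's inequality with $\Psi(x,x)=0$ yields $\bar x\in\mathcal{F}$ with $\Psi(\bar x,y)\le 0$ for all $y\in\mathcal{F}$. The paper simply cites Ky Fan and does not raise the sequential-versus-topological issue. You are right to raise it, but your treatment of it in Step 3 has a flaw.

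\textbf{Convexity of $S(y)$ is not justified.} The sets $S(y)=\{x\in\mathcal{F}:\Psi(x,y)\le 0\}$ have no reason to be convex. The function $x\mapsto\Psi(x,y)$ contains the terms $J_\nu(x)-J_\nu(y_\nu,x_{-\nu})$. Monotonicity of $P_J$ gives convexity only in the own variable, so for $N\ge 2$ these terms need not be convex in $x$. Hence the Mazur argument ``closed and convex, hence weakly closed'' is unavailable.

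\textbf{The separable fallback does not work as stated.} For nonseparable $X$, restricting to a separable subspace would only give $\Psi(\bar x,y)\le 0$ for $y$ in that subspace. It would not give the inequality for all $y\in\mathcal{F}$.

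\textbf{The repair uses Eberlein--\v{S}mulian directly.} Let $S\subseteq\mathcal{F}$ be weakly sequentially closed. Each sublevel set $\{x\in\mathcal{F}:\Psi(x,y)\le t\}$ is of this kind, by the weak sequential lower semicontinuity hypothesis.
\begin{enumerate}
\item $S$ is weakly sequentially compact. Any sequence in the bounded set $\mathcal{F}$ has a weakly convergent subsequence. Its limit lies in the weakly closed set $\mathcal{F}$, and then in $S$.
\item By Eberlein--\v{S}mulian, $S$ is weakly compact, hence weakly closed.
\end{enumerate}
So $\Psi(\cdot,y)$ is lower semicontinuous on $\mathcal{F}$ for the weak topology. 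This needs neither convexity nor separability, and Ky Fan applies exactly as the paper uses it.

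Your KKM route also works once this is in place. Its finite-intersection step needs only norm lower semicontinuity on finite-dimensional simplices, which follows from the sequential hypothesis. The passage to the full intersection then uses the weak closedness just established.
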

\begin{proof}
    Note that $\mathcal{F}$ is convex, closed and bounded in the Hilbert space $X$, and thus $\mathcal{F}$ is weakly compact. By the monotonicity of the pseudo-gradient operator, there holds that, for each $\nu$, $J_\nu(\cdot,x_{-\nu})$ is convex for each fixed $x_{-\nu}$. Hence, for each fixed $x\in\mathcal F$,
the Nikaido-Isoda function $\Psi(x,\cdot)$ is concave on $\mathcal F$. The rest of the proof follows from the Ky-Fan Theorem (see Thm 1.1 in \cite{AW-03}). 
\end{proof}
The assumption on weak lower semicontinuity of $\Psi(x,y)$ arises from the Ky-Fan Theorem, and this condition is readily verified in some practical problems \cite{MTA-15,HM-08}; see Section 2.1 of \cite{CV-19} for more discussions.

\begin{definition}\label{def-originalKKT}
    A tuple $(\bar{x},\bar{\lambda})\in X\times Y^*$ is called a variational KKT point of the GNEP (\ref{sGNEP}) if it satisfies
      $0\in P_J(\bar{x})+\partial\phi(\bar{x})+A^*\bar{\lambda}, \bar{\lambda}\in \mathcal{N}_{K}(A\bar{x}),  A\bar{x}\in K$,
    where $\mathcal{N}_{K}$ represents the normal cone of the set $K$, defined by
     $\mathcal{N}_{K}(y):=\{s\in Y^*\mid\langle s,\tilde{y}-y\rangle\leq 0,\quad \text{for all }\tilde{y}\in K\}$.
\end{definition}
The next theorem shows that, under some mild regularity conditions, variational KKT points are equivalent to normalized Nash equilibria.
\begin{theorem}\label{Equivalence-KKT-NE}
    If the GNEP (\ref{sGNEP}) has a monotone pseudo-gradient operator, the following equivalence holds
    \begin{enumerate}
        \item[(a).] If $(\bar{x},\bar{\lambda})\in X\times Y^*$ is a variational KKT point, then $\bar{x}$ is a normalized Nash equilibrium.
        \item[(b).] Conversely, let $\bar{x}$ be a normalized Nash equilibrium. If the linear operator $A\in\mathcal{L}(X,Y)$ has a closed range, and there exists a $\tilde{x}\in\mathcal{X}$ such that $
0\in\operatorname{int}_{Y}(A\tilde{x}-K)$,
then there exists a multiplier $\bar{\lambda}\in Y^*$ such that $(\bar{x},\bar{\lambda})$ is a variational KKT point.
    \end{enumerate}
\end{theorem}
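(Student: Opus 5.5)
Both directions rest on one reformulation. Since $P_J$ is monotone, each $J_\nu(\cdot,x_{-\nu})$ is convex for fixed $x_{-\nu}$ (as used in the proof of Theorem~\ref{NE-Existence}). Hence, for fixed $\bar x$, the map
\[
y\mapsto \theta_{\bar x}(y):=\sum_{\nu=1}^N\bigl[J_\nu(y_\nu,\bar x_{-\nu})+\phi_\nu(y_\nu)\bigr]
\]
is proper, convex and lower semicontinuous on $X$. Its subdifferential at $y=\bar x$ is $P_J(\bar x)+\partial\phi(\bar x)$, by the sum rule: the smooth part is continuous and separable, and $\phi$ is separable. Thus $\bar x$ is a normalized Nash equilibrium exactly when $\bar x$ minimizes the convex function $\theta_{\bar x}$ over $\mathcal F=\{y\in\mathcal X\mid Ay\in K\}$. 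Equivalently, $\bar x$ minimizes $\theta_{\bar x}(y)+\delta_K(Ay)$ over $X$.

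For (a), take the variational KKT point and choose $\xi\in\partial\phi(\bar x)$ with $P_J(\bar x)+\xi+A^*\bar\lambda=0$. For any $y\in\mathcal F$, the gradient inequality for the convex $J_\nu(\cdot,\bar x_{-\nu})$ and the subgradient inequality for $\phi_\nu$ give
\[
\theta_{\bar x}(y)-\theta_{\bar x}(\bar x)\ge \langle P_J(\bar x)+\xi,\,y-\bar x\rangle=-\langle\bar\lambda,\,Ay-A\bar x\rangle .
\]
The right-hand side is $\ge0$ because $\bar\lambda\in\mathcal N_K(A\bar x)$ and $Ay\in K$. So $\bar x$ is a normalized Nash equilibrium. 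No constraint qualification is needed in this direction.

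For (b), optimality of $\bar x$ for the convex composite problem gives
\[
0\in\partial\bigl(\theta_{\bar x}+\delta_K\circ A\bigr)(\bar x).
\]
The task is to split this subdifferential as $\partial\theta_{\bar x}(\bar x)+A^*\partial\delta_K(A\bar x)=P_J(\bar x)+\partial\phi(\bar x)+A^*\mathcal N_K(A\bar x)$. Once the split holds, choosing $\bar\lambda\in\mathcal N_K(A\bar x)$ gives the KKT system, and $A\bar x\in K$ holds by feasibility.

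The main obstacle is justifying this sum/chain rule in the Banach space $Y$. I would use a Robinson-type or Attouch–Brézis constraint qualification for $f(y)+g(Ay)$ with $f=\theta_{\bar x}$ and $g=\delta_K$. The sufficient condition is that $0$ lies in the interior (or the strong quasi-relative interior) of $\operatorname{dom} g-A\operatorname{dom} f=K-A\mathcal X$. The hypothesis $0\in\operatorname{int}_Y(A\tilde x-K)$ with $\tilde x\in\mathcal X$ gives exactly this, after a sign change, since $\operatorname{dom}\theta_{\bar x}=\mathcal X$. The closed-range assumption on $A$ is what the chosen version of the theorem (e.g., Zălinescu's or Bauschke–Combettes' Theorem 16.47-type results) may require to place the multiplier in $Y^*$ and to pass to $\operatorname{range}A$. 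If only a weaker version is available, I would instead argue directly by Lagrangian duality. The value function $v(w)=\inf\{\theta_{\bar x}(y)-\theta_{\bar x}(\bar x)\mid Ay+w\in K\}$ is convex, and the Slater point makes $v$ bounded above near $0$, hence continuous at $0$. Therefore $\partial v(0)\neq\emptyset$, and any element of $-\partial v(0)$ (with the appropriate sign convention) serves as $\bar\lambda$; checking that it satisfies the KKT conditions is routine.
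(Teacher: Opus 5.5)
Your argument is correct and is essentially the route the paper takes by deferring to Theorem~3.3 of \cite{ECp-18}: monotonicity of $P_J$ gives convexity of each $J_\nu(\cdot,\bar x_{-\nu})$, so a normalized equilibrium is exactly a minimizer of the convex function $\theta_{\bar x}$ over $\mathcal F$. Part (a) is then sufficiency of the convex KKT conditions, and part (b) is the subdifferential chain rule for $\theta_{\bar x}+\delta_K\circ A$ under the interiority hypothesis. One simplification: $0\in\operatorname{int}_Y(A\tilde x-K)$ is just the Slater condition $A\tilde x\in\operatorname{int}K$, so $\delta_K\circ A$ is continuous at $\tilde x\in\operatorname{dom}\theta_{\bar x}$ and the standard Banach-space sum and chain rules apply directly, with no role for the closed-range assumption and no need for Hilbert-space results such as Bauschke--Combettes, which would not cover the Banach space $Y$ anyway.
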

\begin{proof}
    This theorem can be proved in a similar way to Theorem 3.3 in \cite{ECp-18}.
\end{proof}
\subsection{Assumptions}
In this subsection, we specify the problem setting and state the necessary assumptions used in our theoretical analysis. 

First, we follow the assumptions and techniques used in \cite{CV-19} to handle the potential theoretical difficulties brought by the Banach-space setting. Assume that there exists a continuous and dense embedding map $e: Y\hookrightarrow H$ for some Hilbert space $H$, and there is a closed and convex set $\mathcal{K}\subseteq H$ with its pre-image $K=e^{-1}(\mathcal{K})$. Consequently, the joint constraints of Problem (\ref{sGNEP}), i.e. $Ax\in K$, can be interpreted as constraints in the Hilbert space $H$, that is, $(e\circ A)x\in\mathcal{K}$, while the feasible set $\mathcal{F}$ can be equivalently rewritten as
$\mathcal{F}:=\{x\in\mathcal{X}\mid (e\circ A)x\in\mathcal{K}\}$.

For convenience, we define the linear operators $\mathcal{A}: X\to H$ and $\mathcal{A}_\nu: X_\nu\to H$ by
\begin{equation}\label{def-mathcalA}
    \mathcal{A}x:=(e\circ A)x=\sum_{\nu=1}^N\mathcal{A}_{\nu}x_\nu,\quad \mathcal{A}_\nu x_\nu:=(e\circ A_\nu)x_\nu.
\end{equation}
In the remainder of the paper, we design the algorithm and present the theoretical analysis based on the equivalent form of Problem (\ref{sGNEP}). As explained in Remark \ref{remark-embedding}, the multiplier of Problem (\ref{sGNEP}) can be recovered through the embedding map $e$.

For further discussions, we recall some concepts for set-valued operators; see Chapter 4 of \cite{HP-17} for more details.
\begin{definition}
    Let $X$ be a Hilbert space and let $T: X\to 2^{X}$ be a set-valued operator. Then, $T$ is called:
    
(i). Monotone if 
   $\langle u-v,x-y\rangle\geq 0, \forall u\in T(x), v\in T(y)$.
   
(ii). Strongly monotone if there exists a constant 
$\rho>0$ such that
$\langle u-v,x-y\rangle\geq\rho\|x-y\|^2, \forall u\in T(x), v\in T(y)$.
   
(iii). $\alpha$-cocoercive if there exists a constant $\alpha>0$ such that $\langle u-v,x-y\rangle\geq\alpha\|u-v\|^2, \forall u\in T(x), v\in T(y)$,
in particular, when $\alpha=1$, the operator $T$ is called firmly nonexpansive.
\end{definition}

Then, we recall the concept of quasi-Fej\'er monotonicity for a sequence.
\begin{theorem}[Quasi-Fej\'er monotonicity]\label{thw}
Let $\{a_k\}$ be a sequence of nonnegative real numbers, for all $k$ there holds $a_{k+1} \le a_k + \varepsilon_k$, 
where $\{\varepsilon_k\}$ is a nonnegative and summable sequence of real numbers. Then, $\{a_k\}$ has a finite limit.
\end{theorem}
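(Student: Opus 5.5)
The plan is to reduce to the monotone convergence theorem for real sequences by removing the perturbations through a telescoping shift. Since $\{\varepsilon_k\}$ is nonnegative and summable, the tails
\[
s_k:=\sum_{j=k}^{\infty}\varepsilon_j
\]
are finite and nonnegative, nonincreasing in $k$, and satisfy $s_k\to 0$. We then set $b_k:=a_k+s_k$.

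First, the assumed inequality gives
\[
b_{k+1}=a_{k+1}+s_{k+1}\le a_k+\varepsilon_k+s_{k+1}=a_k+s_k=b_k,
\]
so $\{b_k\}$ is nonincreasing. Second, since $a_k\ge 0$ and $s_k\ge 0$, we have $b_k\ge 0$, so $\{b_k\}$ is bounded below. A nonincreasing sequence of reals that is bounded below has a finite limit $b^\ast\ge 0$.

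Finally, $a_k=b_k-s_k\to b^\ast-0=b^\ast$, so $\{a_k\}$ converges to a finite limit. As a byproduct we get the bound $a_k\le b_0=a_0+\sum_j\varepsilon_j$ for all $k$, which is often what is actually used downstream.

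There is no real obstacle here. The only point needing care is that $s_k$ is well defined, which uses summability, and that $s_k\to0$, which is the Cauchy/tail property of a convergent series.
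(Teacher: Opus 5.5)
The paper gives no proof of this statement; it recalls it as a standard fact and uses it later, e.g.\ for the sequence $\Phi(v^{k+1},v^k,\lambda^{k+1})$ in the global convergence proof, so there is no paper argument to compare against. Your proof is correct and complete, and it is the standard argument: the shifted sequence $b_k=a_k+\sum_{j\ge k}\varepsilon_j$ is nonincreasing and bounded below (this is exactly where $a_k\ge 0$ is needed), hence convergent, and since the tails $\sum_{j\ge k}\varepsilon_j$ vanish, $a_k$ has the same finite limit.
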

The next theorem is a standard consequence on weak convergence.
\begin{theorem}\label{th: weak-converge}
Let $\{u^k\}$ be a bounded sequence in the Hilbert space $U$ and suppose that
$\lim_{k\to\infty} \|u^k-u^\infty\|$ exists whenever $u^\infty$ is a weak limit point of $\{u^k\}$.
Then, the sequence $\{u^k\}$ is weakly convergent.
\end{theorem}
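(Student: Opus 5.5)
The plan is the standard Opial-type argument: the existence of $\lim_k\|u^k-u^\infty\|$ for every weak cluster point will force all weak cluster points to coincide. Boundedness then converts uniqueness of the cluster point into weak convergence of the whole sequence.

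\emph{Step 1 (at least one cluster point).} Since $\{u^k\}$ is bounded in the Hilbert space $U$, every subsequence has a further weakly convergent subsequence, because bounded sets in a Hilbert space are weakly sequentially compact. In particular, the set of weak limit points of $\{u^k\}$ is nonempty.

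\emph{Step 2 (uniqueness of the cluster point).} Suppose $\bar u$ and $\hat u$ are weak limit points, with $u^{k_i}\rightharpoonup \bar u$ and $u^{k_j}\rightharpoonup \hat u$. By hypothesis, the limits $\ell_1:=\lim_k\|u^k-\bar u\|$ and $\ell_2:=\lim_k\|u^k-\hat u\|$ both exist. Expand
\[
\|u^k-\hat u\|^2=\|u^k-\bar u\|^2+2\langle u^k-\bar u,\bar u-\hat u\rangle+\|\bar u-\hat u\|^2 .
\]
This shows that $\lim_k\langle u^k,\bar u-\hat u\rangle$ exists, since it is expressed through convergent quantities. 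Evaluate this limit along $\{k_i\}$ and along $\{k_j\}$; the two values must agree, so
\[
\langle \bar u,\bar u-\hat u\rangle=\langle \hat u,\bar u-\hat u\rangle ,
\]
that is, $\|\bar u-\hat u\|^2=0$. Hence $\bar u=\hat u$.

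\emph{Step 3 (conclusion).} Let $u^\infty$ be the unique weak limit point. If $u^k\not\rightharpoonup u^\infty$, then there exist some $w\in U$, some $\varepsilon>0$ and a subsequence with $|\langle u^{k_m}-u^\infty,w\rangle|\ge\varepsilon$ for all $m$. By Step 1 this subsequence has a further weakly convergent subsequence, and by Step 2 its limit must be $u^\infty$. This contradicts the $\varepsilon$-separation, so $u^k\rightharpoonup u^\infty$.

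The only step with real content is Step 2, where the limits of the inner products along the two subsequences must be compared. Step 1 and Step 3 are routine compactness and subsequence arguments.
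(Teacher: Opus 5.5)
Your proof is correct: the paper gives no proof of this statement and only calls it a standard consequence on weak convergence (it is Opial's lemma). Your three steps, namely weak sequential compactness of bounded sets, uniqueness of the weak cluster point via the expansion of $\|u^k-\hat u\|^2$ evaluated along two subsequences, and the final subsequence-of-a-subsequence argument, are exactly the standard proof the paper is alluding to.
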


Finally, we summarize the assumptions required for the subsequent analysis.
\begin{assumption}\label{ass1}
    \begin{enumerate}
     \item[(i).] For each $\nu=1,2,\ldots,N$, the space $X_\nu$ is a Hilbert space, and thus $X=X_1\times X_2\times\cdots\times X_N$ is also a Hilbert space. Moreover, there exists a normalized Nash equilibrium for Problem (\ref{sGNEP}).
        \item[(ii).] For each $\nu=1,2,\ldots,N$, the function $\phi_\nu:  X_\nu\to\mathbb{R}\cup\{+\infty\}$ is proper, convex, and lower semicontinuous, the functional $J_\nu(x)$ is continuously Fr\'echet differentiable with respect to $x=(x_{\nu},x_{-\nu})$.
        \item[(iii).] For each $\nu=1,2,\ldots,N$, the linear operator $A_\nu: X_\nu\to Y$ is bounded, and the range of the operator $A$ is closed. 
       \item[(iv).] There exists a point $\tilde{x}\in\mathcal{X}$ ($\mathcal{X}$ is defined in (\ref{notations-feas})), such that $0\in \operatorname{int}_{Y}(A\tilde{x}-K)$.
       \item[(v).] The pseudo-gradient operator $P_J$ defined by (\ref{def-gra}) is monotone and globally Lipschitz continuous with constant $L$ in $X$.
       \item[(vi).] There exists a continuous and dense embedding $e: Y\hookrightarrow H$ with $H$ being a Hilbert space, and a closed and convex set $\mathcal{K}\subseteq H$ such that $K=e^{-1}(\mathcal{K})$. 
    \end{enumerate}
\end{assumption}


\begin{remark}
In the remainder of this paper, we assume the existence of a normalized Nash equilibrium of (\ref{sGNEP}). A sufficient condition is provided in Theorem~\ref{NE-Existence}. Moreover, Theorem~\ref{Equivalence-KKT-NE} shows that, under Assumption~\ref{ass1}, finding a normalized Nash equilibrium is equivalent to computing a variational KKT point.
\end{remark}

\begin{remark}
Unlike many existing splitting-based methods, which typically impose stronger regularity conditions on the corresponding pseudo-gradient operators such as (partial) strong monotonicity \cite{EC-21,GPS-22} or cocoercivity \cite{ECp-18,YP-17,YP-19,RLZ-24,WWW-25}, our analysis relies solely on the monotonicity assumption given by Assumption \ref{ass1} (v).
\end{remark}

\subsection{Preliminaries for Applications}
To apply the designed algorithm to a class of risk-neutral PDE-constrained GNEPs in Section \ref{Sec: App}, we shall introduce the corresponding concepts and define some necessary function spaces.

The triple $(\Omega,\mathcal{E},\mathbb{P})$ represents a complete probability space, where $\Omega$ is the sample space, $\mathcal{E}$ is the Borel $\sigma$-algebra of $\Omega$ for a fixed topology on $\Omega$, and $\mathbb{P}: \mathcal{E}\to [0,1]$ is a probability measure. Let $V$ be a real Banach space endowed with the norm $\|\cdot\|_V$. For a random element $\xi: \Omega\to V$, we define the expectation with respect to $\mathbb{P}$ by $\mathbb{E}_{\mathbb{P}}[\xi]:=\int_{\Omega}\xi(\omega)d\mathbb{P}$.

For a given $1\leq p\leq\infty$ and a complete probability space $(\Omega,\mathcal{E},\mathbb{P})$, the Bochner space $L^p(\Omega;V)$ is defined by\\
    $L^p(\Omega;V)=\{u:\Omega\to V\mid u \text{ is strongly } \mathcal{E}-\text{measurable and } \|u\|_{L^p(\Omega;V)}<\infty\}$, 
where the norm is defined as $\|u\|_{L^p(\Omega;V)}:=\left(\int_\Omega\|u(\omega)\|^p_Vd\mathbb{P}(\omega)\right)^\frac{1}{p}$ for $1\leq p<\infty$, and $\|u\|_{L^p(\Omega;V)}:=\mathop{\text{ess}\sup}_{\omega\in\Omega}\|u(\omega)\|_V$ for $p=\infty$.

Furthermore, for $1\leq p,q<\infty$ such that $\frac{1}{p}+\frac{1}{q}=1$, there holds $(L^p(\Omega;V))^*\cong L^q(\Omega;V^*)$. Moreover, if $V$ is a reflexive Banach space and $1<p<\infty$, the Bochner space $L^p(\Omega;V)$ is reflexive and the duality pair satisfies
$\langle u,v\rangle=\mathbb{E}_{\mathbb{P}}[\langle u(\omega),v(\omega) \rangle_{V^*,V}]$.

For the case that $p=2$ and $V$ is a Hilbert space, the Bochner space $L^2(\Omega;V)$ is a Hilbert space endowed with inner product $\langle u,v\rangle:=\mathbb{E}_{\mathbb{P}}[\langle u(\omega),v(\omega) \rangle_{V}]$. For further properties of Bochner spaces, we refer to Chapter III of \cite{HPh-74}.

\section{An sGS-based Alternating Proximal ALM (sGS-APALM) for Problem (\ref{sGNEP})}\label{Sec: Alg}
In this section, we develop an sGS-based alternating proximal ALM (sGS-APALM) method for Problem (\ref{sGNEP}). To this end, we first rewrite the original problem in the following equivalent form:
\begin{equation}\label{sGNEP-Eq}\tag{Q}
    \begin{aligned}
        \min_{(x_\nu,z_1,z_{2,\nu})}\quad& J_\nu(x_\nu,x_{-\nu})+I_{\mathcal{K}}(z_1)+\phi_\nu(z_{2,\nu})\\
        \mbox{s.t.}\quad&\mathcal{A}x-z_1=0,\quad x_\nu-z_{2,\nu}=0,
    \end{aligned}
\end{equation}
where the operator $\mathcal{A}$ is defined by (\ref{def-mathcalA}). From Assumption~\ref{ass1}(vi), the linear constraints are formulated in a Hilbert space. Consequently, the equivalent reformulation is also posed in a Hilbert space; we refer to this reformulation as Problem~\eqref{sGNEP-Eq}. For simplicity, let us denote $z_2=(z_{2,1},z_{2,2},\ldots,z_{2,N})^\top$, and define
\begin{equation}\label{def-Bpsi}
    \begin{aligned}
       \mathcal{B}(z,x):=\left(\begin{array}{c}
        \mathcal{A}x-z_1 \\
        x-z_2
        \end{array}
       \right),\quad z=(z_1,z_2)^\top,\quad\psi(z):=I_{\mathcal{K}}(z_1)+\sum_{\nu=1}^N\phi_\nu(z_{2,\nu}).
    \end{aligned}
\end{equation}

\begin{remark} 
Note that Problem (\ref{sGNEP-Eq}) is posed in a Hilbert space. By the Riesz representation theorem, the dual space is isomorphic to the space itself; hence, without loss of generality, we carry out the subsequent analysis in the primal Hilbert space.
\end{remark}

\begin{definition}\label{def-VIKKT}
    A tuple $(\bar{x},\bar{z},\bar{\lambda})\in X\times (H\times X)\times (H\times X)$ is called a variational KKT point of the GNEP (\ref{sGNEP-Eq}) if it satisfies $0\in\left(\begin{array}{c}
            \partial\psi(\bar{z})\\ 
            P_J(\bar{x})
    \end{array}\right)+\mathcal{B}^*\bar{\lambda}$ and $0=\mathcal{B}(\bar{z},\bar{x})$,
    where $\partial\psi(\bar{z})$ denotes the subdifferential of $\psi(\cdot)$ at $\bar{z}$.
\end{definition}
Before presenting the designed algorithm, we define the function $\hat{J}_\nu(y_\nu;x,\tilde{x})$ ($\nu=1,2,\ldots,N$) as follows, which serves as a quadratic surrogate model of $J_\nu(y_\nu,x_{-\nu})$ at the given point $x=(x_\nu,x_{-\nu})$ and $\tilde{x}=(\tilde{x}_\nu,\tilde{x}_{-\nu})$:
\begin{equation}\label{quad-approx}
\hat{J}_\nu(y_\nu;x,\tilde{x}):=J_\nu(x)+\langle 2D_{x_\nu}J_\nu(x)-D_{x_\nu}J_\nu(\tilde{x}),y_\nu-x_\nu \rangle+\frac{1}{2}\|y_\nu-x_\nu\|^2_{\Sigma_\nu},
\end{equation}
where $\Sigma_\nu$ is self-adjoint and strongly monotone in $X_\nu$, and $\|y_\nu-x_\nu\|^2_{\Sigma_\nu}:=\langle \Sigma_\nu(y_\nu-x_\nu),y_\nu-x_\nu \rangle_{X_\nu}$.

Given iterates $x^k$ and $x^{k-1}$, we define the $\nu$-th quadratic surrogate model of the corresponding augmented Lagrangian function, as follows:
\begin{equation}\label{def-linearizedALM}
\hat{L}^\nu_\sigma(x,z;x^k,x^{k-1},\lambda):=\hat{J}_\nu(x_\nu;x^k,x^{k-1})+\psi(z)+\langle\lambda,\mathcal{B}(z,x)\rangle+\frac{\sigma}{2}\|\mathcal{B}(z,x)\|^2.
\end{equation}
Then, we can directly extend the proximal ALM to solve (\ref{sGNEP-Eq}). The method is iteratively defined by first solving the following NEP
\begin{equation}\label{subNEP}
		\min_{(x_\nu,z_1,z_{2,\nu})} \hat{L}^{\nu}_{\sigma}(x,z;x^k,x^{k-1},\lambda^k),\quad \text{for $\nu=1,2,\ldots,N$},
	 \end{equation}
 and then update the dual multiplier by $\lambda^{k+1}=\lambda^k+\tau\sigma\mathcal{B}(z^{k+1},x^{k+1})$.    
Note that each iteration requires solving the coupled NEP (\ref{subNEP}), which can be computationally expensive, especially when the number of players and the dimension of each decision variable are large. To avoid this difficult subproblem, we employ the symmetric Gauss-Seidel (sGS) decomposition technique to decouple it. Define $x_{\leq\nu-1}:=(x_1,x_2,\ldots,x_{\nu-1})^{\top}$ and $x_{\geq\nu+1}:=(x_{\nu+1},x_{\nu+2},\ldots,x_{N})^{\top}$. The proposed sGS-based alternating proximal ALM (sGS-APALM) is obtained in Algorithm \ref{ALM}.

In each iteration of the sGS-APALM (Algorithm \ref{ALM}), the coupled NEP subproblem (\ref{subNEP}) is replaced by $2N+1$ single-objective optimization problems. Moreover, we update each player's strategy alternately. In particular, the subproblems in both backward and forward sweeping procedures are unconstrained quadratic programs. Consequently, Algorithm \ref{ALM} can be implemented efficiently. This decoupling technique is mainly motivated by the sGS decomposition technique for large-scale convex composite conic programming problems; see \cite{XDK-19,LDK-17,LXD-21} for details. Besides, it is worth noting that in the special case of only one player, sGS-APALM reduces to a linearized proximal ALM.

\begin{remark}\label{remark-embedding}
    Comparing the KKT conditions of the original problem (Definition \ref{def-originalKKT}) with those of the equivalent problem (Definition \ref{def-VIKKT}), we find that the primal solution $x$ coincides, whereas the multiplier for the original problem can be obtained by $e^*(\lambda_1)$ with $\lambda=(\lambda_1,\lambda_2)$ being the optimal multiplier of the equivalent problem.    
\end{remark}
\begin{remark}
Unlike standard quadratic surrogates (e.g., the majorization models in \cite{LXD-21}) that rely only on $D_{x_\nu}J_\nu(x^k)$, our surrogate \eqref{quad-approx} is more involved. However, this construction allows us to prove convergence of sGS-APALM (Algorithm \ref{ALM}) without imposing stronger regularity conditions on the pseudo-gradient that are commonly required by existing splitting-based methods. Besides, our idea is somewhat similar to the reflected forward-backward schemes for monotone inclusions \cite{BNZ-25,YMT-20}, but here it is for GNEPs, which have a fundamentally different structure. 
\end{remark}

\begin{algorithm}[ht]
		\caption{An sGS-based alternating proximal ALM (sGS-APALM) for GNEP (\ref{sGNEP-Eq})}
		\hspace*{0.02in} {\bf Step 1:} Choose the penalty parameter $\sigma>0$, and the dual step-size $\tau\in (0,2)$. Initialize $\lambda^0\in H\times X$, $z^0\in H\times X$, and $x^0\in X$, set $k=0$, and let $x^{-1}=x^0$. \\
		\hspace*{0.02in} {\bf Step 2a:} Backward Sweeping: For $\nu=N,N-1,\ldots,1$, update $\bar{x}_{\nu}^{k+1}$ by inexactly solving 
		\begin{equation}\label{back}
		\min_{x_{\nu}} \hat{L}^{\nu}_{\sigma}(x^k_{\leq\nu-1},x_{\nu},\bar{x}^{k+1}_{\geq\nu+1},z^k;x^k,x^{k-1},\lambda^k).
		\end{equation}
  \hspace*{0.02in} {\bf Step 2b:} Update $z^{k+1}$ by solving 
		\begin{equation}\label{up-z}
		\min_{z} \psi(z)+\langle\lambda^k,\mathcal{B}(z,\bar{x}^{k+1})\rangle+\frac{\sigma}{2}\|\mathcal{B}(z,\bar{x}^{k+1})\|^2.
		\end{equation}
  \hspace*{0.02in} {\bf Step 2c:} Forward Sweeping: For $\nu=1,2,\ldots,N$, update $x_{\nu}^{k+1}$ by inexactly solving 
		\begin{equation}\label{for}
		\min_{x_{\nu}} \hat{L}^{\nu}_{\sigma} (x^{k+1}_{\leq\nu-1},x_{\nu},\bar{x}^{k+1}_{\geq\nu+1},z^{k+1};x^k,x^{k-1},\lambda^k).
		\end{equation}
		\hspace*{0.02in} {\bf Step 3:} Update multiplier $\lambda^{k+1}$ by
		\begin{equation}\label{up-lam}
	\lambda^{k+1}=\lambda^k+\tau\sigma\mathcal{B}(z^{k+1},x^{k+1}).
		\end{equation}
		\hspace*{0.02in} {\bf Step 4:}
		 Set $k=k+1$ and return to \textbf{Step 2}.\\
   \label{ALM}
	\end{algorithm}

\section{Convergence Analysis}\label{Sec: The}
This section establishes convergence and estimates the non-ergodic convergence rate for sGS-APALM (Algorithm \ref{ALM}). It is worth noting that our analysis does not require additional regularity conditions which are typically imposed in existing splitting-based methods, such as (partial) strong monotonicity \cite{EC-21,GPS-22} or cocoercivity \cite{ECp-18,YP-17,YP-19,RLZ-24,WWW-25}.

\subsection{Preliminaries}
Before presenting the detailed analysis, we define the set-valued operator $F: (H\times X)\times X\times (H\times X)\rightrightarrows (H\times X)\times X\times  (H\times X)$, which is induced by the KKT conditions in Definition \ref{def-VIKKT}.
\begin{equation}\label{def-F}
F(z,x,\lambda):=\left(
    \begin{array}{cc}
          \left(\begin{array}{cc}
            \partial\psi(\cdot) & \\
             & P_J(\cdot)
        \end{array}\right) &\mathcal{B}^*\\
      -\mathcal{B} &
       \end{array}\right)\left(\begin{array}{c}
            z \\
            x\\
            \lambda
       \end{array}\right)=\left(
    \begin{array}{cc}
          \left(\begin{array}{c}
             \partial\psi(z) \\
             P_J(x)
        \end{array}\right) +\mathcal{B}^*\lambda\\
      -\mathcal{B}(z,x)
       \end{array}\right),
\end{equation}
where $\lambda=(\lambda_1,\lambda_2)^\top$, and $\lambda_1,\lambda_2$ are the multipliers that correspond to the constraints $\mathcal{A}x=z_1$ and $x=z_2$, respectively. Moreover, we denote $\lambda_2=(\lambda_{2,1},\lambda_{2,2},\ldots,\lambda_{2,N})^\top$, where $\lambda_{2,i}$ is the multiplier associated with the constraint $x_i=z_{2,i}$. Then, the KKT conditions of Problem (\ref{sGNEP-Eq}) can be compactly rewritten as $0\in F(z,x,\lambda)$.
\begin{lemma}\label{operatorF-prop}
    The operator $F$ is maximal monotone in $(H\times X)\times X\times (H\times X)$, and there holds
    \begin{equation}\label{ineq1}
    \begin{aligned}
        &\langle f_1-f_2,(z_1,x_1,\lambda_1)-(z_2,x_2,\lambda_2)\rangle\geq \langle P_J(x_1)-P_J(x_2),x_1-x_2 \rangle\geq 0,\\
         &\forall f_1\in F(z_1,x_1,\lambda_1), f_2\in F(z_2,x_2,\lambda_2).
         \end{aligned}
    \end{equation}
\end{lemma}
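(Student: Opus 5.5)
The plan is to split $F$ into two pieces and handle each separately. Write $F = T + S$, where
\[
T(z,x,\lambda) := \bigl(\partial\psi(z),\, P_J(x),\, 0\bigr)
\]
is the "diagonal" part and
\[
S(z,x,\lambda) := \bigl(\mathcal{B}^*\lambda,\, -\mathcal{B}(z,x)\bigr)
\]
is the linear coupling part. Here $\mathcal{B}^*\lambda$ occupies the $(z,x)$ slots.

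\textbf{Step 1: the inequality \eqref{ineq1}.} Take $f_i \in F(z_i,x_i,\lambda_i)$ with $f_i = (g_i + (\mathcal{B}^*\lambda_i)_z,\; P_J(x_i) + (\mathcal{B}^*\lambda_i)_x,\; -\mathcal{B}(z_i,x_i))$ and $g_i \in \partial\psi(z_i)$. Expanding the inner product gives three contributions:
\begin{itemize}
\item $\langle g_1-g_2, z_1-z_2\rangle$;
\item $\langle P_J(x_1)-P_J(x_2), x_1-x_2\rangle$;
\item the skew terms $\langle \lambda_1-\lambda_2, \mathcal{B}(z_1-z_2,x_1-x_2)\rangle - \langle \mathcal{B}(z_1-z_2,x_1-x_2), \lambda_1-\lambda_2\rangle$.
\end{itemize}
The skew terms cancel exactly because $\mathcal{B}$ is linear and bounded. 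This uses that $\mathcal{A}$ is bounded as a composition of $e$ and $A$, both bounded by Assumption~\ref{ass1}(iii),(vi). The first contribution is nonnegative since $\partial\psi$ is monotone, $\psi$ being proper convex. This yields the first inequality in \eqref{ineq1}, and the second follows from Assumption~\ref{ass1}(v).

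\textbf{Step 2: maximality.} $S$ is a bounded linear skew-adjoint operator, since $\langle S w, w\rangle = 0$ for all $w$. Hence $S$ is continuous, single-valued, monotone and everywhere defined, so it is maximal monotone. The operator $T$ is a product of three pieces:
\begin{itemize}
\item $\partial\psi$ is maximal monotone, since $\psi$ is proper, convex and lsc. Indeed $I_{\mathcal{K}}$ is lsc because $\mathcal{K}$ is closed and convex, each $\phi_\nu$ is lsc, and Rockafellar's theorem applies.
\item $P_J$ is monotone and continuous (Lipschitz), hence maximal monotone.
\item The zero map on the $\lambda$-component is maximal monotone.
\end{itemize}
A Cartesian product of maximal monotone operators is maximal monotone, so $T$ is maximal monotone.

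\textbf{Step 3: the sum.} Finally, I invoke the standard sum rule: if $T$ is maximal monotone and $S$ is monotone, continuous, single-valued with full domain, then $T+S$ is maximal monotone (Bauschke--Combettes, Cor.~25.5). Since $\operatorname{dom} S$ is the whole space, no constraint qualification is needed.

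The main obstacle is mainly bookkeeping: correctly identifying the adjoint $\mathcal{B}^*\lambda = (-\lambda_1, -\lambda_2;\ \mathcal{A}^*\lambda_1 + \lambda_2)$ so that the cross terms cancel. There is also the subtle point that the sum theorem requires the Hilbert-space setting, which is ensured by the reformulation in $H$.
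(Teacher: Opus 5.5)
Your proposal is correct and takes the same route as the paper. You split $F$ into the block-diagonal part $(\partial\psi, P_J, 0)$ and the bounded skew-adjoint coupling part, note that the cross terms cancel to give \eqref{ineq1}, and obtain maximality from the standard sum theorem with an everywhere-defined continuous monotone summand. Your write-up supplies details the paper leaves implicit: the explicit form of $\mathcal{B}^*$, the product rule for maximal monotone operators, and the lower semicontinuity of $\psi$.
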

\begin{proof}
 The inequality (\ref{ineq1}) follows directly from algebraic manipulations and the monotonicity of pseudo-gradient operator $P_J(\cdot)$. Since $\partial\psi$ is maximal monotone, $P_J(\cdot)$ is a single-valued continuous monotone operator, and the remaining linear part is bounded and skew-adjoint, the maximal monotonicity of $F$ holds from the standard sum theorem for maximal monotone operators.
\end{proof}
Next, we give the optimality conditions for (\ref{back}), (\ref{up-z}), and (\ref{for}).
\begin{lemma}\label{Lemma: sub}
    The update schemes in (\ref{back}), (\ref{up-z}), and (\ref{for}) are equivalent to the following systems: 
 \begin{equation}\label{back-KKT}
   \left\{
    \begin{aligned}
        \delta_N&=(2D_{x_N}J_{N}(x^k)-D_{x_N}J_{N}(x^{k-1}))+\mathcal{A}_N^*\lambda_1^k\\
        &+\sigma\mathcal{A}_N^*(-z_1^k+\mathcal{A}_1x_1^k+\cdots+\mathcal{A}_{N-1}x_{N-1}^{k}+\mathcal{A}_N\bar{x}_N^{k+1})\\
        &+\Sigma_N(\bar{x}^{k+1}_N-x_N^k)+\lambda_{2,N}^k+\sigma(\bar{x}^{k+1}_N-z^k_{2,N})\\
        \delta_{N-1}&=(2D_{x_{N-1}}J_{N-1}(x^k)-D_{x_{N-1}}J_{N-1}(x^{k-1}))+\mathcal{A}_{N-1}^*\lambda_1^k\\
        &+\sigma\mathcal{A}_{N-1}^*(-z_1^k+\mathcal{A}_1x_1^k+\cdots+\mathcal{A}_{N-1}\bar{x}_{N-1}^{k+1}+\mathcal{A}_N\bar{x}_N^{k+1})\\
        &+\Sigma_{N-1}(\bar{x}^{k+1}_{N-1}-x_{N-1}^k)+\lambda_{2,N-1}^k+\sigma(\bar{x}^{k+1}_{N-1}-z^k_{2,N-1})\\
        &\vdots\\
        \delta_1&=(2D_{x_{1}}J_{1}(x^k)-D_{x_{1}}J_{1}(x^{k-1}))+\mathcal{A}_{1}^*\lambda_1^k\\
        &+\sigma\mathcal{A}_{1}^*(-z_1^k+\mathcal{A}_1\bar{x}_1^{k+1}+\cdots+\mathcal{A}_{N-1}\bar{x}_{N-1}^{k+1}+\mathcal{A}_N\bar{x}_N^{k+1})\\
        &+\Sigma_{1}(\bar{x}^{k+1}_{1}-x_{1}^k)+\lambda_{2,1}^k+\sigma(\bar{x}^{k+1}_{1}-z^k_{2,1}),\\
    \end{aligned}
    \right.
    \end{equation}
    \begin{equation}\label{up-zKKT}
        0\in \partial\psi(z^{k+1})-\lambda^{k}-\sigma\left(\begin{array}{c}-z_1^{k+1}+\mathcal{A}_1\bar{x}_1^{k+1}+\cdots+\mathcal{A}_{N-1}\bar{x}_{N-1}^{k+1}+\mathcal{A}_N\bar{x}_N^{k+1}\\
        -z_2^{k+1}+\bar{x}^{k+1}
        \end{array}
        \right),
    \end{equation}
\begin{equation}\label{forKKT}
   \left\{
    \begin{aligned}
    \delta^\prime_1&=(2D_{x_{1}}J_{1}(x^k)-D_{x_{1}}J_{1}(x^{k-1}))+\mathcal{A}_{1}^*\lambda_1^k\\
        &+\sigma\mathcal{A}_{1}^*(-z_1^{k+1}+\mathcal{A}_1{x}_1^{k+1}+\mathcal{A}_{2}\bar{x}_{2}^{k+1}+\cdots+\mathcal{A}_N\bar{x}_N^{k+1})\\
        &+\Sigma_{1}({x}^{k+1}_{1}-x_{1}^k)+\lambda_{2,1}^k+\sigma(x^{k+1}_{1}-z^{k+1}_{2,1})\\
    \delta^\prime_2&=(2D_{x_{2}}J_{2}(x^k)-D_{x_{2}}J_{2}(x^{k-1}))+\mathcal{A}_{2}^*\lambda_1^k\\
        &+\sigma\mathcal{A}_{2}^*(-z_1^{k+1}+\mathcal{A}_1{x}_1^{k+1}+\mathcal{A}_2{x}_2^{k+1}+\cdots+\mathcal{A}_N\bar{x}_N^{k+1})\\
        &+\Sigma_{2}({x}^{k+1}_{2}-x_{2}^k)+\lambda_{2,2}^k+\sigma(x^{k+1}_{2}-z^{k+1}_{2,2}),\\
    &\vdots\\
        \delta^\prime_N&=(2D_{x_{N}}J_{N}(x^k)-D_{x_{N}}J_{N}(x^{k-1}))+\mathcal{A}_{N}^*\lambda_1^k\\
        &+\sigma\mathcal{A}_{N}^*(-z_1^{k+1}+\mathcal{A}_1{x}_1^{k+1}+\mathcal{A}_2{x}_2^{k+1}+\cdots+\mathcal{A}_N{x}_N^{k+1})\\
        &+\Sigma_{N}({x}^{k+1}_{N}-x_{N}^k)+\lambda_{2,N}^k+\sigma(x^{k+1}_{N}-z^{k+1}_{2,N}),\\
    \end{aligned}
    \right.
    \end{equation}
    where $\{\delta_i\}_{i=1}^N$ and $\{\delta_i^\prime\}_{i=1}^N$ are the residuals caused by solving each subproblem inexactly, with $\delta_i,\delta^\prime_i\in X_i$.
\end{lemma}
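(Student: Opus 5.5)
The plan is to derive each system as the first-order optimality condition of the corresponding subproblem, treating the inexactness as a residual in the gradient equation. All the objectives involved are convex, so the first-order conditions are necessary and sufficient. Each is a sum of a quadratic surrogate and a squared norm of an affine map, plus $\psi$ in the $z$-update.

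\textbf{Backward sweep.} Fix $\nu$ and consider \eqref{back}. The variable $z=z^k$ is fixed, so $\psi(z^k)$ is a constant. From \eqref{quad-approx} and \eqref{def-linearizedALM}, the objective as a function of $x_\nu$ is
\[
\langle 2D_{x_\nu}J_\nu(x^k)-D_{x_\nu}J_\nu(x^{k-1}),x_\nu-x_\nu^k\rangle+\tfrac12\|x_\nu-x_\nu^k\|_{\Sigma_\nu}^2+\langle\lambda^k,\mathcal{B}(z^k,x)\rangle+\tfrac{\sigma}{2}\|\mathcal{B}(z^k,x)\|^2,
\]
up to a constant, where $x=(x^k_{\le\nu-1},x_\nu,\bar x^{k+1}_{\ge\nu+1})$. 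Using \eqref{def-Bpsi}, the map $x_\nu\mapsto \mathcal{B}(z^k,x)$ is affine, with linear part $x_\nu\mapsto(\mathcal{A}_\nu x_\nu,\,E_\nu x_\nu)$, where $E_\nu$ is the canonical injection of $X_\nu$ into $X$. Its adjoint sends $(\lambda_1,\lambda_2)$ to $\mathcal{A}_\nu^*\lambda_1+\lambda_{2,\nu}$.

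Differentiating with the chain rule in the Hilbert space $X_\nu$ gives the gradient
\[
2D_{x_\nu}J_\nu(x^k)-D_{x_\nu}J_\nu(x^{k-1})+\Sigma_\nu(x_\nu-x_\nu^k)+\mathcal{A}_\nu^*\lambda_1^k+\lambda_{2,\nu}^k+\sigma\mathcal{A}_\nu^*\Big(\sum_i\mathcal{A}_ix_i-z_1^k\Big)+\sigma(x_\nu-z_{2,\nu}^k).
\]
Because $\Sigma_\nu$ is strongly monotone, this objective is a strongly convex quadratic, so it has a unique exact minimizer characterized by zero gradient. An inexact solution $\bar x_\nu^{k+1}$ is defined precisely by requiring the gradient at $\bar x_\nu^{k+1}$ to equal a residual $\delta_\nu\in X_\nu$. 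Evaluating at $\bar x^{k+1}_\nu$ in order $\nu=N,\dots,1$ produces the lines of \eqref{back-KKT}. The only care needed is in the summation pattern: the entries with index below $\nu$ are $x^k_i$, and those with index $\ge\nu$ are $\bar x^{k+1}_i$.

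\textbf{Forward sweep.} The same computation applies to \eqref{for}, now with $z^{k+1}$ in place of $z^k$. The entries with index below $\nu$ are $x^{k+1}_i$, the entry at $\nu$ is the variable itself, and the entries above $\nu$ are $\bar x^{k+1}_i$. This yields \eqref{forKKT} with residuals $\delta_\nu'$.

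\textbf{The $z$-update.} The objective in \eqref{up-z} is $\psi$ plus a smooth convex quadratic in $z$, and $\psi$ is proper, convex and lower semicontinuous. The linear part of $z\mapsto\mathcal{B}(z,\bar x^{k+1})$ is $z\mapsto -z$, so the smooth part has gradient $-\lambda^k-\sigma\mathcal{B}(z,\bar x^{k+1})$. Since that part is continuous and everywhere finite, the subdifferential sum rule holds. Therefore $z^{k+1}$ is a minimizer if and only if
\[
0\in\partial\psi(z^{k+1})-\lambda^k-\sigma\mathcal{B}(z^{k+1},\bar x^{k+1}),
\]
which written out componentwise is \eqref{up-zKKT}. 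This problem is separable and strongly convex, so it is solved exactly.

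\textbf{Main obstacle.} None of these steps is deep. The one point needing justification is the sum rule for \eqref{up-z}, which is valid because one summand is continuous. Beyond that, the work is bookkeeping: getting the adjoint of $\mathcal{B}$ with respect to $x_\nu$ right, and getting the mixture of old and new iterates inside $\mathcal{A}x$ right at each sweep stage.
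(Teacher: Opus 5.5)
Your proposal is correct and follows the paper's own argument. The paper notes that each subproblem is convex and appeals to Fermat's rule; you carry out the gradient computations and the sweep-dependent index bookkeeping explicitly, and you justify the subdifferential sum rule in the $z$-update by the continuity of the quadratic term, a step the paper leaves implicit.
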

\begin{proof}
    By Assumption \ref{ass1} (ii) and the definition of $\hat{L}_\sigma(x,z;x^k,x^{k-1},\lambda^k)$ given by (\ref{def-linearizedALM}), each subproblem of (\ref{back}), (\ref{up-z}) and (\ref{for}) is convex.  The rest of the proof follows from Fermat's rule.
\end{proof}
The next theorem rewrites the systems (\ref{back-KKT}), (\ref{up-zKKT}), and (\ref{forKKT}) in a compact form. For later use, we introduce the operators $\mathcal{M},\mathcal{D},\mathcal{U}$, where $\mathcal{D}$ and $\mathcal{U}$ represent the diagonal and upper triangular parts of $\mathcal{M}$, respectively,
\begin{equation}\label{operator-def}
\begin{aligned}
    &\mathcal{M}:=\left(
    \begin{array}{ccc}
        \sigma\rm{I} &  &-\sigma\mathcal{A}\\
         & \sigma\rm{I} &-\sigma \rm{I}\\
         -\sigma\mathcal{A}^* &-\sigma\rm{I} &(\sigma\rm{I}+\sigma\mathcal{A}^*\mathcal{A}+\Sigma) 
    \end{array}
    \right),\\
    &\mathcal{D}:=\operatorname{diag}(\sigma\rm{I},\sigma\rm{I},\sigma\rm{I}+\sigma\mathcal{A}_1^*\mathcal{A}_1+\Sigma_1,\ldots,\sigma{\rm{I}}+\sigma\mathcal{A}_N^*\mathcal{A}_N+\Sigma_N),\\
&\mathcal{U}:=\left(
\begin{array}{cccccc}
  0& 0& 0& 0&\cdots &-\sigma\mathcal{A}\\
   0 & 0& 0& 0&\cdots &-\sigma\rm{I}\\ 
    0 & 0&0& \sigma\mathcal{A}_1^*\mathcal{A}_2& \cdots &\sigma\mathcal{A}_1^*\mathcal{A}_N\\
     0 & 0 & 0 &0 & \cdots&\sigma\mathcal{A}_2^*\mathcal{A}_N\\
    \vdots & \vdots&  \vdots& \vdots & \ddots&\sigma\mathcal{A}_{N-1}^*\mathcal{A}_N\\
    0& 0& 0& 0& \cdots& 0
\end{array}\right),\\
\end{aligned}
\end{equation}
where $\Sigma$ is defined as $\Sigma:=\operatorname{diag}(\Sigma_1,\Sigma_2,\ldots,\Sigma_N)$.
\begin{lemma}
    The linear operators $\mathcal{M},\mathcal{D},\mathcal{U}$ defined in (\ref{operator-def}), which map from $ H\times X\times X$ to itself, are bounded. Moreover, the operator $\mathcal{M}$ is self-adjoint, and the operator $\mathcal{D}$ is self-adjoint and strongly monotone.
\end{lemma}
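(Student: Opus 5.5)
Boundedness is immediate. Every block entry of $\mathcal{M}$, $\mathcal{D}$ and $\mathcal{U}$ is built from the identity, $\sigma$, the operators $\mathcal{A}_\nu$ and their adjoints $\mathcal{A}_\nu^*$, and the $\Sigma_\nu$, using only finitely many sums, products and compositions. Each $\mathcal{A}_\nu=e\circ A_\nu$ is bounded, because $A_\nu$ is bounded by Assumption~\ref{ass1}(iii) and $e$ is a continuous embedding by Assumption~\ref{ass1}(vi). Hence $\mathcal{A}$ and all adjoints $\mathcal{A}_\nu^*$ are bounded. We will also take the $\Sigma_\nu$ to be bounded self-adjoint operators; this is implicit in \eqref{quad-approx}, since $\|\cdot\|_{\Sigma_\nu}$ is meant to be a quadratic form on all of $X_\nu$ (and for a self-adjoint operator defined everywhere, the Hellinger--Toeplitz theorem gives boundedness anyway). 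A block operator on the finite product $H\times X\times X$ with bounded entries is bounded, with norm at most the sum of the norms of its blocks. So $\mathcal{M}$, $\mathcal{D}$ and $\mathcal{U}$ are all bounded.

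For self-adjointness of $\mathcal{M}$, we use the product inner product $\langle (z_1,z_2,x),(z_1',z_2',x')\rangle=\langle z_1,z_1'\rangle_H+\langle z_2,z_2'\rangle_X+\langle x,x'\rangle_X$. A block operator is self-adjoint exactly when its $(i,j)$ block is the adjoint of its $(j,i)$ block. We check this entrywise. The $(1,3)$ block $-\sigma\mathcal{A}$ and the $(3,1)$ block $-\sigma\mathcal{A}^*$ are adjoint to each other. The $(2,3)$ and $(3,2)$ blocks are both $-\sigma\mathrm{I}$. The diagonal blocks are $\sigma\mathrm{I}$, $\sigma\mathrm{I}$ and $\sigma\mathrm{I}+\sigma\mathcal{A}^*\mathcal{A}+\Sigma$, and each is self-adjoint: $(\mathcal{A}^*\mathcal{A})^*=\mathcal{A}^*\mathcal{A}$, and $\Sigma=\operatorname{diag}(\Sigma_\nu)$ is self-adjoint because every $\Sigma_\nu$ is. Equivalently, one can verify $\langle \mathcal{M}w,w'\rangle=\langle w,\mathcal{M}w'\rangle$ by expanding both sides. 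One remark is worth recording for later use: $\langle\mathcal{M}w,w\rangle=\sigma\|z_1-\mathcal{A}x\|^2+\sigma\|z_2-x\|^2+\|x\|_\Sigma^2$. This shows $\mathcal{M}$ is also positive semidefinite, although the lemma does not ask for that.

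The operator $\mathcal{D}$ is block diagonal, so it is self-adjoint because each diagonal block is: $\sigma\mathrm{I}$ trivially, and $\sigma\mathrm{I}+\sigma\mathcal{A}_\nu^*\mathcal{A}_\nu+\Sigma_\nu$ as a sum of self-adjoint operators. For strong monotonicity, let $\rho_\nu>0$ be the strong monotonicity constant of $\Sigma_\nu$ from \eqref{quad-approx}. For $w=(z_1,z_2,x)$ we get
\[
\langle \mathcal{D}w,w\rangle=\sigma\|z_1\|^2+\sigma\|z_2\|^2+\sum_{\nu=1}^N\bigl(\sigma\|x_\nu\|^2+\sigma\|\mathcal{A}_\nu x_\nu\|^2+\langle\Sigma_\nu x_\nu,x_\nu\rangle\bigr)\ge \sigma\|w\|^2 .
\]
Since $\mathcal{D}$ is linear, this gives $\langle \mathcal{D}w-\mathcal{D}w',w-w'\rangle\ge\sigma\|w-w'\|^2$, so $\mathcal{D}$ is strongly monotone with modulus $\sigma$ (or $\sigma+\min_\nu\rho_\nu$). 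Note that the $\sigma\mathrm{I}$ blocks alone already give strong monotonicity.

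No step here is a real obstacle. The only point that needs care is bookkeeping: we must confirm that $\mathcal{M}$ and $\mathcal{D}$ act on the same ordered decomposition of the space, namely $H$, then $X$ for $z_2$, then $X_1,\dots,X_N$, so that the adjoint pairing of the off-diagonal blocks is read correctly.
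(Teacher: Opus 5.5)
Your argument is correct and is essentially the paper's proof written out in full: the paper only says the claims follow from the definitions of $\mathcal{M},\mathcal{D},\mathcal{U}$ together with Assumption~\ref{ass1}(iii), and you supply the blockwise checks. You also rightly invoke the continuity of $e$ from Assumption~\ref{ass1}(vi), which is needed for $\mathcal{A}_\nu=e\circ A_\nu$ to be bounded.

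One small slip: the parenthetical modulus $\sigma+\min_\nu\rho_\nu$ holds only on the $x$-components. For $w=(z_1,0,0)$ one has $\langle\mathcal{D}w,w\rangle=\sigma\|w\|^2$, so the strong monotonicity modulus of $\mathcal{D}$ on the whole space is exactly $\sigma$.
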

\begin{proof}
    The conclusions directly follow from the definitions of $\mathcal{M}$, $\mathcal{D}$, and $\mathcal{U}$, together with Assumption \ref{ass1} (iii).
\end{proof}
\begin{theorem}\label{iteration-VI}
    The iterate $(x^{k+1},z^{k+1})$ obtained by (\ref{back}), (\ref{up-z}) and (\ref{for}) satisfies the following variational system:
    \begin{equation}\label{sGS-sys1}
    \begin{aligned}
        d_k\in&\left(
        \begin{array}{c}
             \partial\psi(z^{k+1})  \\
             2P_J(x^k)-P_J(x^{k-1})
        \end{array}
\right)+\left(\mathcal{T}+\widetilde{\Sigma}\right)\left(\begin{array}{c}
             z^{k+1}-z^k  \\
             x^{k+1}-x^k 
        \end{array}\right)\\
        +&\mathcal{B}^*\lambda^k+\sigma\mathcal{B}^*\mathcal{B}(z^{k+1},x^{k+1}),
        \end{aligned}
    \end{equation}
    where the operator $\mathcal{T}:=\mathcal{U}\mathcal{D}^{-1}\mathcal{U}^*$, the residual $d_k$ and the operator $\widetilde{\Sigma}$ are defined as
\begin{equation}\label{sys1}
    d_k:=\delta^\prime+\mathcal{U}\mathcal{D}^{-1}(\delta^\prime-\delta),\quad \widetilde{\Sigma}:=\left(\begin{array}{cc}
            0 &  \\
             & \Sigma
        \end{array}\right),
\end{equation}
with $\delta:=(0,0,\delta_1,\delta_2,\ldots,\delta_N)$ and $\delta^\prime:=(0,0,\delta_1^\prime,\delta_2^\prime,\ldots,\delta_N^\prime)$ defined in Lemma \ref{Lemma: sub}.
\end{theorem}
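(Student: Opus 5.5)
This is the standard sGS decomposition argument (as in Li–Sun–Toh) adapted to the surrogate $\hat J_\nu$. We order the variable $w=(z_1,z_2,x_1,\ldots,x_N)$ and write $\mathcal{M}=\mathcal{D}+\mathcal{U}+\mathcal{U}^*$. A direct check from \eqref{operator-def} shows that $\mathcal{M}$ is exactly the Hessian of the quadratic part
\[
q(w)=\tfrac{\sigma}{2}\|\mathcal{B}(z,x)\|^2+\tfrac12\|x-x^k\|_\Sigma^2 .
\]
Here $\tfrac{\sigma}{2}\|\mathcal{A}x-z_1\|^2+\tfrac{\sigma}{2}\|x-z_2\|^2$ gives the blocks $\sigma I$, $-\sigma\mathcal{A}$, $-\sigma I$ and $\sigma(I+\mathcal{A}^*\mathcal{A})$, and $\Sigma$ adds to the $x$-diagonal. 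Its diagonal blocks are $\mathcal{D}$ and its strictly upper part is $\mathcal{U}$. The first step is this bookkeeping: sort the equations \eqref{back-KKT}, \eqref{up-zKKT} and \eqref{forKKT} into "linear part $+$ nonlinear part".

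Write $g:=(\,0,\;2P_J(x^k)-P_J(x^{k-1})+\lambda_2^k+\mathcal{A}^*\lambda_1^k\,)$ for the constant terms in the $x$-equations, together with the $-\lambda^k$ terms in the $z$-equation, i.e. the pieces of $\mathcal{B}^*\lambda^k$. Let $\bar w=(z^{k+1},\bar x^{k+1})$ and $w^{k+1}=(z^{k+1},x^{k+1})$. In the backward sweep, for player $\nu$, the coefficient multiplying $\bar x^{k+1}_{\geq\nu}$ and $z^k$ shows that \eqref{back-KKT} reads, blockwise in the $x$-components,
\[
\delta=g+\mathcal{B}^*\text{-linear terms}+(\mathcal{D}+\mathcal{U})\bar w'+\mathcal{U}^*w^k-\text{(shift by }\Sigma x^k),
\]
where $\bar w'$ uses $z^k$ in the $z$-slot and only the $x$-rows are taken. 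The forward sweep \eqref{forKKT} combined with the $z$-step \eqref{up-zKKT} reads
\[
\delta'\in(\partial\psi(z^{k+1}),0)+g+(\mathcal{D}+\mathcal{U}^*)w^{k+1}+\mathcal{U}\bar w-\ldots .
\]
The $z$-rows are handled by noting that the rows of $\mathcal{U}$ corresponding to $z$ involve only the $x$-part, which is $\bar x^{k+1}$ in \eqref{up-zKKT}. I would verify these two identities carefully, since they are the main mechanical content of the proof.

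Next comes the elimination step. The backward-sweep identity involves $\mathcal{U}^*$ acting on $w^k$, while the target expression involves $\mathcal{M}w^{k+1}$. Subtracting the backward relation from the forward relation yields
\[
\mathcal{D}(\bar w-w^{k+1})=\mathcal{U}^*(w^{k+1}-w^k)+(\delta-\delta')
\]
on the $x$-rows; on the $z$-rows both sides vanish because $z^{k+1}$ is shared and the $z$-diagonal terms cancel. Solving this for $\bar w-w^{k+1}=\mathcal{D}^{-1}[\mathcal{U}^*(w^{k+1}-w^k)+\delta-\delta']$ and substituting into the forward relation replaces $\mathcal{U}\bar w$ by
\[
\mathcal{U}w^{k+1}+\mathcal{U}\mathcal{D}^{-1}\mathcal{U}^*(w^{k+1}-w^k)+\mathcal{U}\mathcal{D}^{-1}(\delta-\delta').
\]
The forward relation then becomes
\[
\delta'+\mathcal{U}\mathcal{D}^{-1}(\delta'-\delta)\in(\partial\psi(z^{k+1}),\,2P_J(x^k)-P_J(x^{k-1}))+\mathcal{B}^*\lambda^k+\mathcal{M}w^{k+1}-\widetilde\Sigma w^k+\mathcal{T}(w^{k+1}-w^k).
\]
Finally, we recognise $\mathcal{M}w^{k+1}-\widetilde\Sigma w^k=\sigma\mathcal{B}^*\mathcal{B}(z^{k+1},x^{k+1})+\widetilde\Sigma(w^{k+1}-w^k)$, which gives \eqref{sGS-sys1} with $d_k$ as in \eqref{sys1}.

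The main obstacle is the sign and index bookkeeping in the first step. One has to ensure that the backward sweep produces precisely $\mathcal{U}^*w^k$ (using the $x^k_{\leq\nu-1}$ and $z^k$ entries) and that the $z$-rows of $\mathcal{U}$ and $\mathcal{U}^*$ match the use of $z^k$ versus $z^{k+1}$ and of $\bar x^{k+1}$ versus $x^{k+1}$. The backward sweep uses $z^k$ while the $z$-update uses $\bar x^{k+1}$. Here the zero first two rows and columns in the lower-left of $\mathcal{U}$, and the fact that $\delta,\delta'$ have zero $z$-components, make the $z$-row terms cancel exactly.
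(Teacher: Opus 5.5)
Your proposal is correct and is essentially the paper's proof: the paper simply invokes the block sGS decomposition theorem of Li--Sun--Toh (Theorem 1 of the cited reference). Your elimination of $\bar x^{k+1}$ via $\mathcal{D}(\bar w-w^{k+1})=\mathcal{U}^*(w^{k+1}-w^k)+\delta-\delta'$, followed by the identity $\mathcal{M}w^{k+1}-\widetilde{\Sigma}w^k=\sigma\mathcal{B}^*\mathcal{B}(z^{k+1},x^{k+1})+\widetilde{\Sigma}(w^{k+1}-w^k)$, is exactly that argument written out, with one blemish: drop the stray ``$\mathcal{B}^*$-linear terms'' in your backward identity, since $g$ already contains $\mathcal{B}^*\lambda^k$ and the $\sigma\mathcal{B}^*\mathcal{B}$ contributions are carried by $(\mathcal{D}+\mathcal{U})\bar w$ and $\mathcal{U}^*w^k$.
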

\begin{proof}
    The proof follows an argument similar to Theorem 1 in \cite{XDK-19}.
\end{proof}
\begin{remark}
    Comparing the equivalent form (\ref{sGS-sys1}) with the optimality conditions of the NEP subproblem (\ref{subNEP}), we note the additional term $\mathcal{T}\left(\begin{array}{c}
        z^{k+1}-z^k \\
        x^{k+1}-x^k
    \end{array}\right)$. This term is induced by the symmetric Gauss-Seidel (sGS) decomposition technique. Due to this term, we update each player's decision variable alternately, and thus avoid solving the original coupled NEP subproblem. 
\end{remark}
\subsection{Global Convergence}
In this section, we establish the global convergence property of Algorithm \ref{ALM}. 
For simplicity, let $v^k=(z^k,x^k)$. Combining the definition of $F(\cdot)$ in (\ref{def-F}) with Theorem \ref{iteration-VI} and the dual update scheme (\ref{up-lam}), there holds
\begin{equation}\label{KKT-residual}
    R_k\in F(z^{k+1},x^{k+1},\lambda^{k+1}),
\end{equation}
where the residual $R_k$ is given by
\begin{equation}\label{residual-def}
\begin{aligned}
    R_k&=\left(
    \begin{array}{c}
    d_k+\left(\mathcal{T}+\widetilde{\Sigma}\right)(v^k-v^{k+1})-(1-\frac{1}{\tau})\mathcal{B}^*(\lambda^{k}-\lambda^{k+1})\\
        \frac{\lambda^k-\lambda^{k+1}}{\tau\sigma}
        \end{array}
        \right)\\
    &+\left(\begin{array}{c}
             0 \\
            P_J(x^{k+1})-2P_J(x^k)+P_J(x^{k-1})\\
            0
        \end{array}\right)
        \end{aligned}
\end{equation}
The next Theorem establishes the relationship between the iterate and the KKT point.
\begin{theorem}\label{Th-ineq}
    Suppose that Assumption \ref{ass1} holds, the linear operators $\Sigma_\nu$, $(\nu=1,2,\ldots,N)$ satisfy $\Sigma_\nu\succ 2L{\rm{I}}$ with $L$ being the global Lipschitz constant defined in Assumption \ref{ass1} (v), and let $\{(v^k,\lambda^k)\}$ be the sequence generated by Algorithm \ref{ALM} with $v^k:=(z^k,x^k)$. For any given variational KKT point $(v^*,\lambda^*)$ ($v^*:=(z^*,x^*)$) of the monotone GNEP (\ref{sGNEP-Eq}), let us define
    \begin{equation}\label{def-Phifun}
    \begin{aligned}
        \Phi(v^k,v^{k-1},\lambda^k):=&\|v^{k}-v^*\|^2_{\mathcal{T}+\widetilde{\Sigma}}+\frac{1}{\tau\sigma}\|\lambda^k-\lambda^*\|^2\\
        &-2\langle P_J(x^{k})-P_J(x^{k-1}),x^{k}-x^*\rangle+L\|x^k-x^{k-1}\|^2.
        \end{aligned}
    \end{equation}
     Then, there holds
    \begin{enumerate}
    \item[(a).] each iterate satisfies
   \begin{equation*}
       \begin{aligned}
       &\frac{1}{2}(\Phi(v^k,v^{k-1},\lambda^k)-\Phi(v^{k+1},v^k,\lambda^{k+1}))
       +\langle d_k,v^{k+1}-v^*\rangle\\
       \geq&\frac{1}{2}\|v^k-v^{k+1}\|^2_{\mathcal{T}+\widetilde{\Sigma}}-L\|x^{k+1}-x^k\|^2
       +\left(\frac{1}{\tau}-\frac{1}{2}\right)\frac{1}{\tau\sigma}\|\lambda^k-\lambda^{k+1}\|^2.
       \end{aligned}
   \end{equation*}
   \item[(b).] the sequence $\{\Phi(v^k,v^{k-1},\lambda^k)\}$ is nonnegative, and there holds
\begin{equation*}
    \Phi(v^k,v^{k-1},\lambda^k)\geq \|v^k-v^*\|^2_{\mathcal{T}+\frac{\widetilde{\Sigma}}{2}}+\frac{1}{\tau\sigma}\|\lambda^k-\lambda^*\|^2.
\end{equation*}
   \end{enumerate}
\end{theorem}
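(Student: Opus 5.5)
The plan for (a) is to pair the inclusion \eqref{KKT-residual}, $R_k\in F(z^{k+1},x^{k+1},\lambda^{k+1})$, with $0\in F(z^*,x^*,\lambda^*)$. We use the monotonicity inequality \eqref{ineq1} of Lemma~\ref{operatorF-prop} in its weaker form, keeping only that the pairing is nonnegative; equivalently, we apply monotonicity of $\partial\psi$ and of $P_J$ separately. This gives
\[
\langle R_k,(v^{k+1}-v^*,\lambda^{k+1}-\lambda^*)\rangle\ge \langle P_J(x^{k+1})-P_J(x^*),x^{k+1}-x^*\rangle\ge 0 .
\]
Substituting \eqref{residual-def}, the terms split into four groups.

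\emph{Proximal term.} The term $\langle(\mathcal{T}+\widetilde{\Sigma})(v^k-v^{k+1}),v^{k+1}-v^*\rangle$ is expanded by the three-point identity. It equals
\[
\tfrac12\|v^k-v^*\|^2_{\mathcal{T}+\widetilde{\Sigma}}-\tfrac12\|v^{k+1}-v^*\|^2_{\mathcal{T}+\widetilde{\Sigma}}-\tfrac12\|v^k-v^{k+1}\|^2_{\mathcal{T}+\widetilde{\Sigma}} .
\]

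\emph{Dual terms.} These are $\frac{1}{\tau\sigma}\langle\lambda^k-\lambda^{k+1},\lambda^{k+1}-\lambda^*\rangle$ and $-(1-\frac1\tau)\langle\lambda^k-\lambda^{k+1},\mathcal{B}(v^{k+1}-v^*)\rangle$. Since $\mathcal{B}(v^*)=0$ and $\lambda^{k+1}-\lambda^k=\tau\sigma\mathcal{B}(v^{k+1})$, the second equals $(1-\frac1\tau)\frac{1}{\tau\sigma}\|\lambda^k-\lambda^{k+1}\|^2$. Expanding the first by the three-point identity and combining yields the coefficient $(\frac1\tau-\frac12)\frac{1}{\tau\sigma}$. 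The residual $d_k$ appears as $\langle d_k,v^{k+1}-v^*\rangle$ and is moved to the left-hand side.

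\emph{Reflected gradient term.} This is the main obstacle. We must handle
\[
\langle P_J(x^{k+1})-2P_J(x^k)+P_J(x^{k-1}),x^{k+1}-x^*\rangle .
\]
Write it as $\langle g_{k+1}-g_k,x^{k+1}-x^*\rangle$ with $g_{k}:=P_J(x^{k})-P_J(x^{k-1})$. Then
\[
\langle g_{k+1}-g_k,x^{k+1}-x^*\rangle=\langle g_{k+1},x^{k+1}-x^*\rangle-\langle g_k,x^k-x^*\rangle-\langle g_k,x^{k+1}-x^k\rangle .
\]
The first two pieces telescope into the $-2\langle g_k,x^k-x^*\rangle$ part of $\Phi$ after multiplying by $2$ and halving. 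For the cross term, Lipschitz continuity and Young's inequality give
\[
|\langle g_k,x^{k+1}-x^k\rangle|\le \tfrac L2\|x^k-x^{k-1}\|^2+\tfrac L2\|x^{k+1}-x^k\|^2 .
\]
The $\|x^k-x^{k-1}\|^2$ part is absorbed by the $L\|x^k-x^{k-1}\|^2$ entry in $\Phi$ (divided by $2$). The $\|x^{k+1}-x^k\|^2$ part, together with the new $\frac L2\|x^{k+1}-x^k\|^2$ produced by $-\frac12\Phi(\cdot_{k+1})$, yields $-L\|x^{k+1}-x^k\|^2$ on the right. Collecting everything and rearranging gives (a). One must track carefully that the signs make the $P_J(x^{k+1})$ and $-2P_J(x^k)$ parts match the definition of $\Phi$.

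\emph{Part (b).} We bound the indefinite piece
\[
-2\langle P_J(x^k)-P_J(x^{k-1}),x^k-x^*\rangle\ge -2L\|x^k-x^{k-1}\|\|x^k-x^*\|\ge -L\|x^k-x^{k-1}\|^2-L\|x^k-x^*\|^2 .
\]
The first term cancels $L\|x^k-x^{k-1}\|^2$ in $\Phi$. The second is controlled by $\frac12\|x^k-x^*\|^2_{\Sigma}$, since $\Sigma\succ 2LI$ gives $\frac12\Sigma\succeq L I$. Hence
\[
\Phi\ge\|v^k-v^*\|^2_{\mathcal{T}+\widetilde\Sigma/2}+\tfrac{1}{\tau\sigma}\|\lambda^k-\lambda^*\|^2\ge 0 ,
\]
where the last inequality uses that $\mathcal{T}=\mathcal{U}\mathcal{D}^{-1}\mathcal{U}^*\succeq0$ because $\mathcal{D}\succ0$.
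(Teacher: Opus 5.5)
Your proposal is correct and matches the paper's proof of (a) step for step. In both, $R_k\in F(z^{k+1},x^{k+1},\lambda^{k+1})$ is paired with $0\in F(z^*,x^*,\lambda^*)$, the dual update and $\mathcal{B}(v^*)=0$ evaluate the $\mathcal{B}^*$ term, the three-point identities are applied, and the term $\langle P_J(x^k)-P_J(x^{k-1}),x^{k+1}-x^*\rangle$ is split at $x^k$ with a Lipschitz/Young bound on the cross term; part (b) uses the same estimate together with $\Sigma_\nu\succ 2L{\rm{I}}$.
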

\begin{proof}
(a). Combining Lemma \ref{operatorF-prop} with (\ref{KKT-residual}) and (\ref{residual-def}), there holds
\begin{equation}\label{ineq2}
\begin{aligned}
    &\langle d_k,v^{k+1}-v^*\rangle+\langle(\mathcal{T}+\widetilde{\Sigma})(v^k-v^{k+1}),v^{k+1}-v^*\rangle\\
    +&\langle P_J(x^{k+1})
    -2P_J(x^{k})+P_J(x^{k-1}),x^{k+1}-x^*\rangle\\
    -&\left(1-\frac{1}{\tau}\right)\langle\mathcal{B}^*(\lambda^{k}-\lambda^{k+1}),v^{k+1}-v^*\rangle+\frac{1}{\tau\sigma}\langle\lambda^{k}-\lambda^{k+1},\lambda^{k+1}-\lambda^*\rangle
    \geq 0
\end{aligned}
\end{equation}
From (\ref{up-lam}) and the feasibility of $v^*$, we have
    $\langle\mathcal{B}^*(\lambda^{k+1}-\lambda^k),v^{k+1}-v^*\rangle=\frac{1}{\tau\sigma}\|\lambda^{k+1}-\lambda^k\|^2$.
Besides, there also hold
        $\langle\lambda^{k}-\lambda^{k+1},\lambda^{k+1}-\lambda^*\rangle=\frac{1}{2}(\|\lambda^k-\lambda^*\|^2-\|\lambda^{k+1}-\lambda^*\|^2-\|\lambda^{k+1}-\lambda^k\|^2)$ and
        $\langle(\mathcal{T}+\widetilde{\Sigma})(v^k-v^{k+1}),v^{k+1}-v^*\rangle=\frac{1}{2}(\|v^k-v^*\|^2_{\mathcal{T}+\widetilde{\Sigma}}-\|v^{k+1}-v^*\|^2_{\mathcal{T}+\widetilde{\Sigma}}-\|v^{k+1}-v^k\|^2_{\mathcal{T}+\widetilde{\Sigma}})$.
Combining these with (\ref{ineq2}) and after simple manipulations, we have
\begin{equation}\label{ineq3}
       \begin{aligned}
       &\frac{1}{2}\left(\|v^{k}-v^*\|^2_{\mathcal{T}+\widetilde{\Sigma}}+\frac{1}{\tau\sigma}\|\lambda^k-\lambda^*\|^2\right)-\frac{1}{2}\left(\|v^{k+1}-v^*\|^2_{\mathcal{T}+\widetilde{\Sigma}}+\frac{1}{\tau\sigma}\|\lambda^{k+1}-\lambda^*\|^2\right)\\
       +&\langle d_k,v^{k+1}-v^*\rangle
       \geq\frac{1}{2}\|v^k-v^{k+1}\|^2_{\mathcal{T}+\widetilde{\Sigma}}+\left(\frac{1}{\tau}-\frac{1}{2}\right)\frac{1}{\tau\sigma}\|\lambda^k-\lambda^{k+1}\|^2\\
       -&\langle P_J(x^{k+1})-P_J(x^{k}),x^{k+1}-x^*\rangle
       +\langle P_J(x^{k})-P_J(x^{k-1}),x^{k+1}-x^*\rangle.
       \end{aligned}
   \end{equation}

 Then, we shall analyze the last term $\langle P_J(x^{k})-P_J(x^{k-1}),x^{k+1}-x^*\rangle$.
Note that
\begin{equation}\label{ineq4}
\begin{aligned}
    &\langle P_J(x^{k})-P_J(x^{k-1}),x^{k+1}-x^*\rangle\\
    =&\langle P_J(x^{k})-P_J(x^{k-1}),x^{k}-x^*\rangle+\langle P_J(x^{k})-P_J(x^{k-1}),x^{k+1}-x^k\rangle\\
   \geq&\langle P_J(x^{k})-P_J(x^{k-1}),x^{k}-x^*\rangle-L\|x^{k}-x^{k-1}\|\|x^{k+1}-x^k\|\\
   \geq&\langle P_J(x^{k})-P_J(x^{k-1}),x^{k}-x^*\rangle-\frac{L}{2}\|x^{k}-x^{k-1}\|^2-\frac{L}{2}\|x^{k+1}-x^k\|^2.
    \end{aligned}
\end{equation}
where the inequalities hold from the Cauchy-Schwarz inequality. 
Combining (\ref{ineq3}) with (\ref{ineq4}), the conclusion of (a) holds.

(b). From Assumption \ref{ass1} (v) and the Cauchy-Schwarz inequality, we obtain $\Phi(v^k,v^{k-1},\lambda^k)\geq\|v^{k}-v^*\|^2_{\mathcal{T}+\widetilde{\Sigma}}+\frac{1}{\tau\sigma}\|\lambda^k-\lambda^*\|^2-2L\|x^k-x^{k-1}\|\|x^{k}-x^*\|+L\|x^k-x^{k-1}\|^2$. Note that $-2L\|x^k-x^{k-1}\|\|x^{k}-x^*\|+L\|x^k-x^{k-1}\|^2\geq-L\|x^k-x^*\|^2\geq-\|x^k-x^*\|^2_{\frac{\widetilde{\Sigma}}{2}}$, where the last inequality holds from the assumption $\Sigma_\nu\succ 2L{\rm{I}}$.
\end{proof}
The next lemma shows that the generated sequence is uniformly bounded, which plays an important role in establishing global convergence property.
\begin{lemma}\label{bounded-iterate}
    Suppose that Assumption \ref{ass1} holds, and for each $\nu=1,2,\ldots,N$ the linear operator $\Sigma_\nu$ satisfies $\Sigma_\nu\succ 2L\rm{I}$, where $L$ is the global Lipschitz constant defined in Assumption \ref{ass1} (v). We also assume that the error sequence $\{d_k\}$ defined in (\ref{sys1}) satisfies $\|d_k\|\leq \epsilon_k$, with $\sum_{k=1}^\infty \epsilon_k<\infty$, and the dual step-size $\tau\in(0,2)$. Let $\{(v^k,\lambda^k)\}$ be the sequence generated by Algorithm \ref{ALM} with $v^k:=(z^k,x^k)$, then the sequence $\{(v^k,\lambda^k)\}$ is bounded in the space $(H\times X)\times X\times (H\times X)$.
\end{lemma}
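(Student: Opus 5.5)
The plan is to turn Theorem~\ref{Th-ineq} into a quasi-Fej\'er-type recursion for the merit sequence $\Phi_k:=\Phi(v^k,v^{k-1},\lambda^k)$. Then $\Phi_k$ is shown to be bounded, and boundedness of all components of $(z^k,x^k,\lambda^k)$ is read off from it. Throughout, fix a variational KKT point $(v^*,\lambda^*)$, which exists by Assumption~\ref{ass1}(i) and Theorem~\ref{Equivalence-KKT-NE}.

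\textbf{Step 1: drop the right-hand side of Theorem~\ref{Th-ineq}(a).}
\begin{itemize}
\item Since $\tau\in(0,2)$, the coefficient $\frac{1}{\tau}-\frac{1}{2}$ of the $\lambda$-term is positive.
\item $\mathcal{T}=\mathcal{U}\mathcal{D}^{-1}\mathcal{U}^*\succeq 0$ because $\mathcal{D}$ is self-adjoint and strongly monotone.
\item $\Sigma_\nu\succ 2L\mathrm{I}$ gives $\frac12\|v^k-v^{k+1}\|^2_{\widetilde\Sigma}\ge L\|x^{k+1}-x^k\|^2$.
\end{itemize}
Hence the right-hand side of (a) is nonnegative, and Cauchy--Schwarz yields
\[
\Phi_{k+1}\le \Phi_k+2\epsilon_k\|v^{k+1}-v^*\| .
\]

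\textbf{Step 2: control $\|v^{k+1}-v^*\|$ by $\Phi$.} This is the main obstacle. Theorem~\ref{Th-ineq}(b) gives $\Phi_k\ge \|v^k-v^*\|^2_{\mathcal{T}+\widetilde\Sigma/2}+\frac{1}{\tau\sigma}\|\lambda^k-\lambda^*\|^2$. Since $\widetilde\Sigma$ vanishes on the $z$-block, this controls $\|x^k-x^*\|$ and $\|\lambda^k-\lambda^*\|$ by $c\sqrt{\Phi_k}$, but not $z^k$ directly.

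To recover $z$, I would use the multiplier update (\ref{up-lam}) together with feasibility $\mathcal{B}(z^*,x^*)=0$:
\[
z_1^{k+1}-z_1^*=\mathcal{A}(x^{k+1}-x^*)-\tfrac{1}{\tau\sigma}(\lambda_1^{k+1}-\lambda_1^k),\qquad
z_2^{k+1}-z_2^*=(x^{k+1}-x^*)-\tfrac{1}{\tau\sigma}(\lambda_2^{k+1}-\lambda_2^k).
\]
Boundedness of $\mathcal{A}$ (Assumption~\ref{ass1}(iii) and continuity of $e$) then gives a constant $C$ with
\[
\|v^{k+1}-v^*\|\le C\big(\sqrt{\Phi_{k+1}}+\sqrt{\Phi_k}\big).
\]

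\textbf{Step 3: bound $\Phi_k$.} Set $M_k:=\max_{0\le j\le k}\Phi_j$ and $S:=\sum_j\epsilon_j<\infty$. Summing the recursion from Step 1 and inserting Step 2 gives
\[
M_{k+1}\le \Phi_0+4CS\sqrt{M_{k+1}} .
\]
This is a quadratic inequality in $\sqrt{M_{k+1}}$, so $\sqrt{M_{k+1}}\le 2CS+\sqrt{4C^2S^2+\Phi_0}$, uniformly in $k$. (Alternatively, once boundedness is known, Theorem~\ref{thw} gives convergence of $\Phi_k$, though that is not needed here.)

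\textbf{Step 4: conclude.} A uniform bound on $\Phi_k$ yields, via Theorem~\ref{Th-ineq}(b), uniform bounds on $\|x^k-x^*\|$ and $\|\lambda^k-\lambda^*\|$. The identity in Step 2 then bounds $\|z^k-z^*\|$. Therefore $\{(v^k,\lambda^k)\}$ is bounded.
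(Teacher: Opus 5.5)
Your proof is correct and follows the paper's argument. It uses the same one-step inequality $\Phi_{k+1}\le\Phi_k+2\epsilon_k\|v^{k+1}-v^*\|$, the same recovery of $z^{k+1}$ from the multiplier update to get $\|v^{k+1}-v^*\|\le C(\sqrt{\Phi_k}+\sqrt{\Phi_{k+1}})$, and the same final read-off through Theorem~\ref{Th-ineq}(b). The paper closes the recursion by factoring $\Phi_{k+1}-\Phi_k$ as a difference of squares, which gives $\sqrt{\Phi_{k+1}}\le\sqrt{\Phi_k}+2\tilde C\epsilon_k$, and then telescoping; your running-maximum quadratic inequality is an equally valid way to close it. One small point: Theorem~\ref{Equivalence-KKT-NE} only yields a multiplier in $Y^*$, not necessarily one of the form $e^*\lambda_1$ with $\lambda_1\in H$, so the existence of a KKT point of (\ref{sGNEP-Eq}) is really an implicit hypothesis here, just as it is in the paper.
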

\begin{proof}
Denote $\Phi_k:=\Phi(v^k,v^{k-1},\lambda^k)$, where $k\geq 1$. By Theorem \ref{Th-ineq}(a), together with $\tau\in(0,2)$ and $\Sigma_\nu\succ 2L{\rm I}$ for all $\nu=1,2,\ldots,N$, we have $\Phi_k-\Phi_{k+1}\geq -2\langle d_k,v^{k+1}-v^*\rangle$. Since $\|d_k\|\leq \epsilon_k$, using the Cauchy-Schwarz inequality we have
$\Phi_k-\Phi_{k+1}\geq -2\langle d_k,v^{k+1}-v^*\rangle
\geq -2\epsilon_k\|v^{k+1}-v^*\|$.

Next, we estimate $\|v^{k+1}-v^*\|$. By Theorem \ref{Th-ineq}(b), the definition of $\widetilde{\Sigma}$ (\ref{sys1}), and the strong monotonicity of each $\Sigma_\nu$, there exists a constant $C>0$ such that
$\|x^k-x^*\|^2\leq C\Phi_k$ and $\|\lambda^k-\lambda^*\|^2\leq C\Phi_k$. On the other hand, from the dual update (\ref{up-lam}) and the definition of $\mathcal{B}$ (\ref{def-Bpsi}), we have $z^{k+1}=
\left(
\mathcal{A}x^{k+1},x^{k+1}
\right)^\top
+\frac{1}{\tau\sigma}(\lambda^k-\lambda^{k+1})$. Since $v^*$ is feasible, there holds
$z^{k+1}-z^*=(\mathcal{A}(x^{k+1}-x^*),x^{k+1}-x^*)^\top
+\frac{1}{\tau\sigma}(\lambda^k-\lambda^{k+1})$.
From the boundedness of $\mathcal{A}$ and the triangle inequality, there exists a constant $C_1$ such that $\|z^{k+1}-z^*\|\leq C_1(\|x^{k+1}-x^*\|+\|\lambda^k-\lambda^*\|+\|\lambda^{k+1}-\lambda^*\|)$.
Due to $\|x^k-x^*\|^2\leq C\Phi_k$ and $\|\lambda^k-\lambda^*\|^2\leq C\Phi_k$, we conclude that there exists a constant ${C}^\prime$ such that $\|z^{k+1}-z^*\|\leq C^\prime(\sqrt{\Phi_k}+\sqrt{\Phi_{k+1}})$. Note that $\|v^{k+1}-v^*\|\leq\|x^{k+1}-x^*\|+\|z^{k+1}-z^*\|$, which further means that there exists a constant $\tilde{C}$ such that $\|v^{k+1}-v^*\|\leq \tilde{C}(\sqrt{\Phi_k}+\sqrt{\Phi_{k+1}})$. Combining with $\Phi_k-\Phi_{k+1}
\geq -2\epsilon_k\|v^{k+1}-v^*\|$, we deduce that 
$\Phi_k-\Phi_{k+1}\geq
-2\tilde{C}\epsilon_k(\sqrt{\Phi_k}+\sqrt{\Phi_{k+1}})$, which means $\sqrt{\Phi_k}+2\tilde{C}\epsilon_k\geq\sqrt{\Phi_{k+1}}$. Then, we deduce that $\sqrt{\Phi_{k+1}}\leq 2\tilde{C}\sum_{i=1}^k\epsilon_i+\sqrt{\Phi_1}$. Combining this with the summability of $\{\epsilon_k\}$, it follows that $\Phi_{k+1}=\Phi(v^{k+1},v^k,\lambda^{k+1})$ is bounded. From the conclusion (b) of Theorem \ref{Th-ineq}, the sequence $\{x^k\}$ and $\{\lambda^k\}$ are bounded. Combining (\ref{up-lam}) with the definition of $\mathcal{B}$ (\ref{def-Bpsi}), we have $z^{k+1}=\frac{\lambda^k-\lambda^{k+1}}{\tau\sigma}+\left(          \mathcal{A}x^{k+1},x^{k+1}\right)^\top$. Due to the boundedness of the operator $\mathcal{A}$, the sequence $\{z^k\}$ is also bounded, and thus the proof is complete.
\end{proof}
\begin{theorem}\label{Convergence-Th}
    Suppose that Assumption \ref{ass1} holds, the linear operators $\Sigma_\nu\succ 2L\rm{I}$ for $\nu=1,2,\ldots,N$, where $L$ is the global Lipschitz constant defined in Assumption \ref{ass1} (v), the dual step-size $\tau\in(0,2)$, and the error sequence $\{d_k\}$ defined in (\ref{sys1}) satisfies $\|d_k\|\leq \epsilon_k$ with $\sum_{k=1}^\infty \epsilon_k<\infty$. Then, for the sequence $\{(v^k,\lambda^k)\}$ with $v^k:=(z^k,x^k)$ generated by Algorithm \ref{ALM}, there holds
\begin{enumerate}
    \item[(a).] any weak limit point of the sequence $\{(v^k,\lambda^k)\}$ is a variational KKT point of the GNEP (\ref{sGNEP-Eq});
    \item[(b).] the whole sequence $\{(v^k,\lambda^k)\}$ converges weakly to a variational KKT point of the GNEP (\ref{sGNEP-Eq}).
    \item[(c).] if the pseudo-gradient operator $P_J$ is strongly monotone, then the sequence $\{x^k\}$ converges strongly to the unique normalized Nash equilibrium of the original GNEP (\ref{sGNEP}).
\end{enumerate}
\end{theorem}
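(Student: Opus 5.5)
The plan is to start from the descent inequality in Theorem~\ref{Th-ineq}(a) and the boundedness result of Lemma~\ref{bounded-iterate}. From these we extract asymptotic regularity; we then show that weak cluster points are KKT points using the weak--strong closedness of the maximal monotone operator $F$, and finally conclude via the Opial-type result Theorem~\ref{th: weak-converge}.

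\textbf{Step 1: summability of the successive differences.} Write $\Phi_k:=\Phi(v^k,v^{k-1},\lambda^k)$. Since $\Sigma_\nu\succ 2L{\rm I}$ and $\tau\in(0,2)$, there is a constant $c>0$ such that
\[
\tfrac12\|v^k-v^{k+1}\|^2_{\mathcal{T}+\widetilde\Sigma}-L\|x^{k+1}-x^k\|^2\ge c\|x^{k+1}-x^k\|^2 .
\]
The $\lambda$-term in Theorem~\ref{Th-ineq}(a) also has a positive coefficient. By Lemma~\ref{bounded-iterate}, $|\langle d_k,v^{k+1}-v^*\rangle|\le M\epsilon_k$, which is summable. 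Telescoping and using $\Phi_k\ge0$ (Theorem~\ref{Th-ineq}(b)) gives
\[
\sum_k\bigl(\|x^{k+1}-x^k\|^2+\|\lambda^{k+1}-\lambda^k\|^2\bigr)<\infty .
\]
Hence $x^{k+1}-x^k\to0$ and $\lambda^{k+1}-\lambda^k\to0$ strongly. The identity $z^{k+1}=(\mathcal{A}x^{k+1},x^{k+1})^\top+\frac{1}{\tau\sigma}(\lambda^k-\lambda^{k+1})$ then yields $z^{k+1}-z^k\to0$, so $v^{k+1}-v^k\to0$. Theorem~\ref{Th-ineq}(a) also gives $\Phi_{k+1}\le\Phi_k+2M\epsilon_k$, so by Theorem~\ref{thw} the limit $\lim_k\Phi_k$ exists.

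\textbf{Step 2: part (a).} We show $R_k\to0$ strongly in \eqref{residual-def}. This holds because $\mathcal{T}+\widetilde\Sigma$ and $\mathcal{B}$ are bounded, $d_k\to0$, $\lambda^k-\lambda^{k+1}\to0$, and, by the Lipschitz continuity of $P_J$,
\[
\|P_J(x^{k+1})-2P_J(x^k)+P_J(x^{k-1})\|\le L\bigl(\|x^{k+1}-x^k\|+\|x^k-x^{k-1}\|\bigr)\to0 .
\]
Let $(v^{k_j+1},\lambda^{k_j+1})\rightharpoonup(\bar v,\bar\lambda)$. By Lemma~\ref{operatorF-prop}, $F$ is maximal monotone, so its graph is sequentially closed in the weak$\times$strong topology. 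Applying this to \eqref{KKT-residual} gives $0\in F(\bar v,\bar\lambda)$. Because $v^{k+1}-v^k\to0$, shifting the index does not change the set of weak cluster points.

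\textbf{Step 3: part (b), the main obstacle.} Theorem~\ref{th: weak-converge} needs the convergence of a genuine norm distance, but $\Phi_k$ is not a norm. It contains the cross term $-2\langle P_J(x^k)-P_J(x^{k-1}),x^k-x^*\rangle$ and the term $L\|x^k-x^{k-1}\|^2$. Both tend to $0$, since $x^k-x^{k-1}\to0$ and $x^k$ is bounded. Therefore
\[
\lim_k\Bigl(\|v^k-v^*\|^2_{\mathcal{T}+\widetilde\Sigma}+\tfrac1{\tau\sigma}\|\lambda^k-\lambda^*\|^2\Bigr)
\]
exists for every KKT point $(v^*,\lambda^*)$. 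The difficulty is that $\mathcal{T}+\widetilde\Sigma$ may fail to be positive definite in the $z$-component. To handle this, I would work with the pair $(x,\lambda)$, which sits in the Hilbert space with the inner product induced by $\mathcal{T}_{xx}+\Sigma$ (positive definite) and $\frac1{\tau\sigma}{\rm I}$. Since $\mathcal{T}$ is positive semidefinite, the $z$-coupling is handled as follows: $z^k$ is an affine bounded function of $(x^k,\lambda^{k-1},\lambda^k)$ with $\lambda^{k-1}-\lambda^k\to0$, so the $z$-part of the seminorm can be rewritten as a continuous quadratic in $(x^k,\lambda^k)$ plus $o(1)$. This gives an equivalent Hilbert norm $\|\cdot\|_G$ on $(x,\lambda)$ for which $\lim_k\|(x^k,\lambda^k)-(x^*,\lambda^*)\|_G$ exists for every cluster point. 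Theorem~\ref{th: weak-converge} then yields $(x^k,\lambda^k)\rightharpoonup(\bar x,\bar\lambda)$. Weak convergence of $z^k$ follows from the same affine relation, and the limit is a KKT point by Step 2.

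\textbf{Step 4: part (c).} Under $\rho$-strong monotonicity of $P_J$, inequality \eqref{ineq1} gains the term $\rho\|x^{k+1}-x^*\|^2$. Redoing the telescoping of Step 1 gives
\[
\sum_k\rho\|x^{k+1}-x^*\|^2<\infty ,
\]
so $x^k\to x^*$ strongly. Strong monotonicity also forces uniqueness of the normalized Nash equilibrium, which coincides with $x^*$ by Theorem~\ref{Equivalence-KKT-NE} and Remark~\ref{remark-embedding}.
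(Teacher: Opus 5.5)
Your proposal is correct. Step 1 and part (a) follow the paper essentially verbatim: summability from Theorem~\ref{Th-ineq}(a) together with Lemma~\ref{bounded-iterate}, then $R_k\to0$, then weak--strong closedness of the graph of $F$. Parts (b) and (c) take somewhat different routes.

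\textbf{Part (b).} The paper applies Theorem~\ref{th: weak-converge} directly to $\mathcal{N}(v^k,\lambda^k)=((\mathcal{T}+\widetilde{\Sigma})^{1/2}v^k,\lambda^k/\sqrt{\tau\sigma})$. It then recovers $x^k$ by appealing to the strong monotonicity of $\Sigma_\nu$, and $z^k$ from the affine relation. It does not address the possible degeneracy of $\mathcal{T}+\widetilde{\Sigma}$ in $z$; to close that step one needs the observation that two weak cluster points satisfy $\|\bar x_1-\bar x_2\|_\Sigma^2\le\|\bar v_1-\bar v_2\|^2_{\mathcal{T}+\widetilde{\Sigma}}=0$.

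You instead eliminate $z$ first. Write $v^k-v^*=E(x^k-x^*)+e_k$ with $Ex:=((\mathcal{A}x,x),x)$ and $e_k:=(\frac{1}{\tau\sigma}(\lambda^{k-1}-\lambda^k),0)\to0$. The seminorm then becomes $\|x^k-x^*\|^2_{E^*(\mathcal{T}+\widetilde{\Sigma})E}+o(1)$. State explicitly that $E^*(\mathcal{T}+\widetilde{\Sigma})E\succeq\Sigma$, because $\langle(\mathcal{T}+\widetilde{\Sigma})Ex,Ex\rangle=\|Ex\|^2_{\mathcal{T}}+\|x\|^2_\Sigma$ and $\mathcal{T}\succeq0$; this is exactly what makes your $\|\cdot\|_G$ an equivalent Hilbert norm. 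Also note that any weak cluster point $(\bar x,\bar\lambda)$ of $\{(x^k,\lambda^k)\}$ extends to the KKT point $((\mathcal{A}\bar x,\bar x),\bar x,\bar\lambda)$, by part (a) and the affine relation. That is what licenses the hypothesis of Theorem~\ref{th: weak-converge}. With these two remarks your route handles the degeneracy more cleanly than the paper and applies the Opial-type lemma in a genuine norm.

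\textbf{Part (c).} The paper argues $c\|x^{k+1}-\bar x\|^2\le\langle R_k,(v^{k+1}-\bar v,\lambda^{k+1}-\bar\lambda)\rangle\to0$, using $R_k\to0$ and boundedness. You instead carry the term $\rho\|x^{k+1}-x^*\|^2$ through the derivation of Theorem~\ref{Th-ineq}(a) and telescope. This is valid: the extra term simply appears on the right-hand side of (a), and nonnegativity of $\Phi_k$ is unaffected. It gives the stronger conclusion $\sum_k\|x^{k+1}-x^*\|^2<\infty$. The paper's argument yields only convergence, but it is a one-liner once $R_k\to0$ is available.
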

\begin{proof}
 (a). It follows from Lemma \ref{bounded-iterate} that there exists a constant $C$ such that $\|v^{k+1}-v^*\|\leq C$ for all $k$. Combining this with the conclusion (a) of Theorem \ref{Th-ineq}, we have $\frac{1}{2}(\Phi(v^k,v^{k-1},\lambda^k)-\Phi(v^{k+1},v^{k},\lambda^{k+1}))+C\epsilon_k
       \geq \frac{1}{2}\|v^k-v^{k+1}\|^2_{\mathcal{T}}+\left(\frac{1}{\tau}-\frac{1}{2}\right)\frac{1}{\tau\sigma}\|\lambda^k-\lambda^{k+1}\|^2+\frac{1}{2}\|x^k-x^{k+1}\|^2_{\Sigma-2L{\rm{I}}}$.  
Summing both sides from $k=1$ to infinity, we obtain
\begin{equation}\label{ineq-approx}
\begin{aligned}
    &\sum_{k=1}^\infty\left(\frac{1}{2}\|v^k-v^{k+1}\|^2_{\mathcal{T}}+\left(\frac{1}{\tau}-\frac{1}{2}\right)\frac{1}{\tau\sigma}\|\lambda^k-\lambda^{k+1}\|^2+\frac{1}{2}\|x^k-x^{k+1}\|^2_{\Sigma-2L{\rm{I}}}\right)\\
    \leq &\frac{1}{2}\Phi(v^1,v^0,\lambda^1)+C\sum_{k=1}^\infty\epsilon_k<\infty,
    \end{aligned}
\end{equation}
where the inequality follows from the nonnegative property of $\{\Phi(v^k,v^{k-1},\lambda^k)\}$ (the conclusion (b) of Theorem \ref{Th-ineq}) and the summability of $\{\epsilon_k\}$. Since the dual step-size $\tau\in(0,2)$ and $\Sigma_\nu\succ 2L{\rm{I}}$, we conclude that $\|v^k-v^{k+1}\|_{\mathcal{T}}\to 0$, $\|x^k-x^{k+1}\|\to 0$ and $\|\lambda^k-\lambda^{k+1}\|\to 0$ as $k\to \infty$. From the dual update (\ref{up-lam}), there holds $z^{k+1}=\frac{\lambda^k-\lambda^{k+1}}{\tau\sigma}+\left(
         \mathcal{A}x^{k+1},x^{k+1}\right)^\top$, which further means $\|z^{k+1}-z^k\|\to 0$. Thus, we obtain $\|v^k-v^{k+1}\|\to 0$ and $\|\lambda^k-\lambda^{k+1}\|\to 0$. It follows from the global Lipschitz continuity of $P_J(\cdot)$ (Assumption \ref{ass1} (v)) that $\|P_J(x^k)-P_J(x^{k+1})\|\leq L\|x^k-x^{k+1}\|$, and thus $\|P_J(x^k)-P_J(x^{k+1})\|$ tends to zero. Therefore, we conclude that $R_k\to 0$ strongly, where $R_k$ is defined in (\ref{residual-def}).

Due to the uniform boundedness of the sequence $\{(v^k,\lambda^k)\}$ shown by Lemma \ref{bounded-iterate}, and the weak compactness of Hilbert spaces, there exists a subsequence denoted as $\{(v^{k_j},\lambda^{k_j})\}$ that converges weakly to some weak limit point $(\bar{v},\bar{\lambda})$. Notice that $R_k\in F(v^{k+1},\lambda^{k+1})$ given in (\ref{KKT-residual}), $R_k\to 0$ and $(v^{k_j},\lambda^{k_j})\rightharpoonup (\bar{v},\bar{\lambda})$. Combining these facts with the strong-weak sequential closedness of the graph of a maximally monotone operator, $\|(v^{k+1},\lambda^{k+1})-(v^{k},\lambda^{k})\|\to 0$, and Lemma \ref{operatorF-prop}, we conclude that the weak limit point $(\bar{v},\bar{\lambda})$ is a variational KKT point of the GNEP (\ref{sGNEP-Eq}).

(b). Combining the conclusion (a) of Theorem \ref{Th-ineq} with assumptions on $\Sigma_\nu\succ 2L{\rm{I}}$ and the dual step-size $\tau\in(0,2)$, for each weak limit point $(\bar{v},\bar{\lambda})$, we have 
    $\Phi(v^{k+1},v^{k},\lambda^{k+1})\leq \Phi(v^{k},v^{k-1},\lambda^{k})+2\langle d_k,v^{k+1}-\bar{v}\rangle\leq \Phi(v^{k},v^{k-1},\lambda^{k})+C\epsilon_k$, 
where the last inequality follows from the Cauchy-Schwarz inequality and the bounded sequence $\{v^k\}$ obtained in Lemma \ref{bounded-iterate}, and $C$ is a constant such that $\|v^k-\bar{v}\|\leq C$. Due to the nonnegative sequence $\{\Phi(v^{k+1},v^k,\lambda^{k+1})\}$ and the summability of $\{\epsilon_k\}$, it follows from Theorem \ref{thw} that $\{\Phi(v^{k+1},v^k,\lambda^{k+1})\}$ converges to a finite limit. Combining the definition of $\Phi(v^{k+1},v^k,\lambda^{k+1})$ (\ref{def-Phifun}) with $\|x^k-x^{k-1}\|\to 0$ and $\|P_J(x^k)-P_J(x^{k-1})\|\to 0$, we deduce that $\|v^k-\bar{v}\|^2_{\mathcal{T}+\widetilde{\Sigma}}+\frac{1}{\tau\sigma}\|\lambda^k-\bar{\lambda}\|^2$ converges as $k$ tends to infinity. Let us define $\mathcal{N}(v,\lambda):=((\mathcal{T}+\widetilde{\Sigma})^\frac{1}{2}v,\frac{\lambda}{\sqrt{\tau\sigma}})^{\top}$. It follows from Theorem \ref{th: weak-converge} that the sequence $\{\mathcal{N}(v^k,\lambda^k)\}$ is weakly convergent, which means that $\{\lambda^k\}$ weakly converges. Because of the strong monotonicity of each $\Sigma_\nu, (\nu=1,2,\ldots,N)$, we conclude that $\{x^k\}$ is weakly convergent. Notice that $z^{k+1}=\frac{\lambda^k-\lambda^{k+1}}{\tau\sigma}+\left(
         \mathcal{A}x^{k+1},x^{k+1}\right)^\top$ and $\|\lambda^k-\lambda^{k+1}\|\to 0$, we deduce that the sequence $\{z^k\}$ is weakly convergent. Hence, we complete the proof of (b).

(c). From the strong monotonicity of the operator $P_J(\cdot)$, the conclusion obtained in Lemma \ref{operatorF-prop} can be strengthened to $\langle f_1-f_2,(z_1,x_1,\lambda_1)-(z_2,x_2,\lambda_2)\rangle\geq \langle P_J(x_1)-P_J(x_2),x_1-x_2 \rangle\geq c\|x_1-x_2\|^2$, for any $f_1\in F(z_1,x_1,\lambda_1), f_2\in F(z_2,x_2,\lambda_2)$, 
where $c$ is a constant that corresponds to the strong monotonicity of $P_J(\cdot)$. In particular, if there exist two normalized Nash equilibria, denoted as $\bar{x}$ and $\widetilde{x}$, then $0\geq c\|\bar{x}-\widetilde{x}\|^2$. Hence, the normalized Nash equilibrium is unique. Let $\bar{x}$ denote the unique normalized Nash equilibrium. Since $R_k\in F(z^{k+1},x^{k+1},\lambda^{k+1})$, we have 
  $c\|x^{k+1}-\bar{x}\|^2\leq\langle R_k,(v^{k+1}-\bar{v},\lambda^{k+1}-\bar{\lambda})\rangle$.
From the proof of (a), $R_k\to 0$ strongly. Combining this with the uniform boundedness of the sequence $\{(v^k,\lambda^k)\}$ obtained by Lemma \ref{bounded-iterate}, we deduce that $\langle R_k,(v^{k+1}-\bar{v},\lambda^{k+1}-\bar{\lambda})\rangle\to 0$. Hence, we complete the proof.
\end{proof}
\subsection{Convergence Rate}
In this section, we estimate the non-ergodic convergence rate of Algorithm \ref{ALM}. To this end, we need to define a computable residual to reflect the distance between the current iteration and the variational KKT point. Let us first introduce the following notation:
\begin{equation}\label{notations-1}
\begin{aligned}
&F_1(z,x,\lambda):=\left(\begin{array}{c}
        \partial\psi(z)  \\
         0\\
          0
    \end{array}\right),\quad F_2(z,x,\lambda):=\left(
    \begin{array}{c}
          \left(\begin{array}{c}
             0\\
             P_J(x)
        \end{array}\right) +\mathcal{B}^*\lambda\\
      -\mathcal{B}(z,x)
       \end{array}\right).
\end{aligned}
\end{equation}
Notice that the KKT conditions are $0\in F(z,x,\lambda)$ and the operator $F$ is defined in (\ref{def-F}) which satisfies $F=F_1+F_2$. Then, let us introduce the computable KKT residual $\|\mathcal{R}_{k}\|$, where $\mathcal{R}_{k}$ is defined as follows:
\begin{equation}\label{KKT_residual_definition}
    \mathcal{R}_{k}=(z^k,x^k,\lambda^k)-\operatorname{prox}_{F_1}((z^k,x^k,\lambda^k)-F_2(z^k,x^k,\lambda^k))=\left(\begin{array}{c}
        z^k-\operatorname{prox}_\psi(z^k+\lambda^k)  \\
    P_J(x^k)+\mathcal{A}^*\lambda_1^{k}+\lambda_2^{k}\\
        -\mathcal{B}(z^{k},x^{k})
    \end{array}\right).
\end{equation}

\begin{theorem}\label{Th-Rate}
Suppose that Assumption \ref{ass1} holds, the dual step-size $\tau\in (0,2)$, and, for each $\nu=1,2,\ldots,N$, the linear operator $\Sigma_\nu\succ 2L\rm{I}$, where $L$ is the Lipschitz constant in Assumption \ref{ass1} (v). Assume that the error sequence $\{d_k\}$ in (\ref{sys1}) satisfies $\|d_k\|\leq \epsilon_k$ with $\sum_{k=1}^\infty \epsilon_k<\infty$. Let $\{(v^{k},\lambda^k)\}$ be the sequence generated by Algorithm \ref{ALM}, with $v^k:=(z^k,x^k)$. Then, as $K$ tends to infinity, the KKT residual sequence $\{\|\mathcal{R}_{k}\|\}$ satisfies
    $\min_{1\leq k\leq K}\|\mathcal{R}_k\|^2=o\left(\frac{1}{K}\right)$.
\end{theorem}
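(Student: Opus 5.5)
The plan is to bound $\|\mathcal{R}_{k+1}\|^2$ by a constant times the one-step quantities that appear on the right-hand side of the summable inequality (\ref{ineq-approx}), plus an error term controlled by $\epsilon_k^2$. Summability then gives $\sum_k\|\mathcal{R}_{k+1}\|^2<\infty$. The $o(1/K)$ statement follows from a standard argument applied to a summable nonnegative sequence $a_k$. Since $\min_{1\le k\le K}a_k\le a_j$ for every $j\le K$, we get
\[
\tfrac{K}{2}\min_{1\le k\le K} a_k\le \sum_{j=\lceil K/2\rceil}^{K} a_j\to 0 .
\]

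\textbf{Step 1 (express $\mathcal{R}_{k+1}$ via $R_k$).} By (\ref{KKT-residual}), $R_k\in F(z^{k+1},x^{k+1},\lambda^{k+1})$. Write $R_k=(r_z,r_x,r_\lambda)$.

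The $z$-block says $r_z-\lambda^{k+1}\in\partial\psi(z^{k+1})$. Equivalently, $z^{k+1}=\operatorname{prox}_\psi(z^{k+1}+\lambda^{k+1}-r_z)$. Since $\operatorname{prox}_\psi$ is nonexpansive,
\[
\|z^{k+1}-\operatorname{prox}_\psi(z^{k+1}+\lambda^{k+1})\|\le\|r_z\|.
\]

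The $x$-block gives $P_J(x^{k+1})+\mathcal{A}^*\lambda^{k+1}_1+\lambda^{k+1}_2=r_x$ exactly. The $\lambda$-block gives $-\mathcal{B}(z^{k+1},x^{k+1})=r_\lambda$ exactly. Hence $\|\mathcal{R}_{k+1}\|\le\|R_k\|$.

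\textbf{Step 2 (bound $\|R_k\|^2$).} From (\ref{residual-def}), using the boundedness of $\mathcal{T}$, $\widetilde\Sigma$ and $\mathcal{B}$ together with the Lipschitz continuity of $P_J$, we obtain
\[
\|R_k\|^2\le C\big(\epsilon_k^2+\|v^k-v^{k+1}\|^2+\|\lambda^k-\lambda^{k+1}\|^2+\|x^k-x^{k-1}\|^2\big).
\]
The term $\|x^k-x^{k-1}\|^2$ comes from $P_J(x^{k+1})-2P_J(x^k)+P_J(x^{k-1})$.

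The main obstacle is the term $\|v^k-v^{k+1}\|^2$. Inequality (\ref{ineq-approx}) only directly controls $\|v^k-v^{k+1}\|_{\mathcal{T}}^2$, $\|x^k-x^{k+1}\|^2$ and $\|\lambda^k-\lambda^{k+1}\|^2$, and $\mathcal{T}$ need not be positive definite in the $z$-component. Two observations handle this.

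First, the $\mathcal{T}$-seminorm is bounded by the full norm, $\|v\|_{\mathcal{T}}^2\le\|\mathcal{T}\|\,\|v\|^2$. So the $(\mathcal{T}+\widetilde\Sigma)(v^k-v^{k+1})$ part of $R_k$ is bounded by $\|\mathcal{T}+\widetilde\Sigma\|^{1/2}\|v^k-v^{k+1}\|_{\mathcal{T}+\widetilde\Sigma}$, using that $\mathcal{T}+\widetilde\Sigma$ is positive semidefinite.

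Second, the $z$-difference is controlled by the identity from the proof of Lemma \ref{bounded-iterate},
\[
z^{k+1}=\tfrac{1}{\tau\sigma}(\lambda^k-\lambda^{k+1})+(\mathcal{A}x^{k+1},x^{k+1}).
\]
Consequently $\|z^{k+1}-z^k\|^2\le C(\|\lambda^k-\lambda^{k+1}\|^2+\|\lambda^{k-1}-\lambda^k\|^2+\|x^{k+1}-x^k\|^2)$, valid for $k\ge1$.

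Combining these, every term in the bound on $\|R_k\|^2$ is either $\epsilon_k^2$ or a one-step difference appearing in (\ref{ineq-approx}), possibly with an index shift of one. Because $\tau\in(0,2)$ and $\Sigma-2L\mathrm{I}\succ0$, each of those differences is summable by (\ref{ineq-approx}). Also $\sum_k\epsilon_k^2\le(\sum_k\epsilon_k)^2<\infty$.

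\textbf{Step 3 (conclude).} Steps 1 and 2 give $\sum_k\|\mathcal{R}_{k+1}\|^2<\infty$. The tail argument stated at the outset then yields $\min_{1\le k\le K}\|\mathcal{R}_k\|^2=o(1/K)$.
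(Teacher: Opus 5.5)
Your proposal is correct and follows the paper's proof. You get $\|\mathcal{R}_{k+1}\|\le\|R_k\|$ from nonexpansiveness of the proximal map, bound $\|R_k\|^2$ by $\epsilon_k^2$, Lipschitz differences of $P_J$, multiplier differences and $\|\mathcal{T}+\widetilde{\Sigma}\|\,\|v^k-v^{k+1}\|^2_{\mathcal{T}+\widetilde{\Sigma}}$, apply the summability in (\ref{ineq-approx}), and then write out explicitly the tail argument that the paper calls ``immediate'' (your extra control of $z^{k+1}-z^k$ via the multiplier update is valid but not needed). One harmless sign slip: the $z$-block of $\mathcal{B}^*\lambda$ is $-\lambda$, so the inclusion is $r_z+\lambda^{k+1}\in\partial\psi(z^{k+1})$, i.e.\ $z^{k+1}=\operatorname{prox}_\psi(z^{k+1}+\lambda^{k+1}+r_z)$, which still yields $\|z^{k+1}-\operatorname{prox}_\psi(z^{k+1}+\lambda^{k+1})\|\le\|r_z\|$.
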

\begin{proof}
Notice that each iterate $(z^{k+1},x^{k+1},\lambda^{k+1})$ satisfies (\ref{KKT-residual}), where $R_k$ is given in (\ref{residual-def}). Combining with $F=F_1+F_2$ defined in (\ref{notations-1}), we can reformulate (\ref{KKT-residual})
by $(z^{k+1},x^{k+1},\lambda^{k+1})=\operatorname{prox}_{F_1}((z^{k+1},x^{k+1},\lambda^{k+1})+R_k-F_2(z^{k+1},x^{k+1},\lambda^{k+1}))$ equivalently. Due to the non-expansive property of $\operatorname{prox}_{F_1}(\cdot)$, we have $\|\mathcal{R}_{k+1}\|\leq\|R_{k}\|$. Next, let us estimate $\|R_k\|$, from the definition of $R_k$ (\ref{residual-def}) there holds
    \begin{equation*}
    \begin{aligned}
        \|R_k\|^2\leq &5\|d_k\|^2+5\|P_J(x^{k+1})-P_J(x^{k})\|^2+5\|P_J(x^{k})-P_J(x^{k-1})\|^2
       \\ 
       +&5(1-\frac{1}{\tau})^2\|\lambda^k-\lambda^{k+1}\|^2_{\mathcal{B}\mathcal{B}^*}+5\|(\mathcal{T}+\widetilde{\Sigma})(v^k-v^{k+1})\|^2+\frac{\|\lambda^k-\lambda^{k+1}\|^2}{\tau^2\sigma^2}\\
    \leq&5\|d_k\|^2+5L^2\|x^{k+1}-x^k\|^2+5L^2\|x^{k}-x^{k-1}\|^2+5\|\mathcal{T}+\widetilde{\Sigma}\|\|v^k-v^{k+1}\|^2_{\mathcal{T}+\widetilde{\Sigma}}\\
    +&\left(5(1-\frac{1}{\tau})^2\|\mathcal{B}\mathcal{B}^*\|+\frac{1}{\tau^2\sigma^2}\right)\|\lambda^k-\lambda^{k+1}\|^2.
    \end{aligned}
    \end{equation*}
   Then, we analyze each term of the right-hand side separately. Since $\|d_k\|\leq\epsilon_k$ and $\sum_{k=0}^\infty\epsilon_k<\infty$, we have $\sum_{k=1}^\infty\|d_k\|^2<\infty$. 
From the proof of Theorem \ref{Convergence-Th}, we obtain the inequality (\ref{ineq-approx}), which implies
 $ \sum_{k=0}^\infty\|v^k-v^{k+1}\|^2_{\mathcal{T}}<\infty, \sum_{k=0}^\infty\|x^k-x^{k+1}\|^2<\infty$, and $\sum_{k=0}^\infty\|\lambda^k-\lambda^{k+1}\|^2<\infty$.
Notice that $\|v^k-v^{k+1}\|_{\mathcal{T}+\widetilde{\Sigma}}^2=\|v^k-v^{k+1}\|_{\mathcal{T}}^2+\|x^k-x^{k+1}\|^2_\Sigma\leq\|v^k-v^{k+1}\|_{\mathcal{T}}^2+\|\Sigma\|\|x^k-x^{k+1}\|^2$. 
Therefore, we conclude that $\sum_{k=0}^\infty\|R_k\|^2<\infty$. The desired conclusion then follows
immediately.
\end{proof}

 \begin{remark}
  We note that, due to the sGS-induced proximal term, the residual sequence $\{\|\mathcal{R}_k\|\}$ (\ref{KKT_residual_definition}) is not necessarily monotonically decreasing. Thus, we state the non-ergodic convergence rate in the best-iterate sense.
\end{remark}
\section{A Practical sGS-APALM for GNEP (\ref{sGNEP-Eq})}\label{Sec: Pra-ver}
Notice that the implementation of sGS-APALM (Algorithm \ref{ALM}) relies on the construction of the quadratic surrogates in (\ref{quad-approx}), which depends on the choice of $\{\Sigma_\nu\}_{\nu=1}^N$. By Theorem \ref{Convergence-Th}, the convergence is guaranteed if $\Sigma_\nu\succ 2L\rm{I}$, where $L$ is the global Lipschitz constant of the pseudo-gradient mapping $P_J$. In practice, it is always difficult to obtain the constant $L$. To address this issue, we present a practical version of sGS-APALM that adaptively selects $\{\Sigma_\nu\}_{\nu=1}^N$. The resulting method is summarized in Algorithm \ref{adaptive-ALM}.

\begin{algorithm}[ht]
		\caption{A practical sGS-based alternating proximal ALM (sGS-APALM) for GNEP (\ref{sGNEP-Eq})}
		\hspace*{0.02in} {\bf Step 1:} Choose the penalty parameter $\sigma>0$, and the dual step-size $\tau\in (0,2)$. Initialize $\lambda^0\in H\times X$, $z^0\in H\times X$, and $x^0\in X$, set $k=0$ and let $x^{-1}=x^0$. Initialize $\Sigma_\nu=\beta_0{\rm{I}}$, with $\nu=1,2,\ldots,N$, and the factor $\gamma>1$ and parameter $\rho\in(0,1)$.\\
		\hspace*{0.02in} {\bf Step 2a:} Backward Sweeping: For $\nu=N,N-1,\ldots,1$, update $\bar{x}_{\nu}^{k+1}$ by inexactly solving 
		\begin{equation*}
		\min_{x_{\nu}} \hat{L}^{\nu}_{\sigma}(x^k_{\leq\nu-1},x_{\nu},\bar{x}^{k+1}_{\geq\nu+1},z^k;x^k,x^{k-1},\lambda^k).
		\end{equation*}
  \hspace*{0.02in} {\bf Step 2b:} Update $z^{k+1}$ by solving 
		\begin{equation*}
		\min_{z} \psi(z)+\langle\lambda^k,\mathcal{B}(z,\bar{x}^{k+1})\rangle+\frac{\sigma}{2}\|\mathcal{B}(z,\bar{x}^{k+1})\|^2.
		\end{equation*}
  \hspace*{0.02in} {\bf Step 2c:} Forward Sweeping: For $\nu=1,2,\ldots,N$, update $x_{\nu}^{k+1}$ by inexactly solving 
		\begin{equation*}
		\min_{x_{\nu}} \hat{L}^{\nu}_{\sigma} (x^{k+1}_{\leq\nu-1},x_{\nu},\bar{x}^{k+1}_{\geq\nu+1},z^{k+1};x^k,x^{k-1},\lambda^k).
		\end{equation*}
         \hspace*{0.02in} {\bf Step 3:} Compute the pseudo-gradient at $x^{k+1}$ by the notation (\ref{def-gra}). Then, adjust the proximal operators $\Sigma_\nu$, with $\nu=1,2,\ldots,N$ as follows:

    \qquad Case I. If there holds $\|P_J(x^{k+1})-P_J(x^k)\|\leq\frac{\rho}{2}\|x^{k+1}-x^k\|_{\Sigma^2}=\frac{\rho\beta_k}{2}\|x^{k+1}-x^k\|$, accept the iterate $(z^{k+1},x^{k+1})$, go to {\bf Step 4} and keep the proximal operators $\{\Sigma_\nu\}$ by setting $\beta_{k+1}=\beta_k$.
    
    \qquad Case II. If $\|P_J(x^{k+1})-P_J(x^k)\|>\frac{\rho}{2}\|x^{k+1}-x^k\|_{\Sigma^2}=\frac{\rho\beta_k}{2}\|x^{k+1}-x^k\|$, reject the obtained iterate $(z^{k+1},x^{k+1})$, set $\beta_{k}\leftarrow\gamma\beta_k$, and update the proximal operators by $\Sigma_\nu:=\beta_{k}{\rm{I}}$. Return {\bf Step 2a} and recompute the primal iterate $(z^{k+1},x^{k+1})$.

		\hspace*{0.02in} {\bf Step 4:} Update multiplier $\lambda^{k+1}$ by
		\begin{equation*}
	\lambda^{k+1}=\lambda^k+\tau\sigma\mathcal{B}(z^{k+1},x^{k+1}).
		\end{equation*}
		\hspace*{0.02in} {\bf Step 5:}
		 Set $k=k+1$ and return to \textbf{Step 2}.\\
   \label{adaptive-ALM}
	\end{algorithm}
\begin{lemma}\label{beta: stay constant}
    Suppose that Assumption \ref{ass1} holds. Then the parameter sequence $\{\beta_k\}$ defined in Algorithm \ref{adaptive-ALM} is non-decreasing. Moreover, Case II of Step 3 can occur only finitely many times; hence for sufficiently large $k$, $\beta_k$ remains constant. 
\end{lemma}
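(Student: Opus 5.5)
The plan is to argue directly from the update rule in Step~3 of Algorithm~\ref{adaptive-ALM}, using only the global Lipschitz continuity of $P_J$ from Assumption~\ref{ass1}(v).

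\textbf{Monotonicity of $\{\beta_k\}$.} This is immediate from the rule itself. In Case~I we set $\beta_{k+1}=\beta_k$. In Case~II, $\beta_k$ is replaced by $\gamma\beta_k$ with $\gamma>1$, and $\beta_k$ is never decreased anywhere in the algorithm. Hence, by induction on the number of passes through Step~3, $\{\beta_k\}$ is non-decreasing.

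\textbf{Finiteness of Case~II.} The key observation is a threshold argument. Suppose at some pass the current value satisfies $\beta_k\ge 2L/\rho$. Then, whatever primal iterate $(z^{k+1},x^{k+1})$ Steps~2a--2c produce, Lipschitz continuity gives
\begin{equation*}
\|P_J(x^{k+1})-P_J(x^k)\|\le L\|x^{k+1}-x^k\|\le \frac{\rho\beta_k}{2}\|x^{k+1}-x^k\|.
\end{equation*}
So the Case~I test is passed and Case~II cannot be triggered. This holds even if $x^{k+1}=x^k$, since then both sides vanish and the non-strict inequality holds.

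Each occurrence of Case~II multiplies $\beta$ by $\gamma$. Because $\beta$ never decreases, after $m$ occurrences of Case~II (counted over the whole run, across all outer iterations) we have $\beta\ge\gamma^m\beta_0$. As soon as $\gamma^m\beta_0\ge 2L/\rho$, Case~II never happens again. Therefore the total number of Case~II events is at most
\begin{equation*}
\max\left\{0,\ \left\lceil \log_\gamma\frac{2L}{\rho\beta_0}\right\rceil\right\},
\end{equation*}
a finite bound independent of $k$. Two consequences follow:
\begin{enumerate}
\item[(i)] each inner rejection loop terminates, so every outer iteration $k$ is well defined;
\item[(ii)] after the last rejection, only Case~I is executed, so $\beta_k$ stays constant for all sufficiently large $k$.
\end{enumerate}

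\textbf{Main obstacle.} The only delicate point is interpreting $\|x^{k+1}-x^k\|_{\Sigma^2}$ as $\beta_k\|x^{k+1}-x^k\|$, which is valid since $\Sigma=\beta_k{\rm I}$. We must also make sure the count of rejections is global, not per iteration. This is handled by the monotonicity of $\beta$: $\beta$ is never reset, so rejections accumulate multiplicatively over the entire run. The argument needs no inexactness assumptions on the subproblems, since the bound above holds for any candidate $x^{k+1}$.
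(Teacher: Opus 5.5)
Your proof is correct and follows essentially the same route as the paper. Monotonicity is read off Step~3, and finiteness of Case~II comes from the global Lipschitz continuity of $P_J$, which makes the acceptance test automatic once $\rho\beta_k/2\ge L$. The paper phrases this by contradiction ($\beta_k\to\infty$ would contradict Lipschitz continuity), whereas you give the direct version with the explicit bound $\lceil\log_\gamma(2L/(\rho\beta_0))\rceil$ on the total number of rejections, which also shows that each inner rejection loop terminates.
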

\begin{proof}
    It follows from Step 3 of Algorithm \ref{adaptive-ALM} that $\{\beta_k\}$ is non-decreasing. Next, we prove the remaining claim by contradiction. Suppose that Case II in Step 3 occurs infinitely many times, then we have $\beta_k\to\infty$. In particular, as $\beta_k$ tends to infinity, there still holds $\|P_J(x^{k+1})-P_J(x^k)\|>\frac{\rho\beta_k}{2}\|x^{k+1}-x^k\|$. However, $P_J$ is assumed to be globally Lipschitz continuous by Assumption \ref{ass1} (v), which contradicts $\beta_k\to\infty$. Hence, Case II in Step 3 can only happen finitely many times, and thus $\beta_k$ remains constant for sufficiently large $k$.
\end{proof}

\begin{theorem}
    Suppose that Assumption \ref{ass1} holds, the dual step-size $\tau\in(0,2)$, and the error sequence $\{d_k\}$ defined in (\ref{sys1}) satisfies $\|d_k\|\leq \epsilon_k$ with $\sum_{k=1}^\infty \epsilon_k<\infty$. Then, for $\{(v^k,\lambda^k)\}$, with $v^k:=(z^k,x^k)$, generated by Algorithm \ref{adaptive-ALM}, we have 
\begin{enumerate}
    \item[(a).] the sequence $\{(v^k,\lambda^k)\}$ converges weakly to a variational KKT point of the GNEP (\ref{sGNEP-Eq}).
    \item[(b).] if the pseudo-gradient operator $P_J$ is strongly monotone, then the sequence $\{x^k\}$ converges strongly to the unique normalized Nash equilibrium.
\end{enumerate}
\end{theorem}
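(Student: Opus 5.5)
We reduce the practical scheme to the analysis of Algorithm \ref{ALM} from some finite iteration onward. By Lemma \ref{beta: stay constant}, Case II of Step 3 occurs only finitely often. Hence there is an index $k_0$ with $\beta_k\equiv\bar\beta$ for all $k\ge k_0$. From then on every iterate is accepted, so Algorithm \ref{adaptive-ALM} coincides with Algorithm \ref{ALM} run with the fixed operators $\Sigma_\nu=\bar\beta{\rm I}$, started at $(z^{k_0},x^{k_0},x^{k_0-1},\lambda^{k_0})$. Weak convergence and strong convergence of $\{x^k\}$ are asymptotic properties, so it suffices to analyze this tail.

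The difficulty is that $\bar\beta$ need not satisfy $\bar\beta>2L$, so Theorem \ref{Convergence-Th} cannot be quoted verbatim. The plan is to redo the proof of Theorem \ref{Th-ineq} with the global Lipschitz constant $L$ replaced by the acceptance test of Case I. For all $k\ge k_0$ that test gives $\|P_J(x^{k+1})-P_J(x^k)\|\le \frac{\rho\bar\beta}{2}\|x^{k+1}-x^k\|$.

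\begin{itemize}
\item In (\ref{ineq4}), estimate $\langle P_J(x^{k})-P_J(x^{k-1}),x^{k+1}-x^k\rangle$ from below by $-\frac{\rho\bar\beta}{4}\left(\|x^k-x^{k-1}\|^2+\|x^{k+1}-x^k\|^2\right)$.
\item Accordingly, redefine $\Phi$ by replacing $L$ with $\hat L:=\rho\bar\beta/2$, i.e.\ with the term $\hat L\|x^k-x^{k-1}\|^2$.
\item The descent inequality of Theorem \ref{Th-ineq}(a) then holds with right-hand side $\frac12\|v^k-v^{k+1}\|^2_{\mathcal T}+\frac12\|x^{k+1}-x^k\|^2_{(1-\rho)\bar\beta{\rm I}}+(\frac1\tau-\frac12)\frac{1}{\tau\sigma}\|\lambda^k-\lambda^{k+1}\|^2$. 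This is strictly positive definite in the $x$-part because $\rho<1$; the condition $\Sigma\succ 2L{\rm I}$ is exactly replaced by $\bar\beta>2\hat L=\rho\bar\beta$.
\item For nonnegativity in part (b), bound $-2\langle P_J(x^k)-P_J(x^{k-1}),x^k-x^*\rangle\ge -2\hat L\|x^k-x^{k-1}\|\|x^k-x^*\|$ using the accepted test at step $k-1$. This gives $\Phi\ge \|v^k-v^*\|^2_{\mathcal T+\widetilde\Sigma-\hat L I_x}+\frac{1}{\tau\sigma}\|\lambda^k-\lambda^*\|^2$, and $\bar\beta-\hat L=\bar\beta(1-\rho/2)>0$.
\end{itemize}

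With these modified inequalities, the proofs of Lemma \ref{bounded-iterate} and Theorem \ref{Convergence-Th} go through essentially line by line for $k\ge k_0$. We get boundedness, summability of the successive differences, and $R_k\to0$. The bound $\|P_J(x^{k+1})-P_J(x^k)\|\to0$ now follows either from the test or from global Lipschitz continuity. Weak limit points are KKT points by the graph closedness of the maximal monotone $F$ (Lemma \ref{operatorF-prop}). The Fej\'er-type argument via Theorems \ref{thw} and \ref{th: weak-converge} then yields weak convergence of the whole sequence, which is (a). Part (b) follows as in Theorem \ref{Convergence-Th}(c): strong monotonicity gives $c\|x^{k+1}-\bar x\|^2\le\langle R_k,(v^{k+1}-\bar v,\lambda^{k+1}-\bar\lambda)\rangle\to0$.

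The main obstacle is the nonnegativity and lower bound for the modified $\Phi$. In the original proof this used $\|P_J(x^k)-P_J(x^{k-1})\|\le L\|x^k-x^{k-1}\|$ together with $\Sigma\succ 2L{\rm I}$. Here it must be justified through the acceptance test, which only applies for $k-1\ge k_0$, so the lower bound holds only from index $k_0+1$ on. This suffices, since the summation and quasi-Fej\'er arguments can start there; $\Phi_{k_0+1}$ is simply a finite constant. One must also check that the weak-convergence step uses a norm $\mathcal T+\widetilde\Sigma$ with $\widetilde\Sigma=\bar\beta{\rm I}$ on $x$, which remains strongly monotone.
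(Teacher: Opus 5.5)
Your proposal is correct and follows the paper's own proof: freeze $\beta_k=\bar\beta$ via Lemma \ref{beta: stay constant}, then rerun Theorem \ref{Th-ineq}, Lemma \ref{bounded-iterate} and Theorem \ref{Convergence-Th} with $L$ replaced by $\rho\bar\beta/2$; this is legitimate because those arguments use the Lipschitz bound only along consecutive iterates, which is exactly what the acceptance test provides. Your explicit checks fill in details the paper leaves implicit: that $\bar\beta>\rho\bar\beta$ takes over the role of $\Sigma_\nu\succ 2L{\rm I}$, and that the tail can start at $k_0$ (the test in fact already holds at every accepted iterate, since $\beta_k\le\bar\beta$).
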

\begin{proof}
    From Lemma \ref{beta: stay constant}, we know that for sufficiently large $k$, there exists $\bar{\beta}$ such that $\beta_k=\bar{\beta}$, and $\|P_J(x^{k+1})-P_J(x^k)\|\leq\frac{\rho\bar{\beta}}{2}\|x^{k+1}-x^k\|$. Hence, the proof proceeds in the same way as Theorem \ref{Convergence-Th} (with $L$ replaced by $\rho\bar{\beta}/2$).  
\end{proof}

\begin{remark}
    Compared with the KKT-residual-based adaptive parameter choice strategies in \cite{EC-21,WWW-25}, our method relies on the pseudo-gradient difference along the primal iterates. This typically yields a much smaller proximal operator sequence and potentially improves the practical convergence rate, which is also supported by our numerical experiments.
\end{remark}

\section{Application to Risk-neutral PDE-constrained GNEPs}\label{Sec: App}
In this section, we apply Algorithm \ref{adaptive-ALM} to a class of risk-neutral PDE-constrained generalized Nash equilibrium problems, which have many applications and have been studied in \cite{GHS-23}. This application serves to illustrate how the proposed abstract framework can be implemented for a specific problem posed in infinite-dimensional spaces.
\subsection{Problem Formulation}
Consider a game involving $N$ players. The optimization problem faced by the $i$-th player is given by:
\begin{equation}\label{probP}\tag{PE}
    \begin{aligned}
        \min_{u_i}\quad &\frac{1}{2}\mathbb{E}_\mathbb{P}[\|y-y_d^i\|^2_{L^2(D)}]+\frac{\nu_i}{2}\|u_i\|^2_{L^2(D)}\\
        \mbox{s.t.}\quad &{\bf{A}}(\omega)y=\sum_{j=1}^N{\bf{B}}_j(\omega)u_j+f(\omega,x),\quad\mathbb{P}-a.s.,\\
        &y\geq\psi,\quad u_i\in U_{ad}^i:=\{u\in L^2(D)\mid a_i\leq u\leq b_i, a.e. \text{ in } D\}, 
    \end{aligned}
\end{equation}
where $D$ is an open bounded set, the random linear operators ${\bf{A}}: \Omega\to\mathcal{L}(H_0^1(D)\cap H^2(D),L^2(D))$ and ${\bf{B}}_j: \Omega\to\mathcal{L}(L^2(D),L^2(D))$ are defined as 
\begin{equation*}
\begin{aligned}
    &\langle {\bf{A}}(\omega)y,v\rangle:=\int_{D}A(x,\omega)\nabla y\cdot\nabla vdx,\quad \text{for all } v\in H_0^1(D)\cap H^2(D),\\
    &\langle {\bf{B}}_j(\omega)u_j,v\rangle:=\int_D [B_j(\omega)u_j]\cdot vdx,\quad \text{for all } v\in L^2(D).
\end{aligned}
\end{equation*}
In addition, the coefficient maps $A: D\times\Omega\to\mathbb{R}$ and $B_j: \Omega\to\mathcal{L}(L^2(D),L^2(D))$ satisfy the regularity conditions given in Assumption 1 of \cite{GHS-23}. Moreover, we also assume the term $f\in L_\mathbb{P}^\infty(\Omega;L^2(D))$, the functions $y_d^i\in L^2(D)$ with $i=1,2,\ldots,N$, and $\psi\in C(\overline{\Omega\times D})$ are given functions. The existence and regularity properties of the normalized KKT points of this problem have been established in Theorem 5 and Theorem 6 of \cite{GHS-23}.
\subsection{Implementation of Algorithm \ref{adaptive-ALM}}\label{Imple}
In this subsection, we detail the implementation of Algorithm \ref{adaptive-ALM} for Problem (\ref{probP}). Without loss of generality, the term $f$ is assumed to be zero; the case $f\neq 0$ can be addressed by the superposition principle. Define the control-to-state operator $S$, which is induced by the linear elliptic random PDE constraints of Problem (\ref{probP}). By Proposition 1 in \cite{GHS-23}, the operator $S$ is well-defined and the state space can be restricted to the Bochner space $L_\mathbb{P}^\infty(\Omega;H_0^1(D)\cap H^2(D))$. Since $H_0^1(D)$ is densely and continuously embedded into $L^2(D)$, the pointwise inequality constraint $y\ge \psi$ can be equivalently formulated in the Hilbert space $L_{\mathbb{P}}^{2}(\Omega;L^{2}(D))$.
 Then, Problem (\ref{probP}) can be rewritten as  
\begin{equation*}
    \begin{aligned}
        \min_{u_i}\quad &\frac{1}{2}\mathbb{E}_\mathbb{P}[\|S(u_i,u_{-i})-y^i_d\|^2_{L^2(D)}]+\frac{\nu_i}{2}\|u_i\|^2_{L^2(D)},\\
        \mbox{s.t.}\quad&S(u_i,u_{-i})\geq\psi\quad\mathbb{P}-a.s.,\quad a_i\leq u_i\leq b_i\quad a.e. \text{ in } D,
    \end{aligned}
\end{equation*}
for $i=1,2,\ldots,N$.
To formulate this problem in the form of Problem (\ref{sGNEP-Eq}), we define
\begin{equation*}
\begin{aligned}
    &J_i(u_i,u_{-i}):=\frac{1}{2}\mathbb{E}_{\mathbb{P}}[\|S(u_i,u_{-i})-y_d^i\|^2_{L^2(D)}]+\frac{\nu_i}{2}\|u_i\|^2_{L^2(D)},\quad \phi_i(u_i):=I_{U_{ad}^i}(u_i),\\
    &\mathcal{K}:=\{y\in H\mid y\geq\psi\quad \mathbb{P}-a.s.\},\quad u:=(u_1,u_2,\dots,u_N),
\end{aligned}
\end{equation*}
where $I_{U_{ad}^i}$ is the indicator function of $U_{ad}^i$,
with the spaces given by \\
$H:=L_\mathbb{P}^2(\Omega;L^2(D))$, $Y:=L_\mathbb{P}^\infty(\Omega;H_0^1(D)\cap H^2(D))$, and $X=(L^2(D))^N$.

Moreover, for each $i=1,2,\ldots,N$, we define the linear operator $S_i: L^2(D)\to L^2(\Omega;L^2(D))$ as the solution operator of the random PDE ${\bf{A}}(\omega)y={\bf{B}}_iu_i$. By the linearity of the PDE constraint, there holds $S(u_1,u_2,\ldots,u_N)=\sum_{i=1}^NS_i(u_i)$.

Then, the original problem (\ref{probP}) can be rewritten as follows:\\
      $  \min_{(u_i,z_1,z_{2,i})} J_i(u_i,u_{-i})+I_{\mathcal{K}}(z_1)+\phi_i(z_{2,i})\quad
        \mbox{ s.t. } \sum_{i=1}^NS_iu_i-z_1=0, u_i-z_{2,i}=0$.\\
To be consistent with the notation of Problem (\ref{sGNEP-Eq}), we define $\mathcal{A}u:=\sum_{i=1}^NS_iu_i$ and define the operator $\mathcal{B}$ by (\ref{def-Bpsi}) accordingly.

To apply Algorithm \ref{adaptive-ALM}, we first verify the satisfaction of Assumption \ref{ass1}. Note that items (i), (ii), (iii), (vi) and the global Lipschitz continuity of the pseudo-gradient of Assumption \ref{ass1} are satisfied by the structure of the problem and the regularity properties of $S$ discussed in Corollary 1 of \cite{GHS-23}. Due to the fact that $\langle P_J(u)-P_J(\tilde{u}),u-\tilde{u} \rangle_X=\mathbb{E}_\mathbb{P}[\|u-\tilde{u}\|^2_{S^*S+\operatorname{diag}(\nu_1\rm{I},\nu_2\rm{I},\ldots,\nu_N\rm{I})}]\geq 0$, we validate Assumption \ref{ass1} (v). In addition, Assumption \ref{ass1} (iv) is standard in PDE optimal control context; see Section 2.7 of \cite{MR-09} for more discussions. 

Notice that Algorithm \ref{adaptive-ALM} is posed in infinite-dimensional spaces, and must be discretized for computation. Our numerical implementation follows the ``first-optimize-then-discretize" paradigm, which consists of first designing the algorithm in infinite-dimensional spaces, and then discretizing the scheme to obtain a practical numerical method. The discretization of the corresponding spaces has two components. For the spatial discretization, we employ the piecewise linear finite element method on a uniform mesh. For the random field, we apply a standard Monte Carlo approximation. Specifically, given i.i.d. samples $\{\omega_m\}_{m=1}^M$, the expectation operator $\mathbb{E}_{\mathbb{P}}[\cdot]$ is approximated by the empirical average, i.e., $\mathbb{E}_{\mathbb{P}}[g(\omega)]\approx\frac{\sum_{m=1}^Mg(\omega_m)}{M}$. 
\subsection{Numerical Experiments}
In this section, we present numerical results to demonstrate the efficiency of the proposed Algorithm \ref{adaptive-ALM}. All algorithms were implemented in MATLAB R2023a and executed on a laptop equipped with an Intel(R) Core(TM) i7-12650H CPU (2.30GHz) and 16GB of RAM.

It is important to note that the numerical literature on risk-neutral GNEP constrained by PDEs is still limited. To the best of our knowledge, the most closely related method is the Moreau-Yosida regularization method proposed in in \cite{GHS-23} for Problem (\ref{probP}). However, \cite{GHS-23} focuses mainly on theoretical analysis, and its implementation faces substantial challenges. As discussed in Section 5.1 of \cite{GHS-23}, at each iteration their method requires solving a coupled risk-neutral Nash equilibrium problem (NEP) with high accuracy. This subproblem is computationally expensive and often ill-conditioned due to the coupling of all players and the presence of PDE constraints. Moreover, the numerical results reported in Fig. 1 of \cite{GHS-23} suggest that the method may be computationally demanding. In view of these computational difficulties and the fundamentally different algorithmic structure, we do not compare our method with \cite{GHS-23}.

Recall the KKT residual defined in (\ref{KKT_residual_definition}). The algorithms are terminated when the residual denoted as $\eta_{KKT}$ falls below a given tolerance. Specifically, we require:
\begin{center}
$\eta_{KKT}:=\max\{\eta_p,\eta_d,\eta_{I_1},\eta_{I_2}\}\leq tol=10^{-5}$,
\end{center}
where the residual components are given as:
\begin{equation*}
    \begin{aligned}
    &\eta_p:=\max\{\sqrt{\mathbb{E}_\mathbb{P}(\|S(u^k)-z_1^k\|^2)},\|u^k-z_2^k\|\}, \eta_{d}:=\|P_J(u^k)+S^*\lambda^k_1+\lambda_2^k\|,\\
    &\eta_{I_1}:=\sqrt{\mathbb{E}_\mathbb{P}(\|z_1^k-\max(z_1^k+\lambda_1^k,\psi)\|^2)}, \eta_{I_2}:=\|z_2^k-\operatorname{Pr}_{U_{ad}}(z_2^k+\lambda_2^k)\|,
    \end{aligned}
\end{equation*}
with $\operatorname{Pr}_{U_{ad}}(\cdot)$ denoting the projection operator onto the set $U_{ad} := \Pi_{i=1}^N U_{ad}^i$. 
The maximum number of iterations is set to $20,000$.

For all examples, the initial guess is set to $(u^0,z^0,\lambda^0)=0$. The initial proximal operators in Algorithm \ref{adaptive-ALM} are chosen as $\Sigma_\nu := \mathrm{I}$ for $\nu=1,\dots,N$, with an increasing factor $\gamma = 1.2$ and the parameter $\rho=0.99$. In our implementation, the subproblems in the backward Gauss-Seidel sweeping are solved exactly, while we decide whether to skip the forward Gauss-Seidel sweeping by checking the inexactness condition with error tolerance $\epsilon_k \leq k^{-1.8}$ (as described in Theorem \ref{Convergence-Th}). To validate the efficiency of the proposed algorithm, we compare the performance of the following algorithms:
\begin{enumerate}
    \item[(i)] \textbf{isGS-APALM-$\tau$:} The proposed Algorithm \ref{adaptive-ALM}, implemented with dual step-sizes $\tau \in \{1, 1.618, 1.9\}$.
    \item[(ii)] \textbf{sGS-APALM-$\tau$:} The exact variant of Algorithm \ref{adaptive-ALM} where all the subproblems are solved exactly. We include this benchmark (using the same step-sizes $\tau \in \{1, 1.618, 1.9\}$) specifically to demonstrate the computational advantage of the proposed inexact strategy.
    \item[(iii)] \textbf{multi-ADMM:} The extension of multiple-block ADMM-type method proposed in \cite{EC-21}. The main motivation for this comparison is that this method is also designed for GNEPs in general settings and shares some similar structural features with Algorithm \ref{adaptive-ALM}. Moreover, numerical results in \cite{EC-21} have shown that classical forward–backward schemes often converge more slowly in practice than this type of method.
\end{enumerate}

For clarity, we introduce some notation to represent the objective values and the violation of the probability constraints:
\begin{center}
$J_i:=\frac{1}{2}\mathbb{E}_\mathbb{P}[\|y-y^i_d\|^2_{L^2(D)}]+\frac{\nu_i}{2}\|u_i\|^2_{L^2(D)}$, $i=1,2,\ldots,N$,\\
$\mathbb{P}_{vio}:=\mathbb{P}(\{\omega\in\Omega\mid\int_D \min(y(x,\omega)-\psi,0)dx\neq0\})$.
\end{center}
We also report the KKT residual denoted as $\eta_{KKT}$ and the final proximal operator $\beta{\rm{I}}$ in our numerical results.

\textbf{Example 1.} We first consider the example given in Section 5.3.2 of \cite{GHS-23}, which involves $N=2$ players.
\begin{equation*}
    \begin{aligned}
    \min_{u_i}\quad &\frac{1}{2}\mathbb{E}_\mathbb{P}[\|y-y^i_d\|^2_{L^2(D)}]+\frac{\nu_i}{2}\|u_i\|^2_{L^2(D)}\\
        \mbox{s.t.}\quad &-\nu(\omega)\partial_{xx}y(\omega,x)=\sum_{i=1}^2u_i+f(\omega,x),\text{ in } D, \text{ for }a.e. \omega\in\Omega,\\
        &y(\omega,0)=d_0(\omega), y(\omega,1)=d_1(\omega),\\
        &y(\omega,x)\geq 0 \quad\mathbb{P}-a.s.,\quad u_i\in U_{ad}^i:=\{u\in L^2(D)\mid a_i\leq u\leq b_i, a.e. \text{ in } D\},
    \end{aligned}
\end{equation*}
where the domain $D=(0,1)$, the regularization parameters $\nu_1=\nu_2=10^{-3}$, and the parameters in the random PDE are defined by
        $\nu(\omega):=\max(0.05,\xi_1(\omega)),\quad f(\omega,x):=\frac{2\xi_2(\omega)-1}{10}$,
        $d_0(\omega):=0.75+\frac{2\xi_3(\omega)-1}{1000},\quad d_1(\omega):=0.5+\frac{\xi_4(\omega)}{1000}$,
where the random variables $\xi_i: \Omega\to\mathbb{R}, i=1,2,3,4$ are i.i.d. and uniformly distributed on $[0,1]$. The target state functions are set by $y_d^1:=\sin(\frac{50x}{\pi})$ and $y_d^2:=\cos(\frac{50x}{\pi})$, and the control bounds are taken to be $a_1=a_2=-1$ and $b_1=b_2=1$.

We set the Monte Carlo sample size to $M=2000$ and test the numerical behavior of Algorithm \ref{adaptive-ALM} on different mesh sizes $h=2^{-i}, i=5,6,7,8$. As discussed in the previous subsection \ref{Imple}, updating the primal variables requires solving a series of linear equations, which are obtained by discretizing the linear operators $\sigma_1\mathbb{E}_{\mathbb{P}}(S_\nu^*S_\nu)+\sigma_2{\rm{I}}+\Sigma_\nu$ for $\nu=1,2,\ldots,N$. Meanwhile, updating the dual variables requires computing the corresponding states, which amounts to solving the elliptic PDEs for each sampled parameter $\{\omega_m\}_{m=1}^M$. This entails solving $M$ linear equations. In our implementations, these linear equations are directly solved by the default backslash `\textbackslash' in MATLAB. The penalty parameters are chosen as $\sigma_1=10^2$ for the constraint $y=z_1$ and $\sigma_2=1$ for the constraint $u=z_2$. For the implementation of ``multi-ADMM", we follow all the details discussed in Section 6.2 of \cite{EC-21}, and select the same penalty parameters as those of our designed method. 

We report detailed numerical results in Table \ref{tab1}. It is observed that the iteration numbers of the proposed method do not increase greatly as the mesh size becomes finer. We also note that ``multi-ADMM" fails to reach the prescribed KKT tolerance within the iteration limit, and the final proximal term is much larger than that of our method. The main reasons are that (i) the convergence of ``multi-ADMM" requires a sufficiently large proximal term, as shown in Theorem 4.6 and Section 4.3 of \cite{EC-21}; (ii) the adaptive parameter choice strategy designed in \cite{EC-21} is based on KKT residuals, more specifically, the proximal operators are enlarged when the KKT residual fails to decrease. However, the KKT residual may not be decreasing even when the iteration number is large. By comparison, the convergence of our proposed Algorithm \ref{adaptive-ALM} requires a theoretically smaller proximal term. More specifically, the adaptive strategy uses only local differences of the pseudo-gradient.

The CPU times further show that (i) solving each subproblem inexactly, and (ii) using a larger step-size both help us improve the practical efficiency. An interesting observation is that in almost all cases the inexact variants require much fewer iterations than their exact counterparts. Similar results have also been observed when using inexact sGS-based ADMM to handle large-scale SDPs, which is reported in Table 3 of \cite{LXD-21} in detail. In our numerical implementations, we note that in the early phase, each subproblem can be solved to a loose tolerance, and thus some computations of the forward sweeping procedure can be skipped. As the iterations proceed, the required accuracy of each subproblem solver becomes more stringent, which makes the skipped computations no longer admissible, and the algorithm gradually transitions to the exact scheme. In this sense, the early inexact iterations can be viewed as a warm start for the subsequent exact phase, which helps explain why the iteration number of the inexact algorithm is typically smaller in our numerical results.

The other contributing factor is that our theoretical results allow the dual step-size $\tau$ to be chosen in $(0,2)$. Comparing the CPU times of different step-size choices, we notice that both ``isGS-APALM-1.9" and ``isGS-APALM-1.618" improve the efficiency of ``isGS-APALM-1", except for the case $h=\frac{1}{2^7}$. This aligns with our theoretical results and shows that choosing a larger dual step-size generally accelerates the practical convergence rate. Similar numerical behavior is also reported in Section 5.1 of \cite{LXD-21}, but for completely different problems.

The values of the objective functions $J_1$ and $J_2$, together with the state-constraint feasibility indicator $\mathbb{P}_{vio}$, imply that the obtained solutions are reliable. In particular, the value of $\mathbb{P}_{vio}$ shows that the probability of violation of the state constraints is acceptable, since the usual tolerance for feasibility violation is set by $5\%$ (discussed in Section 5 of \cite{GHS-23}). The numerical results are shown in Figure \ref{fig1}.
\begin{table}[htbp]
	 	\caption{Numerical results of Algorithm \ref{adaptive-ALM} for Example 1}
	 	\centering
   {\tiny
	 	\begin{tabular}{|c|c |c| c| c| c| c|c |}
	 		\hline
	 		$h$&Algorithm &Iter & CPU(sec) &$J_1$&$J_2$&$\mathbb{P}_{vio}$&$\eta_{KKT}\mid\beta$\\
            \hline  
            \multirow{7}{*}{$\frac{1}{2^5}$}&isGS-APALM-1.9&5925&17.57&$3.27\times 10^{-1}$& $4.01\times 10^{-1}$&$0.05\%$&$9.9\times 10^{-6}\mid 1.2$ \\
            \multirow{7}{*}{~}&isGS-APALM-1.618&5823&17.01&$3.27\times 10^{-1}$& $4.01\times 10^{-1}$&$0.05\%$&$9.9\times 10^{-6}\mid 1.2$ \\
             \multirow{7}{*}{~}&isGS-APALM-1&7302& 22.92&$3.27\times 10^{-1}$& $4.01\times 10^{-1}$&$0.05\%$&$9.9\times 10^{-6}\mid 1.0$ \\
             \multirow{7}{*}{~}&sGS-APALM-1.9&7290&22.11&$3.27\times 10^{-1}$& $4.01\times 10^{-1}$&$0.05\%$&$9.9\times 10^{-6}\mid 1.0$ \\
            \multirow{7}{*}{~}&sGS-APALM-1.618&7590&23.10&$3.27\times 10^{-1}$& $4.01\times 10^{-1}$&$0.05\%$& $9.9\times 10^{-6}\mid 1.2$\\
             \multirow{7}{*}{~}&sGS-APALM-1&7825&24.36&$3.27\times 10^{-1}$& $4.01\times 10^{-1}$&$0.05\%$& $9.9\times 10^{-6}\mid 1.0$\\
             \multirow{7}{*}{~}&multi-ADMM&$-$&$-$&$-$&$-$&$-$& $9.6\times 10^{-3}\mid 17101.1$ \\
            \hline 
            \multirow{7}{*}{$\frac{1}{2^6}$}&isGS-APALM-1.9&6962&22.75&$3.31\times 10^{-1}$& $4.10\times 10^{-1}$&$0.05\%$&$9.8\times 10^{-6}\mid$1.2 \\
            \multirow{7}{*}{~}&isGS-APALM-1.618&7635& 26.66&$3.31\times 10^{-1}$& $4.10\times 10^{-1}$&$0.05\%$&$9.8\times 10^{-6}\mid$1.2 \\
             \multirow{7}{*}{~}&isGS-APALM-1&9144& 30.98&$3.31\times 10^{-1}$& $4.10\times 10^{-1}$&$0.05\%$& $9.9\times 10^{-6}\mid$1.0\\
             \multirow{7}{*}{~}&sGS-APALM-1.9&8984&30.77&$3.31\times 10^{-1}$& $4.10\times 10^{-1}$&$0.05\%$& $9.9\times 10^{-6}\mid$1.2\\
             \multirow{7}{*}{~}&sGS-APALM-1.618&8532&28.88&$3.31\times 10^{-1}$& $4.10\times 10^{-1}$&$0.05\%$&$9.8\times 10^{-6}\mid$1.2 \\
             \multirow{7}{*}{~}&sGS-APALM-1&8181&28.01&$3.31\times 10^{-1}$& $4.10\times 10^{-1}$&$0.05\%$&$9.9\times 10^{-6}\mid$1.0 \\
             \multirow{7}{*}{~}&multi-ADMM&$-$&$-$&$-$&$-$&$-$&$1.0\times 10^{-2}\mid$17212.1 \\
            \hline 
            \multirow{7}{*}{$\frac{1}{2^7}$}&isGS-APALM-1.9&7441&109.45&$3.32\times 10^{-1}$& $4.15\times 10^{-1}$&$0.05\%$&$9.8\times 10^{-6}\mid$1.0 \\
             \multirow{7}{*}{~}&isGS-APALM-1.618& 6364 &95.38&$3.32\times 10^{-1}$& $4.15\times 10^{-1}$&$0.05\%$&$9.8\times 10^{-6}\mid$1.44 \\
             \multirow{7}{*}{~}&isGS-APALM-1&6524& 98.82&$3.32\times 10^{-1}$& $4.15\times 10^{-1}$&$0.05\%$& $9.8\times 10^{-6}\mid$1.0 \\
             \multirow{7}{*}{~}&sGS-APALM-1.9&9067&137.36&$3.32\times 10^{-1}$& $4.15\times 10^{-1}$&$0.05\%$&$9.8\times 10^{-6}\mid$1.2  \\
             \multirow{7}{*}{~}&sGS-APALM-1.618&8769 &130.18&$3.32\times 10^{-1}$& $4.15\times 10^{-1}$&$0.05\%$&$9.8\times 10^{-6}\mid$1.0  \\
             \multirow{7}{*}{~}&sGS-APALM-1&9692&141.44&$3.32\times 10^{-1}$& $4.15\times 10^{-1}$&$0.05\%$&$9.8\times 10^{-6}\mid$1.2  \\
             \multirow{7}{*}{~}&multi-ADMM&$-$&$-$&$-$&$-$&$-$&$9.9\times 10^{-3}\mid$17193.1  \\
            \hline
                 \multirow{7}{*}{$\frac{1}{2^8}$}&isGS-APALM-1.9&7054&251.45&$3.31\times 10^{-1}$& $4.14\times 10^{-1}$&$0.05\%$& $9.8\times 10^{-6}\mid$1.0\\
              \multirow{7}{*}{~}&isGS-APALM-1.618& 7158 &256.63&$3.31\times 10^{-1}$& $4.14\times 10^{-1}$&$0.05\%$& $9.8\times 10^{-6}\mid$1.2\\
             \multirow{7}{*}{~}&isGS-APALM-1&7689& 275.00&$3.31\times 10^{-1}$& $4.14\times 10^{-1}$&$0.05\%$& $9.8\times 10^{-6}\mid$1.0\\
             \multirow{7}{*}{~}&sGS-APALM-1.9&9019&327.06&$3.31\times 10^{-1}$& $4.14\times 10^{-1}$&$0.05\%$&$9.8\times 10^{-6}\mid$1.2 \\
             \multirow{7}{*}{~}&sGS-APALM-1.618& 8243 &295.06&$3.31\times 10^{-1}$& $4.14\times 10^{-1}$&$0.05\%$& $9.8\times 10^{-6}\mid$1.0\\
             \multirow{7}{*}{~}&sGS-APALM-1&8511&307.39&$3.31\times 10^{-1}$& $4.14\times 10^{-1}$&$0.05\%$&$9.8\times 10^{-6}\mid$1.0 \\
             \multirow{7}{*}{~}&multi-ADMM&$-$&$-$&$-$&$-$&$-$& $1.0\times 10^{-2}\mid$17196.1\\
             \hline
	 	\end{tabular}}%
	 	\label{tab1}
	 \end{table}

\begin{figure}[htbp]
	 	\centering
	 	\subfigure[Optimal control $u_1$]{
	 		\begin{minipage}[t]{4.0cm}
	 			\centering
	 			\includegraphics[width=\linewidth]{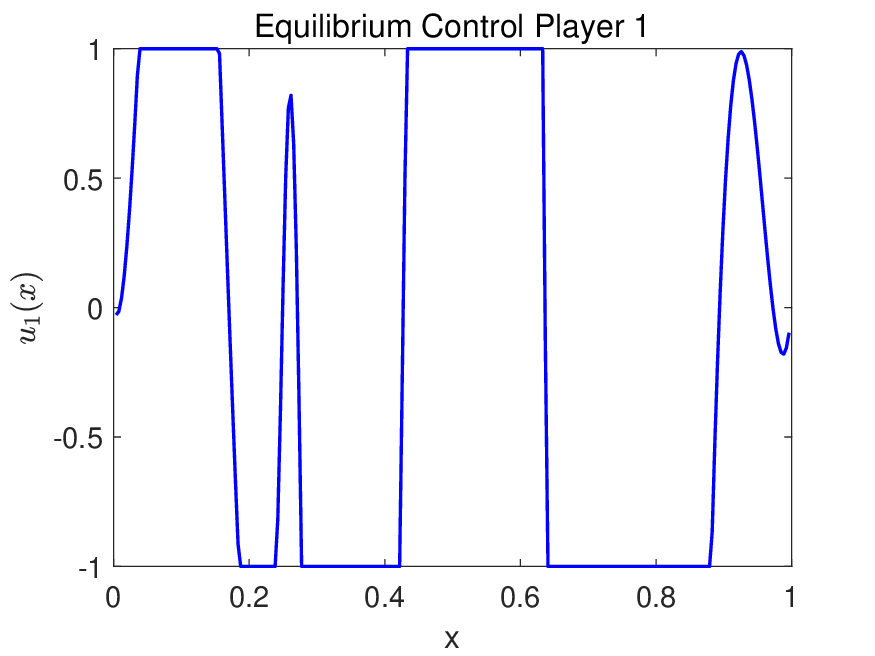}	
	 	\end{minipage}}
	 	\subfigure[Optimal control $u_2$]{
	 		\begin{minipage}[t]{4.0cm}
	 			\centering
	 			\includegraphics[width=\linewidth]{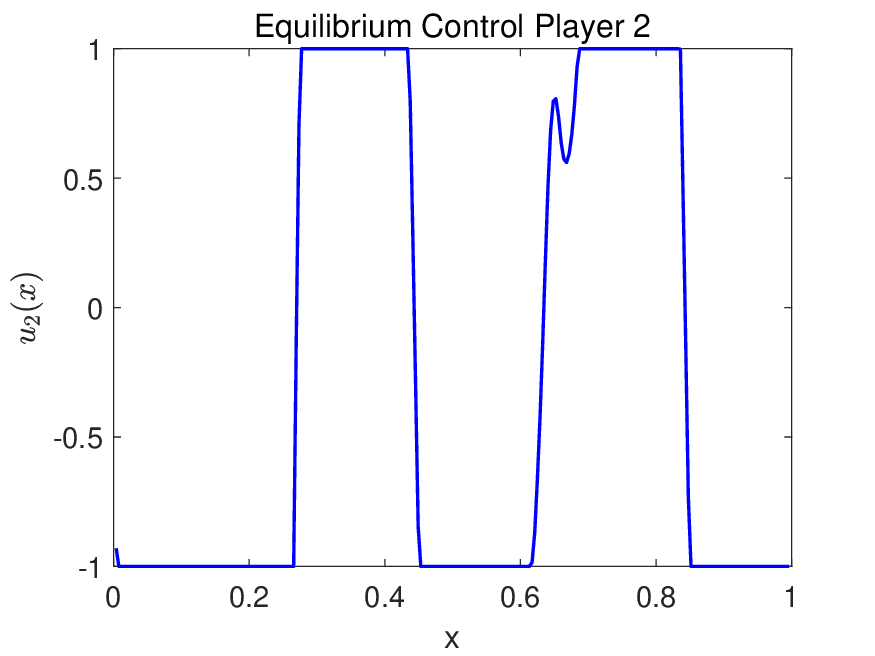}
	 	\end{minipage}}
	 	\subfigure[Sample states $y(\cdot,\omega)$]{
	 		\begin{minipage}[t]{4.0cm}
	 			\centering
	 			\includegraphics[width=\linewidth]{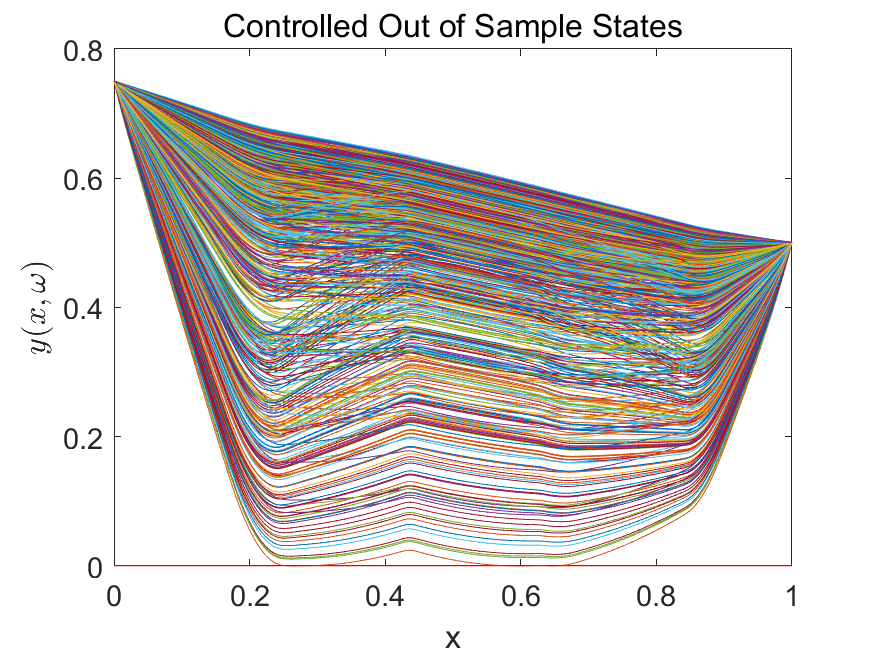}	
	 	\end{minipage}}
	 	\caption{\centering Numerical solution obtained by Algorithm \ref{adaptive-ALM} with $h=\frac{1}{2^8}$ of Example 1}
	 	\label{fig1}
	 \end{figure}

\textbf{Example 2.} Then, we consider a general case with $N=2$ players, where the control region of each player is a subset of $D$. The problem is given by:
\begin{equation*}
    \begin{aligned}
    \min_{u_i}\quad &\frac{1}{2}\mathbb{E}_\mathbb{P}[\|y-y^i_d\|^2_{L^2(D)}]+\frac{\nu_i}{2}\|u_i\|^2_{L^2(D)}\\
        \mbox{s.t.}\quad &-\nu(\omega)\partial_{xx}y(\omega,x)=\sum_{i=1}^2B_iu_i+f(\omega,x),\text{ in } D, \text{for } a.e. \omega\in\Omega,\\
        &y(\omega,0)=d_0(\omega), y(\omega,1)=d_1(\omega), \\
        &y(\omega,x)\geq 0 \quad\mathbb{P}-a.s.,\quad u_i\in U_{ad}^i:=\{u\in L^2(D)\mid a_i\leq u\leq b_i, a.e. \text{ in } D\},
    \end{aligned}
\end{equation*}
where $B_1u_1:=\chi_{[0,\frac{1}{2})}\cdot u_1$ and $B_2u_2:=\chi_{[\frac{1}{2},1)}\cdot u_2$. The control bounds are given by $a_1=a_2=-\frac{3}{4}$ and $b_1=b_2=\frac{3}{4}$. The other parameters of the problem are defined in the same way as in Example 1.

As in Example 1, we set the penalty parameters by $\sigma_1=10^2$ for the constraint $y=z_1$ and $\sigma_2=1$ for the constraint $u=z_2$. The linear systems encountered are again solved by the default backslash operator `\textbackslash'. Detailed numerical results are reported in Table \ref{tab2}. From Table \ref{tab2}, we observe that for all cases the designed ``isGS-APALM-1.9" has the best performance. Moreover, for both inexact algorithms and their exact counterparts, using a larger dual step-size, i.e., $\tau=1.618$ and $\tau=1.9$, can improve numerical efficiency relative to that of $\tau=1$. This again validates that selecting a larger dual step-size can help us obtain a faster practical convergence rate. Moreover, solving each subproblem inexactly makes the algorithm more flexible and reduces the total computational cost. In addition, we note that ``multi-ADMM" fails to reach the prescribed KKT tolerance within the maximum iteration limit. This is consistent with the theoretical advantages of the proposed method, specifically, (i) the convergence of the designed method is guaranteed under much weaker conditions than those of \cite{EC-21}; (ii) the adaptive strategy enables us to add a relatively small proximal term.

Notice that the resulting iteration counts are mildly affected by the refinement of the mesh sizes. Additionally, the probability of violating the state constraints is well within the typical tolerance for probability constraints (which is usually $5\%$). These observations further indicate that the obtained solutions are satisfactory. The computed results are depicted in Figure \ref{fig2}.     

\begin{table}[htbp]
	 	\caption{Numerical results of Algorithm \ref{adaptive-ALM} for Example 2}
	 	\centering
   {\tiny
	 	\begin{tabular}{|c|c |c| c| c| c| c| c|}
	 		\hline
	 		$h$&Algorithm &Iter & CPU(sec) &$J_1$& $J_2$&$\mathbb{P}_{vio}$&$\eta_{KKT}\mid\beta$\\
             \hline 
            \multirow{7}{*}{$\frac{1}{2^5}$}&isGS-APALM-1.9&7249&10.16&$3.29\times 10^{-1}$& $4.01\times 10^{-1}$&$0.05\%$& $9.9\times 10^{-6}\mid$1.0\\
              \multirow{7}{*}{~}&isGS-APALM-1.618&7478 &11.33 &$3.29\times 10^{-1}$& $4.01\times 10^{-1}$&$0.05\%$& $9.9\times 10^{-6}\mid$1.0\\
             \multirow{7}{*}{~}&isGS-APALM-1&7548& 11.85&$3.29\times 10^{-1}$& $4.01\times 10^{-1}$&$0.05\%$& $9.9\times 10^{-6}\mid$1.0\\
             \multirow{7}{*}{~}&sGS-APALM-1.9&7290&16.03&$3.29\times 10^{-1}$& $4.01\times 10^{-1}$&$0.05\%$& $9.9\times 10^{-6}\mid$1.0\\
             \multirow{7}{*}{~}&sGS-APALM-1.618& 7532& 16.87&$3.29\times 10^{-1}$& $4.01\times 10^{-1}$&$0.05\%$& $9.9\times 10^{-6}\mid$1.0\\
             \multirow{7}{*}{~}&sGS-APALM-1&7686&17.10&$3.29\times 10^{-1}$& $4.01\times 10^{-1}$&$0.05\%$& $9.9\times 10^{-6}\mid$1.0\\
             \multirow{7}{*}{~}&multi-ADMM&$-$&$-$&$-$&$-$&$-$& $1.8\times 10^{-3}\mid$1455.1\\
            \hline 
            \multirow{7}{*}{$\frac{1}{2^6}$}&isGS-APALM-1.9&5730&25.46&$3.31\times 10^{-1}$& $4.11\times 10^{-1}$&$0.05\%$& $9.8\times 10^{-6}\mid$1.0\\
             \multirow{7}{*}{~}&isGS-APALM-1.618&6261 & 31.81&$3.31\times 10^{-1}$& $4.11\times 10^{-1}$&$0.05\%$& $9.8\times 10^{-6}\mid$1.0\\
             \multirow{7}{*}{~}&isGS-APALM-1&8319& 42.13&$3.31\times 10^{-1}$& $4.11\times 10^{-1}$&$0.05\%$& $9.8\times 10^{-6}\mid$1.0\\
             \multirow{7}{*}{~}&sGS-APALM-1.9&5780&29.38&$3.31\times 10^{-1}$& $4.11\times 10^{-1}$&$0.05\%$& $9.8\times 10^{-6}\mid$1.0\\
             \multirow{7}{*}{~}&sGS-APALM-1.618& 6422&34.56 &$3.31\times 10^{-1}$& $4.11\times 10^{-1}$&$0.05\%$& $9.8\times 10^{-6}\mid$1.0\\
             \multirow{7}{*}{~}&sGS-APALM-1&8317&40.27&$3.31\times 10^{-1}$& $4.11\times 10^{-1}$&$0.05\%$& $9.8\times 10^{-6}\mid$1.0\\
             \multirow{7}{*}{~}&multi-ADMM&$-$&$-$&$-$&$-$&$-$& $1.5\times 10^{-3}\mid$1154.1\\
            \hline 
            \multirow{7}{*}{$\frac{1}{2^7}$}&isGS-APALM-1.9&6882&103.44&$3.32\times 10^{-1}$& $4.14\times 10^{-1}$&$0.05\%$& $9.8\times 10^{-6}\mid$1.0\\
              \multirow{7}{*}{~}&isGS-APALM-1.618&7134 & 109.08&$3.32\times 10^{-1}$& $4.14\times 10^{-1}$&$0.05\%$& $9.8\times 10^{-6}\mid$1.0\\
             \multirow{7}{*}{~}&isGS-APALM-1&7879& 121.28&$3.32\times 10^{-1}$& $4.14\times 10^{-1}$&$0.05\%$& $9.8\times 10^{-6}\mid$1.0\\
             \multirow{7}{*}{~}&sGS-APALM-1.9&6980&108.77&$3.32\times 10^{-1}$& $4.14\times 10^{-1}$&$0.05\%$& $9.8\times 10^{-6}\mid$1.0\\
              \multirow{7}{*}{~}&sGS-APALM-1.618&6911 & 106.33&$3.32\times 10^{-1}$& $4.14\times 10^{-1}$&$0.05\%$& $9.8\times 10^{-6}\mid$1.0\\
             \multirow{7}{*}{~}&sGS-APALM-1&7946&131.14&$3.32\times 10^{-1}$& $4.14\times 10^{-1}$&$0.05\%$& $9.8\times 10^{-6}\mid$1.0\\
             \multirow{7}{*}{~}&multi-ADMM&$-$&$-$&$-$&$-$&$-$& $1.7\times 10^{-3}\mid$1157.1\\
            \hline
             \multirow{7}{*}{$\frac{1}{2^8}$}&isGS-APALM-1.9&6216&170.81&$3.34\times 10^{-1}$& $4.18\times 10^{-1}$&$0.05\%$& $9.8\times 10^{-6}\mid$1.0\\
             \multirow{7}{*}{~}&isGS-APALM-1.618& 6981 & 190.87&$3.34\times 10^{-1}$& $4.18\times 10^{-1}$&$0.05\%$& $9.8\times 10^{-6}\mid$1.0\\
             \multirow{7}{*}{~}&isGS-APALM-1&7570& 211.59&$3.34\times 10^{-1}$& $4.18\times 10^{-1}$&$0.05\%$& $9.8\times 10^{-6}\mid$1.0\\
             \multirow{7}{*}{~}&sGS-APALM-1.9&6239&181.39&$3.34\times 10^{-1}$& $4.18\times 10^{-1}$&$0.05\%$& $9.8\times 10^{-6}\mid$1.0\\
             \multirow{7}{*}{~}&sGS-APALM-1.618& 7028& 200.11&$3.34\times 10^{-1}$& $4.18\times 10^{-1}$&$0.05\%$& $9.8\times 10^{-6}\mid$1.0\\
             \multirow{7}{*}{~}&sGS-APALM-1&7509&210.23&$3.34\times 10^{-1}$& $4.18\times 10^{-1}$&$0.05\%$& $9.8\times 10^{-6}\mid$1.0\\
             \multirow{7}{*}{~}&multi-ADMM&$-$&$-$&$-$&$-$&$-$& $1.7\times 10^{-3}\mid$1149.1\\
             \hline
	 	\end{tabular}}%
	 	\label{tab2}
	 \end{table}

 \begin{figure}[htbp]
	 	\centering
	 	\subfigure[Optimal control $u_1$]{
	 		\begin{minipage}[t]{4.0cm}
	 			\centering
	 			\includegraphics[width=\linewidth]{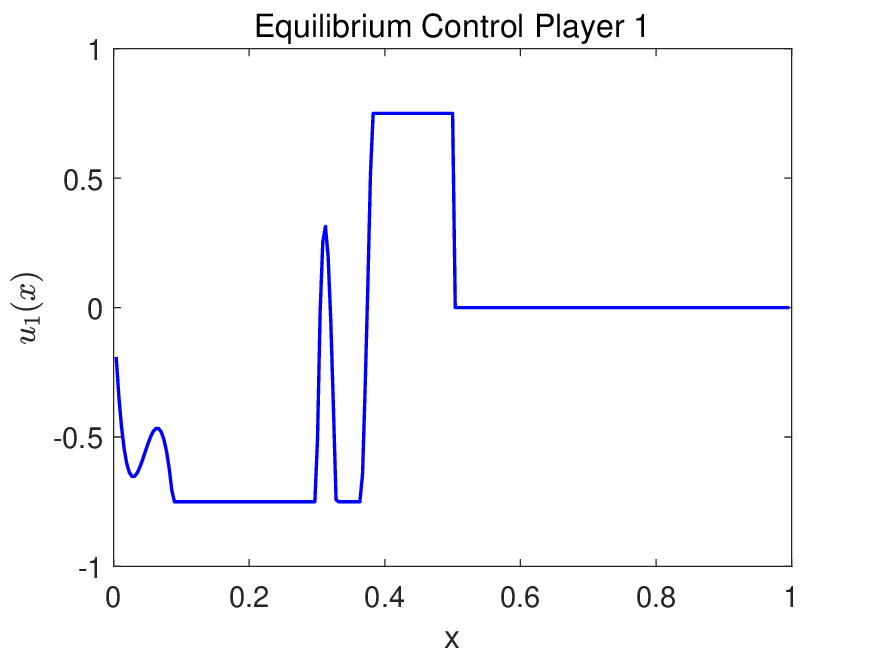}	
	 	\end{minipage}}
	 	\subfigure[Optimal control $u_2$]{
	 		\begin{minipage}[t]{4.0cm}
	 			\centering
	 			\includegraphics[width=\linewidth]{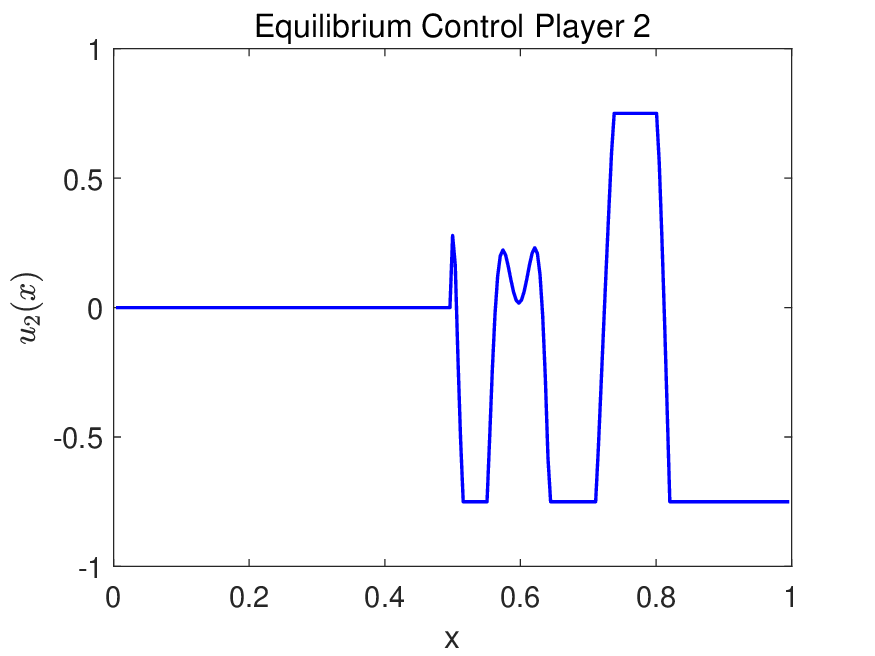}
	 	\end{minipage}}
	 	\subfigure[Sample states $y(\cdot,\omega)$]{
	 		\begin{minipage}[t]{4.0cm}
	 			\centering
	 			\includegraphics[width=\linewidth]{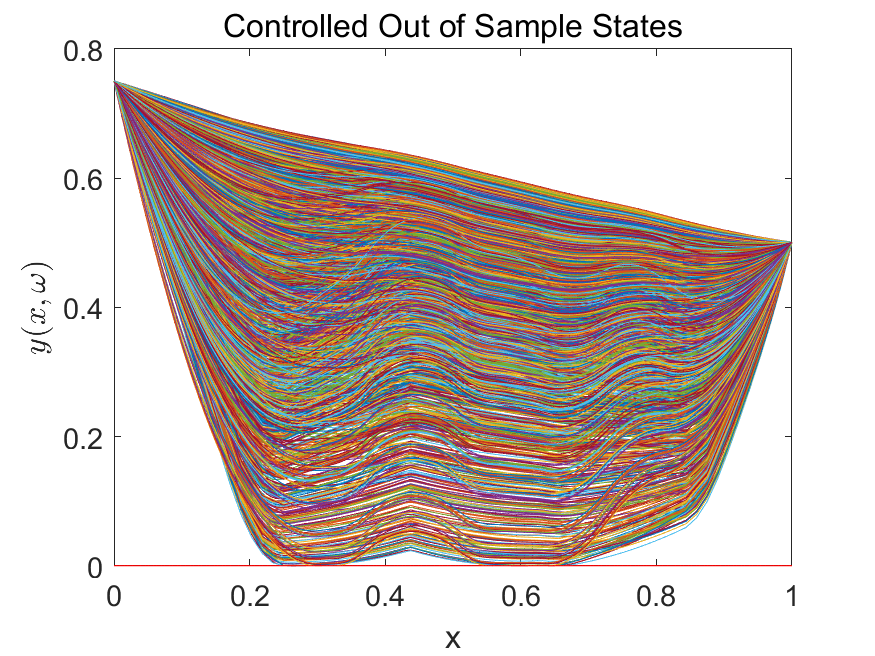}	
	 	\end{minipage}}
	 	\caption{\centering Numerical solution obtained by Algorithm \ref{adaptive-ALM} with $h=\frac{1}{2^8}$ of Example 2}
	 	\label{fig2}
	 \end{figure}
\section{Conclusion}\label{Sec: Con}
In this paper, we proposed a symmetric Gauss-Seidel based alternating proximal augmented Lagrangian method (sGS-APALM) for a class of monotone generalized Nash equilibrium problems with jointly linear constraints. Compared with existing regularization and ALM-type methods, the main advantage of the proposed method is that each iteration eliminates the need to solve complicated Nash equilibrium problems. Instead, it updates the players' strategies alternately by solving a sequence of unconstrained quadratic programs. This yields an efficient and practical iteration scheme. In contrast to existing splitting-based methods, our convergence analysis requires only the pseudo-gradient operator to be monotone and Lipschitz continuous, thereby relaxing the usual theoretical restrictions and preserving the simplicity of the scheme. We then focused on a class of risk-neutral PDE-constrained GNEPs and discussed the practical implementation. Finally, some preliminary numerical results demonstrated the efficiency and effectiveness of our proposed method.

\section*{Acknowledgment}
The authors wish to thank Professor Jong-Shi Pang for his insightful discussions and valuable suggestions on an early draft on this work.

 \bibliographystyle{siamplain}

\bibliographystyle{siamplain}

\end{document}

%% file: ex_shared.tex
\usepackage{lipsum}
\usepackage{amsfonts}
\usepackage{graphicx}
\usepackage{epstopdf}
\usepackage{algorithmic}
\usepackage{subfigure}
\usepackage{multirow}
\usepackage{amsmath}
\usepackage{mathtools}
\usepackage{amssymb}
\ifpdf
  \DeclareGraphicsExtensions{.eps,.pdf,.png,.jpg}
\else
  \DeclareGraphicsExtensions{.eps}
\fi

\newsiamremark{remark}{Remark}
\newsiamremark{hypothesis}{Hypothesis}
\crefname{hypothesis}{Hypothesis}{Hypotheses}
\newsiamthm{claim}{Claim}
\newsiamremark{fact}{Fact}
\crefname{fact}{Fact}{Facts}

\headers{A sGS-APALM for GNEPs in Banach spaces}{Defeng Sun, Hailing Wang, and Wei Zhao}

\title{A symmetric Gauss-Seidel based alternating proximal ALM for generalized Nash Equilibrium problems in Banach spaces\thanks{Submitted to the editors DATE.
\funding{The work of Defeng Sun was supported in part by the Research Center for Intelligent Operations Research, RGC Senior Research Fellow Scheme No. SRFS2223-5S02 and GRF Project No. 15309625.}}}

\author{Defeng Sun\thanks{Corresponding Author. Department of Applied Mathematics, The Hong Kong Polytechnic University, Hung Hom, Kowloon, Hong Kong
  (\email{defeng.sun@polyu.edu.hk}).}
\and Hailing Wang\thanks{Department of Applied Mathematics, The Hong Kong Polytechnic University, Hung Hom, Kowloon, Hong Kong 
  (\email{hailing.wang@polyu.edu.hk}).}
\and Wei Zhao\thanks{Department of Mathematical Sciences, Tsinghua University, Beijing, China; Department of Applied Mathematics, The Hong Kong Polytechnic University, Hung Hom, Kowloon, Hong Kong
  (\email{zhaow22@mails.tsinghua.edu.cn, wei22.zhao@polyu.edu.hk}).}}

\usepackage{amsopn}
